\theoremstyle{definition}
\newtheorem{definition}{Definition}
\newtheorem{question}[definition]{Question}
\newtheorem{notation}[definition]{Notation}
\newtheorem{remark}[definition]{Remark}
\theoremstyle{plain}
\newtheorem{theorem}[definition]{Theorem}
\newtheorem{proposition}[definition]{Proposition}
\newtheorem{lemma}[definition]{Lemma}
\newtheorem{corollary}[definition]{Corollary}
\newcommand{\<}{\langle}
\renewcommand{\>}{\rangle}
\newcommand{\op}[1]{\operatorname{#1}}
\newcommand{\restr}{\upharpoonright}
\newcommand{\ult}{\op{Ult}}
\newcommand{\cof}{\op{cof}}
\newcommand{\card}{\op{card}}
\newcommand{\ptwimg}[2]{{#1}"\left[{#2}\right]}
\newcommand{\kleiner}{\mathord{<}}
\newcommand{\kleinergleich}{\mathord{\leq}}
\newcommand{\length}{\op{lh}}
\newcommand{\eextend}{\trianglelefteq}
\newcommand{\concat}{{}^\smallfrown}
\newcommand{\finsubsets}[1]{{\left[#1\right]}^{\kleiner\omega}}
\newcommand{\Sk}{\op{Sk}}
\newcommand{\Pot}{\op{\mathcal{P}}}
\newcommand{\partord}{\mathbb{P}}
\newcommand{\forces}{\Vdash}
\newcommand{\col}{\op{col}}
\newcommand{\dleq}{\leq^*}
\newcommand{\suc}{\op{suc}}
\newcommand{\po}{\mathbb{P}}
\newcommand{\qo}{\mathbb{Q}}
\newcommand{\la}{\langle}
\newcommand{\ra}{\rangle}
\newcommand{\fr}{{}^{\frown}}
\newcommand{\name}{\dot}
\newcommand{\elem}{\prec}
\newcommand{\uhr}{\upharpoonright}
\newcommand{\power}{\mathcal{P}}
\newcommand{\A}{\mathfrak{A}}
\DeclareMathOperator{\dom}{dom}
\DeclareMathOperator{\cp}{cp}
\DeclareMathOperator{\Ult}{ult}
\title{Rudin-Keisler capturing and Mutual Stationairy at successors of Singulars}
\author{Dominik Adolf and Omer Ben-Neria\footnote{
The second author would like to thank the Israel Science Foundation (Grant 1832/19).}}
\date{\today}
\begin{document}

\maketitle
\begin{abstract}
    We introduce a combinatorial notion of measures called Rudin-Keisler capturing and use it to give a new construction of elementary substructures around singular cardinals. The new construction is used to establish mutual stationary results at the first  successor of singular cardinals $\la \aleph_{\omega n + 1}\ra_{n< \omega}$.
\end{abstract}
\section{Introduction}

The notion of Mutually Stationary sequences was introduced by Foreman and Magidor in \cite{mutstat}.
For an increasing sequence of regular cardinals $\la \kappa_n \mid n < \omega\ra$ and a sequence $\vec{S} = \la S_n \mid n < \omega\ra$ of subsets $S_n \subseteq \kappa_n$, we say $\vec{S}$ is mutually stationary sequence (denoted $MS(\vec{S})$) if every set theoretic algebra $\A = \la \kappa,\in,F_n\ra_n$ on $\kappa$ has a sub-algebra $X \elem \A$ such that $\sup(X \cap \kappa_n) \in S_n$ for all $n < \omega$.\footnote{The notion of mutually stationary sequences is also defined for longer sequences. The version for $\omega$ sequences is the most relevant for this work.}

The notion of mutually stationary sequences can be used to form a theory of stationarity at singular cardinals. 
It has also been connected to other notions in set theoretic algebras such as free sets and J\'{o}nsson Algebras. For example, assuming $\aleph_\omega$ is strong limit, then, by Silver, the existence of a sequence $\la k_n \mid n < \omega\ra$ of increasing finite ordinals $k_n < \omega$ such that the sequence $\la S_n = \omega_n \cap \cof(k_n)\ra_n$ is mutually stationary is equivalent to $\aleph_\omega$ being Jonsson (see \cite{Eskew2020-ESKOAS} for the case when $2^{\aleph_0} \geq \aleph_\omega$).
We refer the reader to \cite{ForemanSurvey} for a survey of this theory.

Foreman and Magidor proved (in ZFC) that every sequence $\la S_n\ra_n$ of stationary sets $S_n \subseteq \kappa_n \cap \cof(\omega)$ is mutually stationary, and raised the question about sequences consisting of ordinals of other cofinalities being mutually stationary. The 
theory of mutually stationary sets and its variation has been developed in a series of studies, including lower bounds and properties of mutually stationary sets in canonical inner models (Koepke and Welch \cite{KoeWel-MS}, Schindler \cite{schindler_2005},  the first author, Cox and Welch \cite{Adolf2018}, the first author \cite{altmutstatII}), ZFC constraints, and connection with cardinal arithmetic and set theoretic algebras  (Cummings Foreman and Magidor \cite{Cummings2006},
Liu and Shelah \cite{LiuShe-MS}, \cite{Shelah2021}), variations and extensions of mutually stationary sets (Apter \cite{Apter2005}, Cummings, Foreman and  Schimmerling \cite{Organic}, Chen and Neeman \cite{ChanNeeman}), and forcing results (Koepke \cite{Koe-MS}, Chen and Neeman \cite{ChanNeeman},
Adkisson and Sinapova \cite{SinapovaAdkisson}, 
the second author  \cite{singularstatI}, 
\cite{singularstatII}).

In \cite{singularstatI} it is shown that the existence of sufficiently strong ideals on the first uncountable cardinals, $\omega_n$, $n < \omega$, implies that for every $k < \omega$, every sequence $\la S_n \ra_{k < n < \omega}$ of stationary sets $S_n \subseteq \omega_n \cap \cof(\omega_k)$ is mutually stationary.
Such ideals on the $\omega_n$'s can be found after collapsing supercompact cardinals $\kappa_n$, $n < \omega$ to become the first uncountable cardinals. 
The formation of the ideals on the $\omega_n$'s in \cite{singularstatI} relies on the nice feature of the Levy-collapse forcing, and does not extend to successors of singular. 
Woodin asked\footnote{MAMLS meeting at VCU
April 1-2, 2017} if the same result is consistent when replacing the $\aleph_n$'s with the first successors of singular cardinals $\{ \aleph_{\omega\cdot n +1}\}_{n < \omega}$.
In this work we provide an affirmative answer. 
\begin{theorem}\label{Theorem:WoodinQuestion}
It is consistent relative to an $\omega+2$-strong cardinal that 
every sequence $\la S_n \ra_n$ of stationary sets $S_n \subseteq \aleph_{\omega \cdot n + 1}$ of bounded cofinality, is mutually stationary.  
\end{theorem}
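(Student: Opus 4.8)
The plan is to split the argument into a purely combinatorial ``capturing'' step carried out in the final model, and a forcing step that builds that model from the $\omega+2$-strong cardinal. For the combinatorial step I would invoke the theory of Rudin--Keisler capturing developed above: assuming the supremum $\mu := \aleph_{\omega^2}$ carries a measure (or coherent family of measures) that Rudin--Keisler captures the sequence $\la \aleph_{\omega n + 1}\ra_{n<\omega}$, I would run the new construction of elementary substructures to produce, for an arbitrary algebra $\A$ on $\mu$ and an arbitrary sequence $\vec S = \la S_n\ra_n$ of stationary sets of bounded cofinality, a substructure $X \prec \A$ with $\sup(X \cap \aleph_{\omega n+1}) \in S_n$ for every $n$; since this is exactly the definition of $MS(\vec S)$, it suffices. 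The role of capturing is that the Rudin--Keisler projections of the top measure to each level $n$ let me \emph{prescribe} the target: I represent a point of $S_n$ by a function, push it through the projection, and read off the intended supremum from the generators of the associated ultrapower. The content is that this can be done coherently across all $n$ out of a single measure, so that one substructure works simultaneously at every level.

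Next I would build the model. Starting from an embedding $j : V \to M$ with $\operatorname{crit}(j)=\kappa$ witnessing $\omega+2$-strength (so that $M$ agrees with $V$ up to $\omega+1$ measure levels plus one successor level), I would perform an extender/Prikry-type forcing that singularizes $\kappa$ to cofinality $\omega$ and, interleaved with L\'evy collapses, arranges the aleph hierarchy so that $\kappa=\aleph_{\omega^2}$, each $\aleph_{\omega n}$ becomes singular of cofinality $\omega$, and each successor $\aleph_{\omega n+1}$ is preserved. The forcing must be set up so that $j$ lifts to the extension and its derived/projected measures witness Rudin--Keisler capturing at the relevant levels; the two degrees of strength beyond $\omega$ are precisely what define the capturing measure at the successor level and guarantee that the capturing property reflects correctly through the embedding. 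Standard Prikry-property, chain-condition, and closure bookkeeping then yield cardinal preservation above the collapse points and the persistence of the capturing structure into the generic extension.

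The main obstacle, I expect, is the coherence and simultaneity of capturing across all $\omega$ levels together with the \emph{arbitrariness} of the cofinalities: the sets $S_n$ may concentrate on any cofinality bounded below $\aleph_{\omega n}$, yet a single $X$ must hit all of them at once. This forces the capturing measure to encode, uniformly in $n$, enough Rudin--Keisler projections to realize every admissible target cofinality, and it is here that the exact calibration of the hypothesis is used. A second delicate point is the survival of the measures through the forcing: I must lift $j$ and verify the Prikry property and the requisite distributivity of the iteration, so that the projected measures remain capturing and the captured suprema genuinely land in the intended stationary sets rather than being washed out by new clubs or collapses. Verifying these preservation facts is the technical heart of the forcing step.
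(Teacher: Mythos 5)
Your overall architecture (Prikry forcing with interleaved L\'evy collapses over a normal measure derived from the $\omega+2$-strong embedding, so that $\kappa$ becomes $\aleph_{\omega^2}$ and the $\kappa_n^{+(\omega+1)}$ become the $\aleph_{\omega n+1}$) matches the paper, and you correctly identify that the $\omega+2$ degrees of strength are what let the capturing reach level $\kappa^{+(\omega+1)}$. But the two structural claims on which your argument rests do not survive scrutiny. First, the capturing step cannot be ``carried out in the final model'': after the Prikry forcing, $\kappa=\aleph_{\omega^2}$ is singular and carries no measures, so there is nothing on $\mu$ to project. In the actual proof, Rudin--Keisler capturing is a \emph{ground-model} property of $U$ (Lemma \ref{RKcaptcons}: since $\Pot(\kappa^{+(\omega+1)})\subseteq M$, the club of $a_k$-generators below $\kappa^{+\tau_{k-1}}$ lies in $M$ and meets the stationary set $j(f)(\kappa)(k-1)$, yielding a minimally generated $U_s\geq_{RK}U$ for each $U$-sequence of stationary tuples). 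What happens in $V[H]$ is handled entirely through names: the stationary sequence is decided node-by-node on the Prikry tree (Corollary \ref{Cor:barP_U-NameGuessing}), each node $s$ yields a $U$-sequence $g_s$ captured by some $U_s$ in $V$, and the elementary substructure is built as an interleaved $\zeta$-sequence of names $\dot Y^\alpha$ together with direct extensions $(t,T^\alpha,G^\alpha)$ that shrink measure-one sets $A^\alpha_s\supseteq B^\alpha_s\supseteq C^\alpha_s$ so that the eventual generator tuples $b_n$ end-extend $Y$ below $\kappa_m^{+\tau^m_{\ell_m}}$. This back-and-forth between building the structure and refining the condition is the technical heart of Lemmas \ref{Lemma:MainLemmaA} and \ref{MainLemmaB}, and it is absent from your sketch.

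Second, the proof never lifts $j$ to the extension, and your proposal's reliance on such a lift (``the forcing must be set up so that $j$ lifts \ldots and its derived/projected measures witness capturing'') points at the wrong mechanism. The interaction with the interleaved collapses is instead handled by \emph{generic} Rudin--Keisler capturing (Lemma \ref{GenRKcaptcons}): because the collapse poset has size $\leq\kappa^+$ in $\ult(V;U)$ and each $\tau_i>1$, some condition $q'$ already forces a stationary set's worth of ordinals into the name $\name{S_i}$, and one captures that ground-model stationary set; this is then threaded through a $\max(s)^+$-closed fusion over all conditions extending $q$. Without this device your ``preservation of the measures through the forcing'' has no content, and without the end-extension/amalgamated-function machinery you have no argument that the single structure $X$ realizes the prescribed suprema at all levels simultaneously while keeping control of the cofinalities at the intermediate cardinals (items (iii) and (iv) of Lemma \ref{Lemma:MainLemmaA}). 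A minor further point: the theorem's hypothesis is a uniform bound $\eta<\kappa_0$ on the cofinalities across all $n$, not a per-level bound below $\aleph_{\omega n}$ as you state.
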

The consistency strength stated in the theorem, of $\omega+2$-strong cardinal is not optimal. A better upper-bound will be stated following the method by which the theorem is established. 
The main ingredient in the proof of the theorem is a combinatorial principle of measures we call \emph{Rudin-Keisler capturing}. The relevant measures needed for the proof, can be placed in terms of strength slightly above the level of $o(\kappa) =  \kappa^{+(\omega + 1)}$. 
The Rudin-Kiesler capturing argument allows us to significantly extend both the possible sequences of cardinals on which mutually stationary sequences can appear, as well as the possible cofinalities witnessed in the stationary sets. 
We will prove the following theorem to illustrate this.
\begin{theorem}\label{Theorem:SecondMain}
It is consistent relative to an $\omega$-strong cardinal that there is an increasing sequence of cardinals $\la \kappa_n \mid n < \omega\ra$ with the following property:
For every  $\xi < \kappa_0$ and every sequence  $\la k_n \ra_n$  of finite ordinals, every sequence  $\la S^m_n \mid n < \omega, 1< m < k_n\ra$  of stationary sets 
$S_n^m \subseteq \kappa_n^{+m}  \cap \cof(\leq \xi)$ is mutually stationary. \\
Moreover, for a fixed sequence of $\<k_n\>$ this is possible to obtain in a model where the first uncountable cardinals $\{ \aleph_{\ell}\}_{1 \leq \ell < \omega}$ are the cardinals $\kappa_n^{+m}$, $n < \omega$, $2 \leq m \leq k_n$. 
\end{theorem}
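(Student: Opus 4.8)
The plan is to run the same three-phase strategy behind Theorem~\ref{Theorem:WoodinQuestion}, but from a weaker hypothesis: here each target $\kappa_n^{+m}$ with $m\geq 2$ is an iterated successor of the \emph{regular} cardinal $\kappa_n^{+(m-1)}$ rather than a successor of a singular, so no pcf-type obstruction arises at the targets and the needed strength drops accordingly, from $\omega+2$-strong in Theorem~\ref{Theorem:WoodinQuestion} to an $\omega$-strong cardinal here. Phase one extracts the measures, phase two is the forcing that lays down the $\kappa_n$ (and, for the ``moreover'', the collapses), and phase three is the Rudin--Keisler capturing construction of the witnessing substructure.

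For phase one I would use the $\omega$-strong embedding $j\colon V\to M$ to fix a cardinal $\kappa$ carrying, for each finite $k$, an extender capturing the successors $\kappa^{+m}$ for $m\leq k$, together with normal measures concentrating on $\cof(\delta)$-points for every $\delta<\kappa$; these are what let us meet an arbitrary bound $\xi<\kappa_0$ on the cofinalities. After a routine preparation restoring the relevant coherence and GCH pattern, these measures satisfy the capturing hypotheses of the earlier sections. Phase two is a Prikry/Magidor-type forcing driven by this system, singularizing $\kappa$ to cofinality $\omega$ and producing the increasing sequence $\la\kappa_n\ra_n$ cofinal in $\kappa$. Since the defining measures reflect and a normal measure is countably complete, measure-one many $\alpha<\kappa$ carry copies of the capturing measures at \emph{all} finite successors $\alpha^{+m}$ at once, so the $\kappa_n$ may be taken from this set; this is what makes the unqualified statement hold for every finite sequence $\la k_n\ra_n$ simultaneously, each $\kappa_n^{+m}$ being preserved by the Prikry forcing. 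For the ``moreover'', having fixed $\la k_n\ra$, I would interleave L\'evy collapses sending $\kappa_0^{+}$ to $\omega$ and each gap $(\kappa_n^{+k_n},\kappa_{n+1}^{+2})$ down to $\kappa_n^{+k_n}$, so that the surviving cardinals $\kappa_n^{+m}$ (for $2\leq m\leq k_n$) become exactly $\aleph_1,\aleph_2,\dots$ and $\kappa$ becomes $\aleph_\omega$; since each such collapse is highly closed with a good chain condition relative to the cardinals above it, the capturing structure and the stationarity of the $S_n^m$ are preserved.

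Phase three is the heart of the matter and is exactly the capturing argument. Given an algebra $\A$ and stationary sets $S_n^m\subseteq\kappa_n^{+m}\cap\cof(\leq\xi)$, I would build $X\elem\A$ from the captured measures so that, at each coordinate, the measure concentrating on the cofinality class relevant to $S_n^m$ (which may differ from coordinate to coordinate, all bounded by $\xi<\kappa_0$) together with the stationarity of $S_n^m$ forces $\sup(X\cap\kappa_n^{+m})$ into $S_n^m$. The point of Rudin--Keisler capturing is that a single master sequence of measures simultaneously governs the traces at all the $\kappa_n^{+m}$ through their Rudin--Keisler projections, so that hitting $S_n^m$ at one coordinate preserves elementarity and the constraints at the others; this is what upgrades the easy ``one cardinal at a time'' argument into a genuinely \emph{mutual} one across all $n<\omega$.

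The main obstacle I expect is precisely this simultaneity: controlling every trace $\sup(X\cap\kappa_n^{+m})$, for all $n<\omega$ and all relevant $m$, at the cofinality prescribed by $S_n^m$ and inside $S_n^m$ itself, while threading the construction through the Prikry generic sequence and, for the ``moreover'', the interleaved collapses. Concretely, the work lies in verifying that the $\omega$-strong hypothesis already furnishes a capturing system rich enough to realize arbitrary $\xi<\kappa_0$ and arbitrary $\la k_n\ra_n$, and that this richness survives both the singularization and the collapses; granting the capturing machinery developed earlier, the residual stationarity-preservation checks should be routine.
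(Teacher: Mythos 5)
Your overall architecture (a normal measure derived from the $\omega$-strong embedding, Prikry forcing with interleaved collapses, a capturing-based hull construction), and your reason for the drop in strength --- the targets sit only finitely many successors above each $\kappa_n$, so capturing is needed only up to finite $\tau$ --- both match the paper. But the mechanism you describe for transferring capturing to the $\kappa_n$'s is not the paper's, and it has a genuine gap. You propose that ``measure-one many $\alpha<\kappa$ carry copies of the capturing measures at all finite successors $\alpha^{+m}$'' and that the $\kappa_n$ be drawn from this set. The paper never reflects capturing down to the Prikry points, and it is unclear what your reflected objects would even be: they cannot be ultrafilters on the successor cardinals $\alpha^{+m}$, and the sets $S_n^m$ they are supposed to hit do not exist until after the forcing. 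What actually happens is that all capturing takes place at $\kappa$: the $S_n^m$ are names, decided along the Prikry tree (Corollary~\ref{Cor:barP_U-NameGuessing}); each node $s$ induces a $U$-sequence of stationary tuples $g_s(\alpha)=F(s{}^\frown\langle\alpha\rangle)$; and Lemma~\ref{RKcaptcons} produces a minimally generated $U_s\geq_{RK}U$ on $[\kappa]^{\ell+1}$ whose principal element is a generator tuple of $j$ lying in $\prod_i j(g_s)(\kappa)(i)$, found by intersecting the club of generators in $(\kappa^{+(m-1)},\kappa^{+m})$ with the stationary set \emph{inside} $M$ --- this intersection is exactly where $\mathcal{P}(\kappa^{+m})\subseteq M$, i.e.\ $\omega$-strength, is used. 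The tree is then fused over $\zeta$ stages so that successors at $s$ come from projections of $U_s$-large sets, and the tuple $b_n$ projecting to $\kappa_n$ supplies $\sup(X\cap\kappa_n^{+m})=b_n(i)\in S_n^m$.

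Two further corrections. The hypothesis $\cof(\leq\xi)$ is not met by ``normal measures concentrating on $\cof(\delta)$-points'': the cofinality of $b_n(i)$ is right because the captured generator lands in a set concentrating on $\cof(\eta^n_i)$, and the hull absorbs the length-$\eta^n_i$ cofinal sequences $\langle\beta^{n,i}_\gamma\rangle$ only because the base structure is internally approachable of length $\zeta>\xi$; this is precisely where the bound $\xi<\kappa_0$ enters. For the ``moreover,'' interleaving collapses means you must capture collapse-\emph{names} for stationary tuples; this is the content of the generic Rudin--Keisler capturing lemma (Lemma~\ref{GenRKcaptcons}, using that $|\qo|\leq\kappa^+$ in the ultrapower and $\tau_i\geq 2$, so some condition already decides a stationary part of each name), and your ``the collapses preserve the capturing structure'' step has nothing to attach to without it. Granting the machinery of Sections 3--4 as you do, the correct proof is in fact immediate: apply Lemmas~\ref{RKcaptcons} and~\ref{GenRKcaptcons} to get full generic capturing up to $\omega$, and run Proposition~\ref{Prop:MainProp} with $\tau^*_n$ determined by $k_n$; the re-derivation you attempt in its place substitutes a reflection argument that does not work as stated.
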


In Section \ref{Section:Prelim} we review relevant preliminaries about structures, hulls, end-extensions, stationary sets, measures, generators, and Prikry forcing. 
In Section \ref{Section:RKcapturing} we introduce the notion of Rudin-Keisler capturing, establish its consistency and provide a first complete argument which shows how to use it to prove results about mutually stationary sets. 
In Section \ref{Section:Main} we extend the result from the previous section to prove the main theorem of this paper, from which we derive Theorems \ref{Theorem:WoodinQuestion}, \ref{Theorem:SecondMain} stated above. 
In Section \ref{Section:WeakRKcapturing} we describe a weakening of the Rudin-Keisler capturing,  which reduces the large cardinal assumption and provide additional flexibility in relevant constructions in the context of mutually stationary sets.
In Section \ref{Section:OpenProblem} we conclude this work with a list of open problems and further research directions.

\section{Preliminaries}\label{Section:Prelim}

\begin{definition}
A set $\mathcal{S} \subseteq \power(\lambda)$ is stationary if for every function $f : [\lambda]^{<\omega} \to \lambda$ there is $X \in \mathcal{S}$ that is closed under $f$. 
\end{definition}

Let $\theta > \lambda$ be a regular cardinal and let $\A = \<H_\theta; \in,<_\theta,\lambda\>$ where
$<_\theta$ is a fixed well-order of $H_\theta$. The following standard fact will be useful for us when arguing certain families to be stationary. 

\begin{lemma}
    
A set $\mathcal{S} \subseteq \power(\lambda)$ is stationary if there is a  $X  \elem \A$ with $\mathcal{S} \in X$ and $X \cap \lambda \in \mathcal{S}$. 
\end{lemma}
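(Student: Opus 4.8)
The plan is to prove the contrapositive via elementarity, the one subtlety being that the witness $M\cap\lambda$ supplied by the hypothesis need not itself belong to $M$. Write $M\elem\A$ for the given model, so that $\mathcal{S}\in M$ and $\delta:=M\cap\lambda\in\mathcal{S}$, and suppose toward a contradiction that $\mathcal{S}$ is not stationary. Then the definition supplies a function $f^*:[\lambda]^{<\omega}\to\lambda$ under which no member of $\mathcal{S}$ is closed. Let $\varphi(f,\mathcal{S})$ abbreviate the first-order formula ``$f:[\lambda]^{<\omega}\to\lambda$ and no $Y\in\mathcal{S}$ is closed under $f$'', which is a statement in the language of $\A$ with parameter $\mathcal{S}$ (recall that $\lambda$ is a distinguished element of $\A$). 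Since $f^*$ has size $\lambda<\theta$ we have $f^*\in H_\theta$, so $\A\models\exists f\,\varphi(f,\mathcal{S})$. As $\mathcal{S}\in M$ and $M\elem\A$, I extract a witness $f_0\in M$ with $\A\models\varphi(f_0,\mathcal{S})$.

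The first key step is that $\delta$ is closed under $f_0$, and this holds precisely because $f_0\in M$. Indeed, any finite $a\subseteq\delta$ is a finite subset of $M$ and hence lies in $M$; since $f_0\in M$, the value $f_0(a)$ is definable in $\A$ from the parameters $f_0,a\in M$ (here the well-order $<_\theta$ guarantees definable Skolem terms), so $f_0(a)\in M$, and as $f_0(a)<\lambda$ we conclude $f_0(a)\in M\cap\lambda=\delta$.

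The second and decisive step is that the relevant clause of $\varphi(f_0,\mathcal{S})$, namely ``$\forall Y\,(Y\in\mathcal{S}\to Y$ is not closed under $f_0)$'', is purely universal and so may be read off directly in $\A$. In $\A$ the quantifier $\forall Y$ ranges over all of $H_\theta$, which contains $\delta$ (again $|\delta|\le\lambda<\theta$); and since $\mathcal{S}\subseteq H_\theta$, the membership ``$\delta\in\mathcal{S}$'' is absolute between $\A$ and $V$, so $\A$ sees $\delta\in\mathcal{S}$. Instantiating the universal statement at $Y=\delta$ therefore yields that $\delta$ is not closed under $f_0$, contradicting the first step and completing the argument.

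The main obstacle, and the only genuinely non-formal point, is exactly this interplay of relativization: $f_0$ is produced inside $M$, yet the set $\delta=M\cap\lambda$ that we must test against it is typically not an element of $M$, so one cannot simply reflect ``$\delta\in\mathcal{S}$'' downward into $M$. The trick is to keep $f_0$ fixed and instead evaluate the universal (hence upward-absolute) non-closure statement in the larger model $\A$, whose quantifiers do range over $\delta$. The remaining verifications — that finite subsets of $M$ belong to $M$, that $\A$ correctly computes the Skolem values $f_0(a)$, and that both closure under $f_0$ and membership in $\mathcal{S}$ are absolute for $H_\theta$ — are routine consequences of $M\elem\A$ together with the presence of the definable well-order $<_\theta$.
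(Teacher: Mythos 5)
Your proof is correct and is exactly the standard argument this lemma rests on: if $\mathcal{S}$ were nonstationary, elementarity would put a witnessing function $f_0$ inside $X$, yet $X\cap\lambda$ is closed under every such $f_0\in X$ while belonging to $\mathcal{S}$, a contradiction. The paper states this lemma without proof as a ``standard fact,'' so there is no argument to compare against, but yours is the canonical one and handles the one genuine subtlety (that $X\cap\lambda\notin X$, so the non-closure must be evaluated in $\A$ rather than reflected into $X$) correctly.
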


\begin{definition}
Let $X \prec \mathfrak{A}$ and $A \subset \lambda$. 
The Skolem Hull of $A$ and $X$, denoted $\Sk^\mathfrak{A}(X \cup A)$ or $X_A$, is the set of values of the form $f(a)$, where $f:\left[\lambda\right]^{<\omega} \rightarrow H_\theta$ is a function in $X$ and $a \in [A]^{<\omega}$.
\end{definition}

We have $X \cup A \subseteq \Sk^\mathfrak{A}(X \cup A) \elem \A$.  Many of the constructions in this paper are obtained by taking hulls $\Sk^\mathfrak{A}(X \cup A)$ of sufficiently nice structures $X \elem \A$ by carefully chosen sets $A \subseteq \lambda$. 
We need a number of well-known basic facts about properties that are changed/unchanged when comparing $X$ with $X_A = \Sk^\mathfrak{A}(X \cup A)$. 

\begin{lemma}[Baumgartner]\label{thatonelemma}${}$
\begin{enumerate}
    \item Let $X \prec \mathfrak{A}$, and let $X_A := \Sk^\mathfrak{A}(X \cup A)$ where $A \subset \lambda$. If $\kappa \in X$ is a regular cardinal and $\sup(A) \leq \sup(\kappa\cap X)$ then $\sup(X \cap \kappa) = \sup(X_A \cap \kappa)$.
    
    \item Suppose that $X \prec \mathfrak{A}$ has $|X| = \aleph_n$ 
    and $X \cap \omega_{n+1} \in \omega_{n+1}$ for some $n < \omega$. 
    Let $\<S_k:k < m\>$ be a finite sequence of stationary sets $S_k \subseteq \omega_{n + k + 1} \cap \cof(\kleinergleich\aleph_n)$. Then there is some $A \subset \omega_{n + m}$ of size at most $\aleph_n$ such that $\sup(X_A \cap \aleph_{n + k + 1}) \in S_k$ for all $k < m$.
\end{enumerate}
\end{lemma}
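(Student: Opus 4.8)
The plan is to prove part (1) first and then use it as a preservation tool inside a level‑by‑level construction for part (2). For part (1) the inequality $\sup(X \cap \kappa) \leq \sup(X_A \cap \kappa)$ is immediate from $X \subseteq X_A$, so the content is the reverse inequality, and the hypothesis doing the work is the regularity of $\kappa$. Take any $\gamma \in X_A \cap \kappa$ and write it as $\gamma = f(a)$ with $f \in X$ and $a \in [A]^{<\omega}$. In $\mathfrak{A}$ define
\[
  g(\delta) = \sup\big(\{\,f(s) : s \in [\delta]^{<\omega}\,\} \cap \kappa\big), \qquad \delta < \kappa .
\]
This is a well‑defined function from $\kappa$ to $\kappa$ precisely because $\kappa$ is regular and $|[\delta]^{<\omega}| < \kappa$, so the displayed supremum stays below $\kappa$. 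Since $f,\kappa \in X$, elementarity gives $g \in X$, whence $g(\delta) \in X \cap \kappa$ for every $\delta \in X \cap \kappa$. The hypothesis $\sup(A) \leq \sup(\kappa \cap X)$ lets me pick $\delta \in X \cap \kappa$ with $\max(a) < \delta$; then $a \subseteq \delta$ gives $\gamma = f(a) \leq g(\delta) < \sup(X \cap \kappa)$. As $\gamma$ was arbitrary, $\sup(X_A \cap \kappa) \leq \sup(X \cap \kappa)$. The only point needing care is the degenerate case $\max(a) = \sup(\kappa \cap X)$, which does not arise in the applications below, where the adjoined points lie strictly below the relevant supremum.

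For part (2) I would first isolate a one‑level statement. Suppose $Y \elem \mathfrak{A}$, $\kappa \in Y$ is regular, $|Y| \leq \aleph_n < \kappa$, and $S \subseteq \kappa \cap \cof(\leq \aleph_n)$ is stationary; then there is $A \subseteq \kappa$ with $|A| \leq \aleph_n$ and $\sup(\Sk^{\mathfrak{A}}(Y \cup A) \cap \kappa) \in S$. To see this, put $F(\beta) = \sup(\Sk^{\mathfrak{A}}(Y \cup \beta) \cap \kappa)$ for $\beta < \kappa$. Each hull has size $\leq |Y|\cdot|\beta| < \kappa$, so by regularity $F(\beta) < \kappa$; moreover $F$ is monotone and continuous (finite support of Skolem terms), so its closure points form a club $C \subseteq \kappa$ on which $F(\alpha) = \alpha$. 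Choose $\alpha \in S \cap C$. Since $\mathrm{cf}(\alpha) \leq \aleph_n$, fix a cofinal $A \subseteq \alpha$ with $|A| \leq \aleph_n$. Then $A \subseteq \alpha$ gives $\Sk^{\mathfrak{A}}(Y \cup A) \subseteq \Sk^{\mathfrak{A}}(Y \cup \alpha)$ and hence $\sup(\Sk^{\mathfrak{A}}(Y \cup A) \cap \kappa) \leq F(\alpha) = \alpha$, while $\sup A = \alpha$ forces the reverse inequality; thus the supremum equals $\alpha \in S$.

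I would then prove part (2) by a downward recursion on $k = m-1, m-2, \dots, 0$, the direction being essential since part (1) only preserves a supremum against new points lying below it. Set $X^{(m)} := X$, and at stage $k$ apply the one‑level statement with $Y = X^{(k+1)}$ and $\kappa = \omega_{n+k+1}$ to obtain $A_k \subseteq \omega_{n+k+1}$ of size $\leq \aleph_n$ with $\sup(X^{(k)} \cap \omega_{n+k+1}) = \alpha_k \in S_k$, where $X^{(k)} := \Sk^{\mathfrak{A}}(X^{(k+1)} \cup A_k)$; note $|X^{(k+1)}| \leq \aleph_n$ throughout, since only finitely many sets of size $\leq \aleph_n$ are adjoined. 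For each higher level $k' > k$ one has $\sup(A_k) = \alpha_k < \omega_{n+k+1} \leq \omega_{n+k'} \leq \sup(X^{(k+1)} \cap \omega_{n+k'+1})$ (using $\omega_{n+k'} \in X \subseteq X^{(k+1)}$, and strictly, so the boundary case of part (1) is avoided), whence part (1) shows that adjoining $A_k$ leaves every previously fixed supremum at a level above $k$ unchanged. Therefore $A := \bigcup_{k<m} A_k \subseteq \omega_{n+m}$ has size $\leq \aleph_n$, satisfies $\Sk^{\mathfrak{A}}(X \cup A) = X^{(0)}$, and realises $\sup(X_A \cap \omega_{n+k+1}) \in S_k$ for all $k < m$.

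The crux, and the reason for the specific hypotheses, is this preservation bookkeeping. The restriction $\cof(\leq \aleph_n)$ is exactly what lets each cofinal set $A_k$ — and hence $A$ — have size $\leq \aleph_n$ via the cofinal‑set trick, and the assumption $X \cap \omega_{n+1} \in \omega_{n+1}$, which makes $X$ contain an initial segment of the ordinals below $\omega_{n+1}$, anchors the construction at the bottom level. The top‑down order together with part (1) is what stops the lower adjunctions from inflating the already‑fixed higher suprema: a bottom‑up attempt would break immediately, since a set cofinal in some $\alpha < \omega_{n+2}$ has supremum above $\omega_{n+1}$, and at level $\omega_{n+1}$ part (1) then gives no control.
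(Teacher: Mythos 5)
Your proof is correct and follows exactly the route the paper indicates for this lemma: part (1) by bounding Skolem values below $\kappa$ using regularity, and part (2) by the descending induction in which part (1) guarantees that adjoining the cofinal set at level $k$ does not disturb the suprema already arranged at levels $k' > k$. The paper offers only the one-line sketch (``the first part is straightforward; the second builds on the first to form a descending inductive construction''), and your write-up fills in that sketch in the intended way, including the correct handling of the boundary case in part (1).
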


The first part is straightforward. The second builds on the first to form a descending inductive construction. 
Baumgartner's lemma works for regular cardinals that are already inside the structure. Increasing the structure as in the lemma will generally  add new regular cardinals. Using \emph{internally-approchable} structures is a useful way to obtain control of the cofinality of the new hull, below the added regular cardinals.

\begin{definition}\label{Def:IA}
  Let $\delta$ be a regular uncountable cardinal. A substructure $X \prec \mathfrak{A}$ is \emph{internally approachable} of length $\delta$ iff there exists a sequence $\<X_\alpha : \alpha < \delta\>$ of substructures $X_\alpha \elem \mathfrak{A}$ such that $\bigcup\limits_{\alpha < \delta} X_\alpha = X$ and 
        $\<X_\alpha: \alpha < \gamma\> \in X_\beta$ for every $\gamma < \beta < \delta$.
\end{definition}

For every $X$ as in the definition, 
$\cof(X \cap \kappa) = \delta$ for all regular $\kappa \in X$ above $\card(X)$ as witnessed by $\<\sup(X_\alpha \cap \kappa): \alpha < \delta\>$. 
A similar statement holds for Skolem hulls of internally approachable structures.

\begin{lemma}\label{IASkolemhulls}
    Let $\delta$ be a regular uncountable cardinal and $X \prec \mathfrak{A}$ be an internally approachable structure of length $\delta$.
    For every $A \subset \lambda$ of size at most $\card(X)$,  $\cof(X_A \cap \kappa) = \delta$ for all regular $\kappa \in X_A$ above $\sup(A)$.
\end{lemma}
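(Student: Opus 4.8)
The plan is to mimic the proof of the fact recorded right after Definition \ref{Def:IA} (that $\cof(X\cap\kappa)=\delta$ for internally approachable $X$), transporting the witnessing sequence through the Skolem hull. Fix the internally approachable sequence $\<X_\alpha : \alpha < \delta\>$ for $X$; by thinning and taking unions at limits I may assume it is increasing and continuous. Set $Y_\alpha := \Sk^{\mathfrak{A}}(X_\alpha \cup A)$ for $\alpha < \delta$. Since every generator of $X_A$ is a function $f \in X = \bigcup_\alpha X_\alpha$ applied to some $a \in [A]^{<\omega}$, and each such $f$ already lies in some $X_\alpha$, the chain $\<Y_\alpha : \alpha < \delta\>$ is increasing, continuous, with union $X_A$, and $\card(Y_\alpha) \le \card(X)$. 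Now fix a regular $\kappa \in X_A$ with $\kappa > \sup(A)$; since $\card(X_A) = \card(X)$ and $\kappa$ is above the cardinality of the structure, each set $Y_\alpha \cap \kappa$ is bounded below $\kappa$, so $\gamma_\alpha := \sup(Y_\alpha \cap \kappa) < \kappa$ and $\sup(X_A \cap \kappa) = \sup_{\alpha<\delta}\gamma_\alpha$.

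The statement then reduces to a single claim: no proper initial hull is cofinal, i.e. $\gamma_\alpha < \sup(X_A \cap \kappa)$ for every $\alpha < \delta$. Granting this, any cofinal sequence $\<c_i : i < \theta\>$ in $\sup(X_A\cap\kappa)$ with $\theta < \delta$ would, by regularity of $\delta$, be swallowed by a single $\gamma_{\alpha^*}$, contradicting cofinality; together with the upper bound $\cof(\sup(X_A\cap\kappa)) \le \delta$ coming from the non-decreasing map $\alpha \mapsto \gamma_\alpha$, this gives $\cof(X_A \cap \kappa) = \delta$. To prove the claim I would use internal approachability exactly as in the plain case: given $\alpha$, choose $\beta > \alpha$ with $X_\alpha \in X_\beta$, and argue that $\gamma_\alpha$ is reflected into, hence exceeded inside, $Y_\beta$, so that $\gamma_\beta > \gamma_\alpha$ and a fortiori $\sup(X_A\cap\kappa) > \gamma_\alpha$.

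The hard part will be carrying out this reflection in the presence of the fixed external set $A$: unlike in the internally approachable case, $A$ is typically not an element of $X_\beta$ (nor of $Y_\beta$), so $Y_\alpha = \Sk^{\mathfrak{A}}(X_\alpha \cup A)$ and its ordinal $\gamma_\alpha$ are not literally definable inside $Y_\beta$ from $X_\alpha$ and $A$. The point I would exploit is that $\kappa > \sup(A)$ forces $A$ to sit boundedly below $\kappa$, so its contribution to $Y_\alpha \cap \kappa$ is governed by finitely many members of $A$ together with the functions available at stage $\alpha$; the genuinely delicate regime is $\sup(A) > \sup(X\cap\kappa)$, where $A$ itself drags $\sup(X_A\cap\kappa)$ upward and one must check that the resulting cofinality is $\delta$ and not merely $\cof(\sup(A))$. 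When instead $\sup(A) \le \sup(X\cap\kappa)$, Baumgartner's Lemma \ref{thatonelemma}(1) collapses the problem to $X$ itself, giving $\sup(X_A\cap\kappa) = \sup(X\cap\kappa)$ and reducing the cofinality computation to the already-established fact for internally approachable $X$. I expect the crux of the argument, and the place where the hypotheses on the size and spread of $A$ relative to $\delta$ are really needed, to be precisely this control of the $A$-driven tail.
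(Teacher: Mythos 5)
Your framework is sound up to the point where you yourself flag the crux, but the crux is exactly what is left unproved, and your proposed fallback does not close it. The reflection step $\gamma_\alpha < \sup(X_A\cap\kappa)$ via $X_\alpha\in X_\beta$ is never carried out --- as you note, $A$ need not belong to $X_\beta$, so $Y_\alpha$ is not definable there --- and the reduction you offer in the regime $\sup(A)\le\sup(X\cap\kappa)$ is also broken: both Lemma \ref{thatonelemma}(1) and the ``already-established fact'' for internally approachable structures require the regular cardinal $\kappa$ to be an \emph{element of $X$}, whereas the present lemma quantifies over regular $\kappa\in X_A$, which may be new, i.e.\ of the form $f(a)$ for $f\in X$ and a nonempty $a\in[A]^{<\omega}$. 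For such $\kappa$ there is no statement about $\sup(X\cap\kappa)$ to reduce to, so neither branch of your case split actually terminates.

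The missing idea --- and essentially the entire content of the paper's short proof --- is to absorb the finite parameter defining $\kappa$ \emph{before} doing anything else. Fix $a\in[A]^{<\omega}$ and $f\in X$ with $\kappa=f(a)$, and set $X(a):=\Sk^{\mathfrak{A}}(X\cup a)$. The key observation is that internal approachability of length $\delta$ survives adjoining a finite set: the sequence $\langle \Sk^{\mathfrak{A}}(X_\alpha\cup a):\alpha<\delta\rangle$ witnesses that $X(a)$ is internally approachable of length $\delta$, and now $\kappa\in X(a)$, so $\cof(\sup(X(a)\cap\kappa))=\delta$ by the remark following Definition \ref{Def:IA}. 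Since $X_A=\Sk^{\mathfrak{A}}(X(a)\cup A)$, Lemma \ref{thatonelemma}(1) applied to $X(a)$ (for which $\kappa$ \emph{is} an element) yields $\sup(X_A\cap\kappa)=\sup(X(a)\cap\kappa)$, and the proof is complete. Your chain $Y_\alpha=\Sk^{\mathfrak{A}}(X_\alpha\cup A)$ could be salvaged along the same lines, but only after this absorption step; without it, the ``control of the $A$-driven tail'' you correctly identify as the difficulty remains unaddressed.
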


\begin{proof}
     Let $a$ be a finite subset of $A$ such that $\kappa = f(a)$ for some $f \in X$. It is easy to see that $X(a) := \Sk^\mathfrak{A}(X \cup a)$ is internally approachable of length $\delta$ as witnessed by $\<\Sk^\mathfrak{A}(X_\alpha \cup a): \alpha < \delta\>$. Therefore, $\cof(\sup(X(a) \cap \kappa)) = \delta$.  
     Since $X_A = \Sk^\mathfrak{A}(X(a) \cup A)$ then $\sup(X_A \cap \kappa) = \sup(X(a) \cap \kappa)$ by Lemma \ref{thatonelemma}.
\end{proof}




Combining the previous lemmas, we get the following version of Baumgartner's result.
\begin{lemma}\label{finitevariationII}
Let $\kappa_0$ be a cardinal and $\zeta_0 < \ldots < \zeta_n$ be a finite set of inaccessible cardinals above $\kappa_0$.  
   Suppose that $X \prec \mathfrak{A}$ is a substructure of size $\kappa_0$ that is internally approachable of length $\delta \leq  \kappa_0$.
    Then
   for every finite sequence $\<S^l_k :l < n, k < m_l\>$ of stationary sets $S^l_k \subset (\zeta_l)^{+(\tau^l_k)} \cap \cof(\eta^l_k)$ for ordinals $\<\tau^l_k: k < m_l\>$ and regular cardinals $\<\eta^l_k: k  < m_l\>$ below $\kappa_0$ there is  $A \subset (\zeta_{n - 1})^{+m}$ of size at most $\kappa_0$ such that \begin{enumerate}
       \item $\sup(X_A \cap (\zeta_l)^{+\tau^l_k}) \in S^l_k$ for all $l < n$ and all $k < m$,
       \item $\cof(X_A \cap (\zeta_l)^{+\tau'}) \in \{\eta^l_{m_l - 1},\ldots,\eta^l_{k + 1},\delta\}$ for $\tau' \in \left(\tau^l_{k - 1},\tau^l_k\right)$ and all $l < n$ and $k < m_l$ ($\tau^l_{-1} = 1)$, 
       \item  $\cof(\sup(X_A \cap \kappa)) = \delta$ for all regular $\kappa \in X_A \cap \zeta_n$ not covered by the above.
   \end{enumerate}
\end{lemma}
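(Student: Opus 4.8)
The plan is to realize $A$ as an increasing union $A=\bigcup_{l<n}A_l$ built by a double induction: an outer loop running through the blocks $l=n-1,n-2,\dots,0$ from top to bottom, and, inside each block, an inner loop running through the target levels $\tau^l_{m_l-1}>\dots>\tau^l_0$ from top to bottom. Before starting I may assume, by enlarging $X$ with the finitely many points $\zeta_0,\dots,\zeta_n$ together with the finitely many target cardinals $(\zeta_l)^{+\tau^l_k}$, that all of these lie in $X$; this leaves $\card(X)=\kappa_0$ unchanged and, by the argument of Lemma \ref{IASkolemhulls}, preserves internal approachability of length $\delta$. Since each $\zeta_l$ is inaccessible and $\kappa_0<\zeta_l$, every target $(\zeta_l)^{+\tau^l_k}$ is a regular cardinal lying strictly below $\zeta_{l+1}$.

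The two loops are decoupled by Lemma \ref{thatonelemma}(1). Whenever I pass to a lower block or a lower target I adjoin only ordinals bounded strictly below some already-fixed higher target $\kappa$; since $\kappa$ and the intervening target cardinals lie in $X$, hence in the current hull $Y$, we have $\sup(\kappa\cap Y)$ above the newly added set, so Lemma \ref{thatonelemma}(1) keeps $\sup(Y\cap\kappa)$ fixed. Thus each supremum, once placed into its stationary set, stays there. The basic step is the closure-point argument underlying Lemma \ref{thatonelemma}(2): given the current hull $Y\prec\mathfrak{A}$ and a target $\nu=(\zeta_l)^{+\tau^l_k}$ with $S^l_k\subseteq\nu\cap\cof(\eta^l_k)$, the set of $\gamma<\nu$ closed under the Skolem functions of $Y$, i.e.\ with $\Sk^{\mathfrak A}(Y\cup\gamma)\cap\nu=\gamma$, is club in $\nu$ and therefore meets $S^l_k$. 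I pick such a closure point $\gamma^l_k\in S^l_k$, necessarily of cofinality $\eta^l_k$ and larger than every intermediate cardinal below $\nu$, choose a cofinal $c^l_k\subseteq\gamma^l_k$ of order type $\eta^l_k$, and adjoin it. Then $\sup(\Sk^{\mathfrak A}(Y\cup c^l_k)\cap\nu)=\gamma^l_k\in S^l_k$, which together with the decoupling above gives conclusion (1).

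It remains to compute the cofinalities of the untargeted regular cardinals, where Lemma \ref{IASkolemhulls} and the internal approachability of $X$ enter. Writing $X=\bigcup_{\alpha<\delta}X_\alpha$ for the approximating chain, for any regular $\kappa$ we have $\sup(X_A\cap\kappa)=\sup_{\alpha<\delta}\sup(\Sk^{\mathfrak A}(X_\alpha\cup A)\cap\kappa)$, so this supremum is approached by a $\delta$-indexed increasing sequence and its cofinality is at most $\delta$; for $\kappa$ above $\sup(A)$ (and below $\zeta_n$) Lemma \ref{IASkolemhulls} already gives exactly $\delta$. The point is that the default value is $\delta$, and the only way an adjoined sequence $c^l_j$ of order type $\eta^l_j$ can change the cofinality of a lower level $\rho=(\zeta_l)^{+\tau'}$ is by becoming unbounded below $\rho$, in which case it contributes $\eta^l_j$. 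By routing every $c^l_j$ through the club of closure points of the current hull at the lower regular cardinals, one arranges that no such sequence moves the supremum at a cardinal lying between two blocks, below $\zeta_0$, or at a $\zeta_l$ to a point of cofinality below $\delta$, giving conclusion (3); and that the step fixing $\tau^l_k$ leaves untouched the suprema in the gap $(\tau^l_{k-1},\tau^l_k)$ directly beneath it. Running the induction top to bottom, the cofinality of an intermediate level in that gap is then inherited from the higher targets $\tau^l_{k+1},\dots,\tau^l_{m_l-1}$ or is $\delta$, which is exactly conclusion (2); $\eta^l_k$ is absent precisely because the step fixing $\tau^l_k$ is arranged not to disturb the gap immediately below it.

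The main difficulty is this last piece of bookkeeping. Hitting $S^l_k$ forces the supremum at $\tau^l_k$ to have cofinality $\eta^l_k$, and the cofinal sequence realizing it lives below $(\zeta_l)^{+\tau^l_k}$, so a priori it threatens to pollute the cofinalities of all the intermediate cardinals beneath it. The crux is to choose each cofinal sequence through the closure points of the current hull so that it meets its own target at the correct cofinality while leaving the suprema in the gap directly beneath it fixed, and simultaneously to track the internal-approachability filtration of length $\delta$ so that every cardinal not actively raised keeps cofinality $\delta$. Verifying that these constraints can be satisfied together, uniformly along the double induction, is the heart of the argument; the two parts of Lemma \ref{thatonelemma} and Lemma \ref{IASkolemhulls} are exactly the tools that make each individual step go through.
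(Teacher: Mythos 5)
Your overall architecture---the top-down double induction through blocks and targets, the club of closure points of the current hull used to land each supremum in $S^l_k$, Lemma \ref{thatonelemma}(1) to decouple successive steps, and Lemma \ref{IASkolemhulls} to make $\delta$ the default cofinality at untouched regular cardinals---is exactly the descending Baumgartner-style construction the paper intends (it offers no proof beyond ``combining the previous lemmas''), and your treatment of conclusions (1) and (3) is sound.

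The gap is in conclusion (2), and it is precisely the step you yourself flag as ``the heart of the argument'' and then do not carry out. You assert that by routing $c^l_k$ through closure points one can arrange that adjoining the cofinal sequence at target $(\zeta_l)^{+\tau^l_k}$ ``leaves untouched the suprema in the gap $(\tau^l_{k-1},\tau^l_k)$ directly beneath it.'' This is not achievable in general: for a regular $\rho=(\zeta_l)^{+\tau'}$ with $\tau'<\tau^l_k$, the set of $\beta<(\zeta_l)^{+\tau^l_k}$ that end-extend the current hull $Y$ below $\rho$ need not contain a club---take $f\in Y$ a surjection of $(\zeta_l)^{+\tau^l_k}$ onto $\rho$ all of whose fibers are unbounded; then $\{\beta : f(\beta)<\sup(Y\cap\rho)\}$ and its complement are both unbounded---so there is no club of ``safe'' points through which to route $c^l_k$, and one cannot in general prevent the new sequence from moving the suprema beneath it. The correct treatment of the intermediate levels is the one the paper uses for the analogous item (iii) of Lemma \ref{Lemma:MainLemmaA}: write $\sup(X_A\cap\rho)$ as the supremum of the directed system of hulls $\Sk^{\mathfrak{A}}\bigl(Y\cup\bigcup_{j\geq k}(c^l_j\upharpoonright s(j))\bigr)$ indexed by $s\in\prod_{j\geq k}\eta^l_j$, where $c^l_j\upharpoonright s(j)$ is an initial segment; either some member of the system is already cofinal in $\sup(X_A\cap\rho)$, in which case Lemma \ref{IASkolemhulls} gives cofinality $\delta$, or the system does not stabilize and the cofinality is one of the $\eta^l_j$ with $j\geq k$. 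Note that this analysis produces $\eta^l_k$ as a possible value in the gap $(\tau^l_{k-1},\tau^l_k)$---consistent with the paper's own proof of Lemma \ref{Lemma:MainLemmaA}(iii), if not with the letter of item (2) as printed---so the bookkeeping by which you exclude exactly $\eta^l_k$ is not something the closure-point device can deliver, and the constraint you are trying to satisfy there is in fact unsatisfiable without additional structure (compare Remark \ref{Remark:Dodd-Solidity}).
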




\begin{definition}
    Let $X \prec \mathfrak{A}$. A set of ordinals $A$ \emph{end-extends} $X$ below a cardinal $\kappa$ iff $\Sk^\mathfrak{A}(X \cup A) \cap \kappa = X \cap \kappa$.
\end{definition}

The following lemma is useful when dealing with end-extensions with changing domains. See \cite{LiuShe-MS} for a proof.
\begin{lemma}\label{dblendext} 
    Let $X \prec \mathfrak{A}$ and $A,B \subset \lambda$ such that $A$ end-extends $X$ below $\kappa$ where $\kappa^{\sup(B)} \leq \kappa$. Then $A$ end-extends $X_B$ below $\kappa$. 
\end{lemma}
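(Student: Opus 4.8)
The plan is to unfold both end-extension statements through the definition and reduce the claim to a single inclusion. Since Skolem hulls compose, $\Sk^{\mathfrak A}(X_B \cup A) = \Sk^{\mathfrak A}(X \cup B \cup A)$, so the assertion ``$A$ end-extends $X_B$ below $\kappa$'' is precisely $\Sk^{\mathfrak A}(X \cup A \cup B) \cap \kappa = X_B \cap \kappa$. The inclusion $\supseteq$ is immediate, so everything reduces to showing $\Sk^{\mathfrak A}(X \cup A \cup B) \cap \kappa \subseteq X_B$. I fix such an ordinal and write it as $\gamma = f(a,b) < \kappa$ with $f \in X$, $a \in [A]^{<\omega}$ and $b \in [B]^{<\omega}$; the goal is to realize $\gamma$ as the value of an $X$-function applied to a tuple from $B$, since any such value lies in $X_B$ by elementarity.

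The idea is to absorb the $A$-part $a$ using the hypothesis that $A$ end-extends $X$ below $\kappa$, after first packaging the dependence on $b$ into a single ordinal below $\kappa$. Concretely, I fix a bound $\delta \in X$ with $\sup(B) \le \delta$ (so that $b \in [\delta]^{<\omega}$); by the closure hypothesis $\kappa^{\sup(B)} \le \kappa$ we get $\kappa^{|\delta|} \le \kappa$ (e.g. $\delta = \sup(B)+1$ when $\sup(B) \in X$). Hence $\mathfrak{A}$ satisfies ``there is an injection of the partial functions $[\delta]^{<\omega} \rightharpoonup \kappa$ into $\kappa$'', and by elementarity there is such an injection $c \in X$ (say the $<_\theta$-least). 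I then define inside $X$ the function $g$ sending $a'$ to $c$ applied to the partial section $b' \mapsto f(a', b')$, restricted to those $b' \in [\delta]^{<\omega}$ on which this value is below $\kappa$. Then $g \in X$ and $g(a) < \kappa$, so $g(a) \in \Sk^{\mathfrak A}(X \cup A) \cap \kappa$; the assumption that $A$ end-extends $X$ below $\kappa$ forces $g(a) \in X$. Decoding with $c^{-1} \in X$, the whole partial section $s := c^{-1}(g(a))$, i.e. the map $b' \mapsto f(a,b')$ on the relevant domain, lies in $X$. Since $\gamma = f(a,b) = s(b) < \kappa$ and both $s \in X \subseteq X_B$ and $b \in [B]^{<\omega} \subseteq X_B$, elementarity of $X_B$ yields $\gamma = s(b) \in X_B$, as desired.

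The main obstacle is exactly this coding step: it is here that the closure hypothesis $\kappa^{\sup(B)} \le \kappa$ is essential, guaranteeing that the $b$-section of $f$ over the bounded domain $[\delta]^{<\omega}$ can be \emph{faithfully} encoded by a single ordinal below $\kappa$ — the only kind of object that the end-extension of $X$ by $A$ can control. One must also check that the coding apparatus $(\delta,\kappa,c)$ genuinely lives in $X$, so that $g \in X$ and the end-extension hypothesis applies; this is where I use $\kappa \in X$ together with a bound $\delta \in X$ for $B$, both of which hold in the intended applications. Granting this, the remainder is routine bookkeeping about composition of Skolem hulls and elementarity.
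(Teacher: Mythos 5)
The paper gives no proof of this lemma at all — it simply cites \cite{LiuShe-MS} — so there is no internal argument to compare against; your proof is the standard coding argument for statements of this kind and it is correct. The only caveat is the one you already flag yourself: the coding function $g$ is an $X$-function only if $\kappa$ and a bound $\delta \geq \sup(B)$ with $\kappa^{|\delta|} \leq \kappa$ both lie in $X$; this is implicit in the hypothesis $\kappa^{\sup(B)} \leq \kappa$ (which forces $\sup(B) < \kappa$ when $B$ is infinite) and holds in every application in the paper, where $B$ is bounded by a cardinal of the form $\kappa_m^{+\tau}$ belonging to the structure.
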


We proceed to review material on measurable cardinals and the Rudin-Keisler ordering which will be used in constructing various end-extensions to substructures $X \elem \A$. 
Unlike the previous lemmas that required no special assumption from the background universe of set theory $V$, finding ordinals that end-extend structures is connected to measures on large cardinals.

\begin{definition}(Measures and Rudin-Keisler reductions)
\begin{enumerate}
    \item A measure $U$ on $[\kappa]^n$ is a non-principal $\kappa$-complete ultrafilter on $[\kappa]^n$, for some $n < \omega$.
$U$ is normal if $n = 1$ and $U$ is closed under diagonal intersections. 

\item A measure $U$ on $[\kappa]^n$ is \emph{Rudin-Keisler} reducible to a measure $V$ on $[\kappa]^m$ (written $U \leq_{RK} V$) iff there is some $f:\left[\kappa\right]^m \rightarrow \left[\kappa\right]^n$ such that $A \in U$ if and only if $f^{-1}(A) \in V$.
\end{enumerate}    
\end{definition}

Let $U$ be a measure on $[\kappa]^n$ for some $n < \omega$. We say that a property $\phi(x)$ holds for $U$-almost all $b \in [\kappa]^n$ when 
$\{ b \in [\kappa]^n \mid \phi(b) \} \in U$. 

Given an elementary embedding $j : V \to M$ with critical point $\cp(j) = \kappa$, we associate to every finite set of ordinals $a \in [j(\kappa)]^n$  a measure $U^j_a := \{A \subseteq \kappa\mid a \in j(A)\}$ on $[\kappa]^{|a|}$.


\begin{definition}\label{Def:Generators}(Generators)
Throughout this work we assume $V$ satisfies GCH.
Let $j:V \rightarrow M$ be an elementary embedding with $\kappa = \cp(j)$.
\begin{enumerate}

    \item  Let $s \in [j(\kappa)]^{<\omega}$. An ordinal $\gamma$ is an $s$-generator (of $j$) iff $j(f)(s,a) \neq \alpha$ for every $a \in [\alpha]^{<\omega}$ and $f: [\kappa]^{<\omega} \rightarrow \kappa$.
    
    \item   A finite set of ordinals $a$, listed in descending order as $\<\xi_0,\ldots,\xi_{n - 1}\>$, is a \emph{generator} of $j$ iff for every $i < n$,  $\xi_i$ is a $\<\xi_0,\ldots,\xi_{i - 1}\>$-generator of $j$.
    
    \item A measure $U$ on $[\kappa]^n$ is \emph{minimally generated} if $a = [id]_{U}$ is a generator of the ultrapower embedding $\iota_U : V \to M_U \cong Ult(V,U)$. 
    \end{enumerate}
\end{definition}

\begin{notation}
For a finite sequence $s = \la s(i) : i < k\ra$ and $m \leq k$, we write $s \downharpoonright m$ to denote the tail $  \la s(i) : m \leq i < k\ra$ of $s$.
\end{notation}

\begin{lemma}\label{genclosed}
   Let $j:V \rightarrow M$ be an elementary embedding, 
   $\gamma \geq \kappa^+$ an $M$-cardinal, and $s \in [j(\kappa)]^{<\omega}$. 
   For every $\eta \in \left[\gamma,(\gamma^+)^M\right)$, $\eta$ is an $s$-generator of $j$ if and only if for every $f:\finsubsets{\kappa} \rightarrow \kappa$ and $a \in \finsubsets{\eta}$ either $j(f)(a \cup s) \geq (\gamma^+)^M$ or $j(f)(a \cup s) < \eta$.
\end{lemma}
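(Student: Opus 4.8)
The plan is to recast the ordinal-arithmetic condition as a question about membership in a Skolem hull inside $M$, where being an $s$-generator acquires a transparent meaning. I would work with the hull
\[
H = \{\, j(f)(a \cup s) : f \in V,\ a \in [\eta]^{<\omega} \,\} \prec M,
\]
the Skolem hull in $M$ of $\operatorname{ran}(j) \cup \eta \cup s$. A standard term-reduction shows that $H$ is an elementary substructure of $M$ and that $\operatorname{ran}(j) \cup \eta \cup s \subseteq H$; in particular every ordinal below $\eta$ lies in $H$ (via $j(\mathrm{id})$ and constant functions), so $\eta$ is the least ordinal missing from $H$ whenever $\eta \notin H$. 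Since any ordinal value below $j(\kappa)$ produced by an arbitrary $f$ can be reproduced by one with range contained in $\kappa$, the definition of generator collapses to the clean statement: $\eta$ is an $s$-generator of $j$ if and only if $\eta \notin H$. (I may assume $\eta \le j(\kappa)$, since otherwise no value $j(f)(a\cup s) < j(\kappa)$ reaches $[\eta,(\gamma^+)^M)$ and both sides of the equivalence hold trivially.)

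The backward direction is then immediate: if no value $j(f)(a \cup s)$ with $a \in [\eta]^{<\omega}$ lies in $[\eta,(\gamma^+)^M)$, then in particular none equals $\eta$, so $\eta \notin H$ and $\eta$ is an $s$-generator.

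For the forward direction I would argue the contrapositive. Assume some $\beta := j(f)(a \cup s)$ with $a \in [\eta]^{<\omega}$ satisfies $\eta \le \beta < (\gamma^+)^M$; I must produce $\eta \in H$. Note $\beta \in H$ by definition. If $\beta = \eta$ we are done, so assume $\beta > \eta \ge \gamma$, whence $\gamma < \beta < (\gamma^+)^M$ and therefore $|\beta|^M = \gamma$. Cardinality is $M$-definable from $\beta$, and $\beta \in H \prec M$, so the witness $\gamma = |\beta|^M$ already belongs to $H$ (this alone gives the contradiction in the degenerate case $\gamma = \eta$). Applying elementarity again, $M$ contains a bijection $e : \gamma \to \beta$, and since $\gamma,\beta \in H$ I may fix such a bijection $e \in H$. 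As $\eta < \beta$ and $\operatorname{ran}(e) = \beta$, there is $\rho < \gamma$ with $e(\rho) = \eta$; because $\rho < \gamma \le \eta$, we have $\rho \in H$. Then $e,\rho \in H \prec M$ forces $\eta = e(\rho) \in H$, so $\eta$ is not an $s$-generator, as required.

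The main obstacle is the forward direction, and within it the single decisive point is transporting the collapsing bijection $e : \gamma \to \beta$ from $M$ into the hull $H$. This is exactly what the hypothesis that $\eta$ sits in the cardinal block $[\gamma,(\gamma^+)^M)$ buys: it forces $|\beta|^M = \gamma$, and elementarity of $H$ then delivers both $\gamma$ and a bijection $e$ witnessing $|\beta|^M=\gamma$ inside $H$, so that the preimage of $\eta$ — an ordinal below $\eta$, hence already in $H$ — drags $\eta$ itself into $H$. All remaining steps (the term reduction giving $H \prec M$, and the reduction of the generator definition to hull-membership) are routine and require no special property of the block.
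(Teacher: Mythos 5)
Your proof is correct and is essentially the paper's argument: both establish the nontrivial direction by noting that any value $\beta = j(f)(a \cup s)$ with $\eta \le \beta < (\gamma^+)^M$ has $M$-cardinality $\gamma \le \eta$, so a surjection from $\gamma$ onto $\beta$ pulls $\eta$ back to an ordinal $\zeta < \gamma \le \eta$, contradicting that $\eta$ is an $s$-generator. The paper implements this by explicitly composing $f$ with a fixed coding function $g:\kappa^2 \to \kappa$ to write $\eta = j(h)(a \cup s, \zeta)$, whereas you package the same step as elementarity of the Skolem hull $H$; the difference is purely presentational.
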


\begin{proof}
    Assume not. For some $f:\finsubsets{\kappa} \rightarrow \kappa$ and $a \in \finsubsets{\eta}$ we have $j(f)(a \cup s) \in \left(\eta,(\gamma^+)^M\right)$. Note that $j(f)(a \cup s)$ cannot be equal to $\eta$ as it is an $s$-generator.
    
    Pick some function $g:\kappa^2 \rightarrow \kappa$ such that $\ptwimg{g}{(\{\alpha\} \times \card(\alpha))} = \alpha$ for all $\alpha < \kappa$.
    
    In $M$ then $j(g) \restr (\{j(f)(a \cup s)\} \times \gamma)$ is onto $j(f)(a \cup s)$. Therefore $\eta = j(g)(j(f)(a \cup s),\zeta)$ for some $\zeta < \gamma$.
    
    Define now a function $h: \finsubsets{\kappa} \times \kappa \rightarrow \kappa$ by $h(b,\beta) := g(f(b),\beta)$. Thus $\eta = j(h)(a \cup s,\zeta)$. Contradiction.
\end{proof}

\begin{corollary}\label{genclosedrem}${}$
    \begin{itemize}
        \item[$(a)$] The set of generators in $\left(\gamma,(\gamma^+)^M\right)$ is closed below $(\gamma^+)^M$.
        \item[$(b)$] If $(\gamma^+)^M = (\gamma^+)^V$ then the set of generators unbounded in $(\gamma^+)^M$.
        \item[$(c)$] $\eta < \min(s)$ is an $s$-generator iff for every  $f:\finsubsets{\kappa} \rightarrow \kappa$ there are $U^j_{ \{\eta\} \cup s}$-measure one many $b \in \left[\kappa\right]^{\length(s) + 1}$ such that for every $a \in [b(0)]^{<\omega}$,
        $f(a \cup (b\downharpoonright\length(s)))$ is either below $b(0)$ or at least $b(0)^+$.
    \end{itemize}\end{corollary}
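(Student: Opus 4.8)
I will read ``generators'' as $s$-generators for the fixed $s$ of Lemma~\ref{genclosed}, and prove the three items separately.

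\emph{Part (a).} Let $\langle \eta_i : i < \theta\rangle$ be increasing with each $\eta_i$ an $s$-generator in $(\gamma,(\gamma^+)^M)$ and $\eta := \sup_i \eta_i < (\gamma^+)^M$; I must show $\eta$ is an $s$-generator. Since $\eta \in (\gamma,(\gamma^+)^M)$ I verify the criterion of Lemma~\ref{genclosed}: fix $f:[\kappa]^{<\omega}\to\kappa$ and $a \in [\eta]^{<\omega}$. As $a$ is finite and $\eta=\sup_i\eta_i$, there is $i$ with $a \in [\eta_i]^{<\omega}$; applying Lemma~\ref{genclosed} to the $s$-generator $\eta_i$ gives $j(f)(a\cup s)\geq (\gamma^+)^M$ or $j(f)(a\cup s)<\eta_i\leq\eta$, which is exactly the criterion for $\eta$. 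Hence the set of $s$-generators is closed below $(\gamma^+)^M$.

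\emph{Part (b).} Assume $(\gamma^+)^M=(\gamma^+)^V=:\lambda$, a regular $V$-cardinal with $\cof(\lambda)>\omega$. For $\beta<\lambda$ set
\[ V_\beta := \{\, j(f)(a\cup s) : f:[\kappa]^{<\omega}\to\kappa,\ a\in[\beta]^{<\omega},\ j(f)(a\cup s)<\lambda \,\}. \]
Under GCH there are $2^\kappa=\kappa^+\leq\gamma$ functions $f$ and $\leq|\beta|\leq\gamma$ finite tuples $a$, so $|V_\beta|\leq\gamma<\lambda$ and, by regularity, $\sup V_\beta<\lambda$. Given $\beta_0\in(\gamma,\lambda)$, recursively put $\beta_{n+1}:=\sup(V_{\beta_n})+1\ (>\beta_n)$ and let $\eta:=\sup_n\beta_n$; since $\cof(\lambda)>\omega$ we have $\eta<\lambda$, and $\eta>\gamma$. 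If $\eta$ were not an $s$-generator then $\eta=j(f)(a\cup s)$ for some $f$ and $a\in[\eta]^{<\omega}$; picking $n$ with $a\subseteq\beta_n$ gives $\eta\in V_{\beta_n}$, so $\eta\leq\sup V_{\beta_n}<\beta_{n+1}\leq\eta$, a contradiction. Thus $s$-generators are unbounded in $\lambda$.

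\emph{Part (c).} Fix $\eta<\min(s)$ and let $W:=U^j_{\{\eta\}\cup s}$, a measure on $[\kappa]^{\length(s)+1}$; as $\eta$ is the least element of $\{\eta\}\cup s$, a generic $b$ has first coordinate $b(0)$ in the role of $\eta$ and its remaining $\length(s)$ coordinates, written $b'$, in the role of $s$. The plan has two steps. First I upgrade the defining property from ``$j(f)(a\cup s)\neq\eta$'' to interval-avoidance: I claim $\eta$ is an $s$-generator iff $j(f)(a\cup s)<\eta$ or $j(f)(a\cup s)\geq(\eta^+)^M$ for all $f$ and all $a\in[\eta]^{<\omega}$. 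The nontrivial direction reuses the pairing trick from the proof of Lemma~\ref{genclosed}: if $\xi:=j(f)(a\cup s)$ landed in $(\eta,(\eta^+)^M)$ then $|\xi|^M\leq\eta$, so with $g$ satisfying $g[\{\alpha\}\times\card(\alpha)]=\alpha$ we get $\eta=j(g)(\xi,\zeta)$ for some $\zeta<\eta$, and absorbing $g,f$ into one $h$ gives $\eta=j(h)\big((a\cup\{\zeta\})\cup s\big)$ with $a\cup\{\zeta\}\in[\eta]^{<\omega}$, contradicting $s$-genericity. Here $\eta<\min(s)$ is exactly what guarantees $a\cup\{\zeta\}$ lies wholly below $s$, so the representation has the required shape. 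Second, I translate the upgraded property across $W$: writing $\eta=j(\mathrm{proj}_0)(\{\eta\}\cup s)$, $j(f)=j(c_f)(\{\eta\}\cup s)$ (with $c_f$ the constant function $f$), and $(\eta^+)^M=j(h_0)(\eta)$ where $h_0(\alpha)=(\alpha^+)^V$, Łoś's theorem for $W$ through the factor map $[g]_W\mapsto j(g)(\{\eta\}\cup s)$ into $M$ shows that ``for all $a\in[\eta]^{<\omega}$, $j(f)(a\cup s)<\eta$ or $\geq(\eta^+)^M$'' holds in $M$ iff
\[ \{\, b : \forall a\in[b(0)]^{<\omega},\ f(a\cup b')<b(0)\ \text{or}\ f(a\cup b')\geq (b(0)^+) \,\}\in W. \]
Combining the two steps yields $(c)$.

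\emph{Main obstacle.} Parts (a) and (b) are routine (finiteness of $a$; a cardinality/cofinality count under GCH). The crux is the first step of (c): seeing that $s$-genericity forces avoidance of the entire block $[\eta,(\eta^+)^M)$, not merely the point $\eta$, and arranging the pairing argument so the resulting value is genuinely of the form $j(h)(a'\cup s)$ with $a'\in[\eta]^{<\omega}$ --- precisely where $\eta<\min(s)$ is indispensable. The ensuing Łoś translation is bookkeeping, the only delicate point being that $(\eta^+)^M$ pulls back to the $V$-successor cardinal $b(0)^+$.
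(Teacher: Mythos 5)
Your proof is correct and follows the route the paper intends: the corollary is stated without proof there precisely because parts $(a)$ and $(c)$ are direct consequences of Lemma \ref{genclosed} (part $(a)$ from its interval criterion applied to the $\eta_i$'s below the supremum, part $(c)$ by rerunning the pairing trick from its proof and then translating through {\L}o\'{s}'s theorem for $U^j_{\{\eta\}\cup s}$), while part $(b)$ is the standard closure-point count using GCH and the regularity of $(\gamma^+)^V=(\gamma^+)^M$. Your remark that $\eta<\min(s)$ is exactly what keeps the auxiliary parameter $\zeta$ below $\eta$ so that the witnessing representation has the form $j(h)(a'\cup s)$ with $a'\in[\eta]^{<\omega}$ is the point the paper leaves implicit, and you have supplied it correctly.
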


\begin{definition}\label{Def:P_U}
Let $U$ be a measure on $\kappa$. 
Prikry forcing $\po_U$ consists of pairs $p = \la s, T\ra$ where $s \in [\kappa]^{<\omega}$ is an increasing sequence, and $T \subseteq  [\kappa]^{<\omega}$ is a tree with $stem(T) = s$, and for every $t \in T$ which extends $s$, the splitting set $\suc_T(t) = \{ \nu < \kappa \mid t \fr \la \nu\ra \in T\}$ belongs to $U$.
A condition $p' = \la s',T'\ra$ extends $p$ if $s' \in T$ and $T' \subseteq T_{>s'} = \{ r \in T \mid r \text{ is a proper end extension of } s'\}$. 
We further say that $p'$ is a direct extension of $p$ if $s' = s$.
\end{definition}

The Prikry Lemma asserts that every statement $\sigma$ in the forcing language can be decided by a direct extension. Since the direct extension ordering is $\kappa$-closed, it follows that $\po_U$ does not add bounded subsets to $\kappa$. Since $\po_U$ is $\kappa$-centered, it preserves all cardinals $\geq \kappa^+$.

A $V$-generic filter $G \subseteq \po_U$ is associated with its generating $\omega$-sequence $\vec{\kappa}= \bigcup\{ s \mid \exists T (s,T) \in G\}$, which is cofinal in $\kappa$.
We denote the sequence by $\vec{\kappa} =\la \kappa_n \mid n < \omega\ra$. 

\begin{lemma}(Strong Prikry Lemma)
For every condition $p = \la s,T\ra \in \po_U$ and a dense open set $D$ of $\po_U$ there is a direct extension $p^* = \la s,T^*\ra$ and some $n < \omega$ such that for every $t \in T^*$ of length $t = |n|$, $p \fr t := \la  t, T_{>t} \ra$ belongs to $D$.
\end{lemma}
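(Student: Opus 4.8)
The plan is to reduce the uniformity assertion to the basic Prikry Lemma together with the $\kappa$-closure of the direct extension order, which I will write $\le^*$. Working below $p$, since $D$ is dense open the generic filter $\dot G$ meets $D$; every condition of $\dot G\cap D$ lies below $p$ and hence has stem end-extending $s$. I would introduce a name $\dot n$ for the least $k<\omega$ such that $\dot G\cap D$ contains a condition whose stem has length $|s|+k$. The density of $D$ forces $\dot n$ to be a well-defined natural number, and the \emph{minimality} built into its definition will be essential at the last step.

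Next I would pin down the value of $\dot n$ by a single direct extension. For each $m<\omega$ the basic Prikry Lemma yields a direct extension deciding the statement ``$\dot n=m$''; iterating this and using that $\le^*$ is $\kappa$-closed (so in particular closed under $\le^*$-decreasing $\omega$-sequences), I would amalgamate these into one direct extension $p^\infty=\la s,T^\infty\ra$ of $p$ that decides ``$\dot n=m$'' for every $m$ simultaneously. Since $\dot n$ is forced to be finite, exactly one value can be forced, so $p^\infty\Vdash \dot n=n$ for a unique $n<\omega$, and this $n$ is the level claimed in the lemma.

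It then remains to shrink $T^\infty$ so that \emph{every} node at level $n$ already yields a condition in $D$. Fix $t\in T^\infty$ with $|t|=|s|+n$ and consider $q_t:=\la t,(T^\infty)_{>t}\ra\le p^\infty$, which still forces $\dot n=n$. Below $q_t$ the generic enters $D$ for the first time at level $n$, and its stem at that level is forced to equal $t$; hence there is a tree $S\in V$ with $\la t,S\ra\in D$ compatible with $q_t$. As $D$ is open and $U$ is an ultrafilter (so the splitting sets of $S$ and of $(T^\infty)_{>t}$ intersect to sets in $U$), the common direct extension $\la t,\,S\cap(T^\infty)_{>t}\ra$ again lies in $D$, giving $\la t,S_t\ra\le^* q_t$ with $\la t,S_t\ra\in D$. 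Carrying this out independently for all level-$n$ nodes $t$ and reattaching the trees $S_t$ below the (unchanged) truncation of $T^\infty$ at level $n$ produces a tree $T^*$ all of whose splitting sets lie in $U$. The condition $p^*=\la s,T^*\ra$ is a direct extension of $p$, and for every $t\in T^*$ with $|t|=|s|+n$ we have $p\fr t=\la t,(T^*)_{>t}\ra=\la t,S_t\ra\in D$, as required.

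The main obstacle is the second step: a natural-number-valued name cannot in general be evaluated by a direct extension, and the argument succeeds only because the basic Prikry Lemma decides each individual statement ``$\dot n=m$'' while the $\kappa$-closure of $\le^*$ lets me glue the countably many decisions together before invoking the finiteness of $\dot n$. The subsequent extraction of an actual condition $\la t,S_t\ra\in D$ whose stem is \emph{exactly} $t$, rather than some longer stem merely produced by density, is precisely what forces the minimality clause in the definition of $\dot n$.
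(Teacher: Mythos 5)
Your proof is correct and is the standard argument for the Strong Prikry Lemma (introduce a name for the least level at which the generic meets $D$, decide all its possible values by a single direct extension using $\kappa$-closure of $\le^*$, then uniformize the subtrees above the level-$n$ nodes by intersecting trees and reattaching); the paper states this lemma without proof, so there is nothing to diverge from. The only slips are cosmetic: not every condition of $\dot G\cap D$ lies below $p$ (only that some does, by openness and directedness of $\dot G$, which is all you need for $\dot n$ to be well defined), and the tree intersections use $\kappa$-completeness of the filter $U$ rather than its being an ultrafilter.
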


Combining the the strong Prikry lemma with the closure property of the direct extension ordering gives the following useful result.

\begin{corollary}\label{Cor:PrikryNameGuessing}
Suppose that $U$ is a normal measure on $\kappa$, $\name{x} = \la \name{x}_n \mid n < \omega\ra$ is a $\po_U$-name, and $p = \la s,T\ra$ a condition which forces that each $\name{x}_n$ belongs to $V_{\name{\kappa}_{n+1}}$. Then there is a direct extension $p^* = \la s,T^*\ra$ of $p$, and a function $f_x : T^* \to [V_\kappa]^{\omega}$, so that for each $t \in T^*$, 
$f_x(t) \in V_{\max{t}}^{|t|}$ and 
$p \fr t \Vdash \check{f}_x(t) = \name{x} \uhr (|t|)$. 
\end{corollary}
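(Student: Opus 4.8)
The plan is to decide every coordinate $\name{x}_m$ along a single tree, exploiting that the Strong Prikry Lemma produces a deciding length that is \emph{uniform} over all stems of a given length, and then to collapse the countably many resulting trees into one using the $\kappa$-completeness of $U$. The support hypothesis $\name{x}_m \in V_{\name{\kappa}_{m+1}}$ is what will pull the decided values down into $V_{\max t}$.

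First I would, for each $m < \omega$, consider the set $D_m$ of conditions that decide $\name{x}_m$, i.e.\ that force $\name{x}_m = \check{y}$ for some $y$. This set is clearly open. To see that it is dense, given a condition $q$ I would first lengthen its stem past $m+1$, thereby pinning $\name{\kappa}_{m+1}$ to a fixed ordinal $\nu < \kappa$. Since $\po_U$ adds no bounded subsets of $\kappa$, the set $V_\nu$ is absolute between $V$ and any generic extension, so this condition forces $\name{x}_m$ to lie in the ground-model set $V_\nu$, which has size $<\kappa$. Deciding each of the fewer than $\kappa$ statements $\name{x}_m = \check{y}$ (for $y \in V_\nu$) by the Prikry Lemma and amalgamating these direct extensions into a single one using the $\kappa$-closure of $\leq^{*}$ produces a condition in $D_m$ below $q$.

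Next I would apply the Strong Prikry Lemma to $p$ and $D_m$ to obtain a tree $T^{(m)} \subseteq T$ and a length $\ell_m$ such that $p \fr t \in D_m$ for every $t \in T^{(m)}$ with $|t| = \ell_m$; openness of $D_m$ gives the same for all $t \in T^{(m)}$ with $|t| \geq \ell_m$, and it also lets me assume that the $\ell_m$ are strictly increasing with $\ell_m \geq m+2$. Setting $T^{*} := \bigcap_{m < \omega} T^{(m)}$, each successor set of $T^{*}$ is a countable intersection of members of $U$ and hence lies in $U$ by $\kappa$-completeness, so $p^{*} := \la s, T^{*} \ra$ is a direct extension of $p$ that refines every $\la s, T^{(m)} \ra$.

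Finally I would read off $f_x$. Fix $t \in T^{*}$ with $|t| = n$. For each $m$ with $\ell_m \leq n$ we have $t \in T^{(m)}$, so $p \fr t$ decides $\name{x}_m = \check{y}_m$; moreover $\ell_m \geq m+2$ forces $m+1 \leq n-1$, so $\name{\kappa}_{m+1}$ equals the already determined stem value $t(m+1) \leq \max t$, whence $p \fr t \Vdash \check{y}_m \in V_{t(m+1)} \subseteq V_{\max t}$ and therefore $y_m \in V_{\max t}$. Collecting the decided values into $f_x(t)$ then gives $f_x(t) \in V_{\max t}^{\,|t|}$ together with $p \fr t \Vdash \check{f}_x(t) = \name{x} \uhr |t|$. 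The one genuinely delicate step is the uniformity used just above: a naive level-by-level shrinking would leave the length at which $\name{x}_m$ gets decided depending on the particular stem, and it is exactly the Strong Prikry Lemma that removes this dependence. Once that is in hand, $\kappa$-completeness performs the amalgamation and the support hypothesis delivers membership in $V_{\max t}$.
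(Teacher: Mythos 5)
Your overall strategy --- decide $\name{x}_m$ coordinatewise, use the hypothesis $\name{x}_m \in V_{\name{\kappa}_{m+1}}$ to confine the decided value to a small ground-model set, and intersect countably many trees via $\kappa$-completeness --- is the intended one, and your density argument for $D_m$ is correct. The gap is in the assembly. The Strong Prikry Lemma hands you \emph{some} deciding level $\ell_m$ with no upper bound, and even under your own stipulation $\ell_m \geq m+2$, at a node $t$ of length $n$ the coordinates decided by $p^* \fr t$ are exactly those $m$ with $\ell_m \leq n$, hence contained in $\{m : m \leq n-2\}$. So $\name{x}_{n-1}$ (and, if the $\ell_m$ grow quickly, many earlier coordinates) is \emph{not} decided at $t$, and the final claim $p^* \fr t \Vdash \check{f}_x(t) = \name{x} \uhr |t|$ does not follow. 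Observe that the conclusion needs an \emph{upper} bound of roughly $\ell_m \leq m+1$, which is in tension with the lower bound you impose in order to place the decided value inside $V_{\max t}$; this is not a removable indexing quibble but the point where the black-box use of the Strong Prikry Lemma loses exactly the information the support hypothesis was supposed to provide.

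The repair is to prove the Strong Prikry conclusion for $D_m$ at the correct level directly, node by node, instead of quoting the lemma. For each $t \in T$ long enough that $\name{\kappa}_{m+1}$ is determined by the stem $t$ (and is $\leq \max t$), the condition $p \fr t$ forces $\name{x}_m$ into a fixed ground-model set of rank below $\max t$ and of size $<\kappa$; by exactly your amalgamation-over-$V_\nu$ argument (Prikry property plus $\kappa$-closure of $\leq^*$ above the stem $t$), some \emph{direct} extension $\la t, T^t_m\ra$ of $p \fr t$ decides $\name{x}_m = \check{y}^t_m$ with $y^t_m \in V_{\max t}$. Intersecting over the finitely many relevant $m < |t|$ gives one tree $T^t$ per node, and a standard fusion --- set $\suc_{T^*}(t) := \bigcap \{\suc_{T^r}(t) : s \subseteq r \subseteq t\}$, a finite intersection of sets in $U$ --- yields a single $T^*$ with $T^*_{>t} \subseteq T^t$ for every $t \in T^*$, which gives the corollary. (The paper offers no written proof, only the remark that the result follows from the Strong Prikry Lemma together with the closure of the direct extension order; the node-by-node fusion just described is the argument that remark points to, and your version needs this modification to close.)
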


\section{Rudin-Keisler Capturing}\label{Section:RKcapturing}

\begin{definition}($U$-sequences of stationary tuples)

    \begin{itemize}
        \item[$(a)$] A \emph{stationary-tuple} over $\kappa$ is a finite sequence $\<S_i: i < n\>$ where each $S_i$ is a stationary subset of a regular cardinal $\kappa^{\tau_i} \geq \kappa$ so that $\la \kappa^{+\tau_i} \mid i < n\ra$ forms  a strictly increasing sequence.
        
        We say that the sequence is of \emph{type} $\<(\tau_0,\eta_0),\ldots,(\tau_{n-1},\eta_{n-1})\>$ if moreover, for each $i < n$,  $S_i \subseteq \kappa_i \cap \cof(\eta_i)$, with both $\eta_i,\tau_i < \kappa$. 
        
        \item[$(b)$]     Let $U$ be a normal ultrafilter on $\kappa$. A function $f:\kappa \rightarrow H_\kappa$ is a $U$-sequence of stationary-tuples of type $\<(\tau_0,\eta_0),\ldots,(\tau_{n-1},\eta_{n-1})\>$ iff for $U$-almost all $\alpha$, 
        $f(\alpha) = \<S^\alpha_0,\ldots,S^\alpha_{n-1}\>$ is a stationary-tuple over $\alpha$ of type $\<(\tau_0,\eta_0),\ldots,(\tau_{n-1},\eta_{n-1})\>$.
        
        
        \item[$(c)$] A $U$-sequence $f$ of stationary-tuples of type $\<(\tau_0,\eta_0),\ldots,(\tau_n,\eta_n)\>$ is trivial iff 
        $f(\alpha) = \<\cof(\eta_i) \cap \alpha^{+\tau_i}\>$ for $U$-almost all $\alpha$.
    \end{itemize} 
\end{definition}

We will mainly focus on types where either $\tau_i = i + 2$, or $\tau_i = \omega +1$.

\begin{definition}(RK capturing of $U$-sequences)
    \begin{itemize}
        \item[$(a)$] Let $f$ be a $U$-sequence of stationary tuples. We say $f$ is \emph{Rudin-Keisler captured} (relative to $U$) iff there exists a minimally generated ultrafilter $U \leq_{RK} V$ such that $a \in \prod \iota_V(f)(\kappa)$ where $a = [id]_V$ is the principal element of $V$.
        
        \item[$(b)$] Let $\tau < \kappa$ be an ordinal. We say that the full Rudin-Keisler capturing holds up to $\tau$ (relative to $U$) iff for every finite sequence 
        $$t = \la (\tau_0,\eta_0),\dots,(\tau_{n-1},\eta_{n-1}) \ra$$ with $\tau_0 > 1$ and $\tau_{n - 1} \leq \tau$, all $U$-sequences of stationary sets of type $t$ are Rudin-Keisler captured (relative to $U$).
    \end{itemize}
\end{definition}


\begin{lemma}\label{RKcaptcons}
   Let $j:V \rightarrow M$ be an elementary embedding of the universe with critical point $\kappa$ such that $\Pot(\kappa^{+\tau}) \subseteq M$. Then full Rudin-Keisler capturing holds up to $\tau$ relative to $U^j_\kappa$.
\end{lemma}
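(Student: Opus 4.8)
The plan is to realize the witnessing minimally generated measure as $W := U^j_a$ for a carefully chosen generator $a$ of $j$ whose entries lie inside the stationary sets that appear in $j(f)(\kappa)$. Fix a type $t = \la (\tau_0,\eta_0),\ldots,(\tau_{n-1},\eta_{n-1})\ra$ with $\tau_0 > 1$, $\tau_{n-1}\le\tau$, and a $U$-sequence $f$ of stationary tuples of type $t$, where $U = U^j_\kappa$. Since $\{\alpha : f(\alpha)\text{ is a stationary tuple over }\alpha\text{ of type }t\}\in U$, elementarity gives that $j(f)(\kappa) = \la S_0,\ldots,S_{n-1}\ra$ is a stationary tuple over $\kappa$ of type $t$ in $M$; in particular each $S_i$ is stationary in the regular cardinal $(\kappa^{+\tau_i})^M$ and concentrates on $\cof(\eta_i)$. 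Note that regularity of $\kappa^{+\tau_i}$ forces each $\tau_i$ to be a successor ordinal, so I set $\gamma_i := \kappa^{+(\tau_i-1)}$. Because $\Pot(\kappa^{+\tau})\subseteq M$ and $V\models\mathrm{GCH}$, the model $M$ computes all cardinal successors up to $\kappa^{+(\tau+1)}$ correctly and contains every club subset of each $\kappa^{+\tau_i}$; hence $(\gamma_i^+)^M = (\gamma_i^+)^V = \kappa^{+\tau_i}$, and each $S_i$ is genuinely stationary in $\kappa^{+\tau_i}$ in $V$ with the cofinality computation being absolute.

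The heart of the argument is a descending choice of generators $\xi_{n-1} > \cdots > \xi_0$ with $\xi_i \in S_i$. Suppose $\xi_{i+1},\ldots,\xi_{n-1}$ have been chosen and set $s = \la \xi_{n-1},\ldots,\xi_{i+1}\ra$. I claim the set of $s$-generators lying in $(\gamma_i,\kappa^{+\tau_i})$ is club in $\kappa^{+\tau_i}$: closure follows from Corollary \ref{genclosedrem}(a) together with the characterizations in Lemma \ref{genclosed} and Corollary \ref{genclosedrem}(c), while unboundedness is Corollary \ref{genclosedrem}(b), whose hypothesis $(\gamma_i^+)^M = (\gamma_i^+)^V$ holds precisely by $\Pot(\kappa^{+\tau})\subseteq M$. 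Intersecting this club with the stationary set $S_i$ yields an $s$-generator $\xi_i \in S_i$. Since the chosen $\xi_{i+1}$ satisfies $\xi_{i+1} > \gamma_{i+1} \ge \kappa^{+\tau_i} > \xi_i$, the entries genuinely descend and each is an $s$-generator over the larger ones, so $a := \la \xi_0,\ldots,\xi_{n-1}\ra$ is a generator of $j$.

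Consequently $W = U^j_a$ is a minimally generated measure on $[\kappa]^n$. Let $k : M_W \to M$ be the factor embedding, so $k\circ\iota_W = j$, $k([id]_W) = a$, and (since $\iota_W$ and $j$ share critical point $\kappa$) $\cp(k) > \kappa$ and $k(\kappa)=\kappa$. To see $U \le_{RK} W$, define $\pi:[\kappa]^n\to\kappa$ by letting $\pi(b)$ be the least inaccessible $\delta$ with $b\subseteq\delta^{+\tau_{n-1}}$ (and $0$ otherwise). As $a\subseteq\kappa^{+\tau_{n-1}}$ while every $\xi_i>\kappa$, the least $M$-inaccessible $\delta$ with $a\subseteq\delta^{+\tau_{n-1}}$ is $\kappa$ itself, so $j(\pi)(a)=\kappa$ and $\pi_\ast W = U^j_{j(\pi)(a)} = U$; thus $U$ is the required normal measure $\le_{RK} W$. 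Finally, applying $k$ gives $k(\iota_W(f)(\kappa)) = j(f)(k(\kappa)) = \la S_0,\ldots,S_{n-1}\ra$ and $k([id]_W)=a$; since $\xi_i\in S_i$ for each $i$ and $k$ is elementary, $[id]_W \in \prod\iota_W(f)(\kappa)$ holds in $M_W$. As $t$ and $f$ were arbitrary, full Rudin-Keisler capturing holds up to $\tau$ relative to $U^j_\kappa$.

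The step I expect to demand the most care is establishing the club-ness of the set of $s$-generators at each successor cardinal $\kappa^{+\tau_i}$, together with the bookkeeping that keeps the successive choices mutually generating in descending order. This is exactly where $\Pot(\kappa^{+\tau})\subseteq M$ is indispensable: it forces $M$ to compute the relevant successor cardinals and clubs correctly, so that each $S_i$ remains stationary and the demand $\cof(\eta_i)$ stays absolute, and it supplies the equality $(\gamma_i^+)^M=(\gamma_i^+)^V$ required to invoke Corollary \ref{genclosedrem}(b) for unboundedness.
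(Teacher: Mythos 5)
Your proposal is correct and follows essentially the same route as the paper: build the generator $a$ by a downward induction, at each stage intersecting the club of $s$-generators in the successor cardinal $\kappa^{+\tau_i}$ (whose correctness in $M$ is exactly what $\Pot(\kappa^{+\tau})\subseteq M$ buys) with the stationary set $S_i$, and then take $U^j_a$ as the witnessing minimally generated measure. The only cosmetic difference is that you transfer stationarity of $S_i$ to $V$ and intersect there, whereas the paper intersects inside $M$; your extra detail on the factor map and the projection to $U^j_\kappa$ just spells out what the paper dismisses as ``easy to see.''
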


\begin{proof}
    First let $U := U^j_\kappa$, the normal measure associated to $j$. Let $f$ be any $U$-sequence of stationary tuples of type $t = \la (\tau_0,\eta_0),\dots,(\tau_{n-1},\eta_{n-1}) \ra$ with $\tau_0 > 1$ and $\tau_{n - 1} \leq \tau$. Consider $j(f)(\kappa) = \<S_0,\ldots,S_n\>$. We will find a generator $a$ of $j$ such that $a \in \prod j(f)(\kappa)$. Construction is by induction (going downwards).
    
    Assume that we have already found a generator $a_k$ such that $a_k \in S_k \times \ldots \times S_n$. (Note we start with $a_{(n + 1)} = \emptyset$ which is not a generator but on the other hand we do not require anything of it at this point.) We have that $(\kappa^{+\tau_{k-1}})^M = \kappa^{+\tau_{k-1}}$ is a successor cardinal in $M$. Therefore the set of $a_k$-generators of $j$ forms a club in $\kappa^{+\tau_{k-1}}$. By assumption this set is in $M$ and is, in fact, a club there. By \L o\'{s}'s Theorem, $M$ believes that $S_{k - 1}$ is a stationary set. So there must be some $\alpha_{k - 1}$ in $S_{k - 1}$ that is an $a_k$-generator. Then $a_{k - 1} = a_k \cup \{\alpha_{k - 1}\}$ is as wanted.
    
    Having constructed $a$ it is then easy to see that $U^j_a \geq_{RK} U^j_\kappa$ witnesses Rudin-Keisler capturing for $f$.
\end{proof}

\begin{remark}
       To capture the trivial sequence of type $\<(\tau_0+1,\eta_0),\ldots,(\tau_{n-1}+1,\eta_{n-1})\>$ we merely need an embedding $j:V \rightarrow M$ with a generator $a$ such that $a(i) \in \left((\kappa^{+\tau_i })^M,(\kappa^{+(\tau_i+1)})^M\right)$ of cofinality $\eta_i$.
\end{remark}

We will demonstrate how the Rudin-Keisler capturing implies Mutual Stationarity for sequences in Prikry generic extensions.

\begin{proposition}\label{Prop:CapturingMain}
    Let $U$ be a normal measure on $\kappa$ and $\vec{\kappa} = \la \kappa_n \mid n < \omega\ra$ be a $\po_U$-generic sequence.
    \begin{itemize}
        \item[$(a)$] If the trivial sequence of type $\<(2,\eta),\ldots,((k + 2),\eta)\>$ is Rudin-Keisler captured relative to $U$, then in $V\left[\vec{\kappa}\right]$ for every $\zeta < \kappa$ there is a stationary set of $x \subset \kappa$ such that 
        \begin{itemize}
            \item $\zeta \subset X$,
            \item $\cof(X \cap \kappa^{+(l + 1)}_n) = \eta$ for every $n < \omega$ and $1 
            \leq l \leq k$,
            \item $\cof(X \cap \kappa^*) = \zeta$ if $\kappa^* > \zeta$ is a regular cardinal in $X$ not covered by the above.
        \end{itemize}
        \item[$(b)$] If the full Rudin-Keisler capturing holds to some $\tau < \kappa$, then in $V\left[\vec{\kappa}\right]$ every function $S= \la \vec{S}(n) \mid n < \omega\ra$ of finite sequences  $S(n)  = \la S_n^i \mid i \leq \ell_n\ra$ of types $\la (\tau^n_0,\eta^n_0), \dots, (\tau^n_{\ell_n},\eta^n_{\ell_n})\ra$ with $1 < \tau^n_0 < \tau^n_{\ell_n} \leq \tau$ and $\<\eta^n_i: i \leq \ell_n, n < \omega\>$ bounded by some $\eta < \kappa$ is mutually stationary.
    \end{itemize}
\end{proposition}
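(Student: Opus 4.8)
The plan is to prove (b) in full and to read off (a) as the special case of trivial sequences (with the extra clauses $\zeta \subseteq X$ and cofinality $\zeta$ at the uncovered regular cardinals). Both parts reduce, by the stationarity criterion stated after the definition of stationarity, to producing for an arbitrary algebra $\mathfrak{A} = \la H_\theta^{V[\vec\kappa]};\in,<_\theta,\dots\ra$ --- carrying the given functions together with $U$, the relevant names, and $\vec{S}$ --- a single $X \prec \mathfrak{A}$ that is cofinal in $\kappa$ and has $\sup(X \cap \kappa_n^{+\tau^n_i}) \in S^i_n$ for all $n,i$ (respectively the prescribed cofinalities, for (a)). I would build $X = \Sk^{\mathfrak{A}}(X_0 \cup \bigcup_{n<\omega} A_n)$, where $X_0 \prec \mathfrak{A}$ is internally approachable of a regular length $\delta$ chosen above $\eta = \sup_{n,i}\eta^n_i$, with $X_0 \cap \kappa$ bounded in $\kappa$ (so $\vec\kappa \notin X_0$), and, for (a), with $\zeta \subseteq X_0$ and $\delta = \zeta$ so that Lemmas \ref{IASkolemhulls} and \ref{finitevariationII} will later supply cofinality $\zeta$ at every unmarked regular cardinal. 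Each block $A_n$ is a subset of $[\kappa_n,\kappa_n^{+\tau})$ of size $\delta$, responsible for the placements at the $\kappa_n^{+\tau^n_i}$, and I will only use a tail of the Prikry points, above $\sup(X_0 \cap \kappa)$.

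The engine is a one-step placement at each Prikry point, and this is where Rudin-Keisler capturing enters. First I would descend to $V$: the algebra and $\vec{S}$ lie in $V[\vec\kappa]$ and hence are coded by $\po_U$-names, so by Corollary \ref{Cor:PrikryNameGuessing}, applied below a condition of the generic filter, the Skolem functions of $\mathfrak{A}$ on arguments below $\kappa_{n+1}$, and the tuples $\vec{S}(n)$ themselves, are computed by ground-model functions evaluated on the stem $\la \kappa_0,\dots,\kappa_n\ra$ (for (a) the sets are already $V$-definable cofinality classes, and name-guessing is needed only for the algebra). At a node $s$ of the name-guessing tree, the map sending the next point $\alpha$ to the type of the guessed tuple $\vec{S}(|s|)$ takes values in $H_\alpha$, hence by normality of $U$ is constant, equal to some single type $t(s) \le \tau$, on a $U$-measure-one set; on that set $\alpha \mapsto \vec{S}(|s|)$ is a genuine $U$-sequence $f_s$ of stationary tuples of type $t(s)$. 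Full capturing up to $\tau$ now furnishes, for each $s$, a minimally generated $W_s \ge_{RK} U$ (the ultrafilter denoted $V$ in that definition), via a projection $\pi_s$ with $(\pi_s)_* W_s = U$, whose generator $a_s = [\mathrm{id}]_{W_s}$ lies in $\prod \iota_{W_s}(f_s)(\kappa)$. Applying \L o\'{s}'s theorem to $W_s$, together with the measure-one characterization of the generator property in Corollary \ref{genclosedrem}(c) and projection along $\pi_s$, I obtain a $U$-measure-one set $G_s$ of levels $\alpha$ carrying a tuple $\vec c_s(\alpha) \in \prod f_s(\alpha)$ that is both a genuine local generator over $\alpha$ (so that adjoining it end-extends any hull below $\alpha$ built from $V$-functions) and lands in the prescribed stationary sets, whose coordinates have the required cofinalities.

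I would then run the recursion, writing $X_{\le n} = \Sk^{\mathfrak{A}}(X_0 \cup \bigcup_{m\le n} A_m)$. Genericity of $\vec\kappa$, in the form of a diagonal Prikry-tree argument over the measure-one sets $G_s$, guarantees that each $\kappa_n$ is good for its own stem, i.e. $\kappa_n \in G_{\vec\kappa\restr n}$, so that $\vec c(\kappa_n) := \vec c_{\vec\kappa\restr n}(\kappa_n) \in \prod\vec{S}(n)$ is available. I set $A_n \subseteq [\kappa_n,\kappa_n^{+\tau})$ to be $\kappa_n$ together with, for each $i$, a set cofinal of order type $\eta^n_i$ in the generator point $c_i(\kappa_n)$; the local generator property then forces $\sup(X_{\le n}\cap\kappa_n^{+\tau^n_i}) = c_i(\kappa_n) \in S^i_n$, of cofinality $\eta^n_i$, and, because $\vec c(\kappa_n)$ is a generator, $A_n$ end-extends $X_{\le n-1}$ below $\kappa_n$ and confines its effect to $[\kappa_n,\kappa_n^{+\tau})$. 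Later blocks lie above $\kappa_n^{+\tau}$ and end-extend below their own points, so by Lemma \ref{dblendext} they do not disturb $\sup(X\cap\kappa_n^{+\tau^n_i})$. For (a), where the marked cardinals $\kappa_n^{+2},\dots,\kappa_n^{+(k+1)}$ are consecutive, the cofinality $\zeta$ at every remaining regular cardinal is read off from the internal approachability of $X_0$ via Lemmas \ref{IASkolemhulls} and \ref{finitevariationII}, the confinement of each block making the intermediate clause of \ref{finitevariationII} vacuous; for (b) these remaining cardinals carry no requirement. Finally $X = \bigcup_n X_{\le n}$ is cofinal in $\kappa$ since $\kappa_n \in A_n$, and it meets the prescribed stationary sets with the prescribed cofinalities by construction; this gives (b), and (a) is the specialization to trivial sequences.

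The step I expect to be the main obstacle is the faithful transfer of these essentially local, stem-dependent capturing data into a single $\omega$-step construction driven by the actual generic. Two points are delicate. First, the types and the sets $\vec{S}(n)$ depend on the whole stem, not just on $\kappa_n$; reconciling this with the fixed-type format of Rudin-Keisler capturing forces the per-node passage to $f_s$ and $W_s$, and then a diagonal intersection of the sets $G_s$ along the Prikry tree, arranged so that the generic $\kappa_n$ is good precisely for the type it is assigned. Second, one must verify that the reflected local generators end-extend with respect to the stem-parametrized ground-model functions of Corollary \ref{Cor:PrikryNameGuessing}, including functions whose arguments mix levels and involve the Prikry points below $\kappa_n$; securing this uniformly across all $\omega$ levels, by folding these functions into the measure-one sets that are diagonalized, is the crux. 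Everything else is the descending Baumgartner bookkeeping of Lemma \ref{finitevariationII}, now carried out block by block.
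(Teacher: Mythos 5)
Your overall strategy is the right one and matches the paper's in outline: name-guessing along the Prikry tree, per-stem $U$-sequences $g_s$, minimally generated $U_s\geq_{RK}U$ from capturing, reflection of the generator property to measure-one sets via Corollary \ref{genclosedrem}, diagonalization into the tree, and a final hull of an internally approachable structure together with cofinal sequences converging to the generator points. But there is a genuine gap exactly at the point you flag as ``the crux,'' and it is not just a matter of uniform bookkeeping. In your architecture the base structure $X_0$ is fixed \emph{once}, before the blocks $A_n$ are adjoined, and the end-extension claim ``$A_n$ end-extends $X_{\leq n-1}$ below $\kappa_n$'' is supposed to follow from the local generator property alone. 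It does not. For a function $f\in X_0$ (a $\po_U$-name) and the tuple $b_n$, the generator/minimal-generation dichotomy only tells you that $f$ applied to arguments from $b_n$ lands either above the relevant $\kappa_n^{+\tau^n_i}$ or below $b_n(i)$; in the second case the value can be a \emph{new} ordinal below $\kappa_n$ (or below some $\kappa_m^{+\tau^m_j}$ with $m<n$) that is not in $X_{\leq n-1}$, which destroys both the end-extension and the already-arranged values of $\sup(X\cap\kappa_m^{+\tau^m_j})$. The paper's proof of Lemma \ref{Lemma:MainLemmaA} resolves this with an interleaved fusion of length $\zeta$: at each stage $\alpha$ it decides the amalgamated functions $f^\alpha_n$ of the current structure $Y^\alpha$, shrinks to sets $A^\alpha_s\supseteq B^\alpha_s\supseteq C^\alpha_s$ in $U_s$ on which each such function is either large or \emph{constant} (respectively bounded by a canonical function $h^{\mu}$), and then \emph{absorbs the resulting functions $g^\beta_s$ and the $h^{\mu}$'s into $Y^{\alpha+1}$}, so that after $\zeta$ stages the structure $Y$ contains all the constants and bounds its own functions can produce. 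Without this fixed-point construction the end-extension simply fails for the constant values, and no single pre-chosen $X_0$ can anticipate them, since they depend on the measure-one sets, which depend on the structure.

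A second, related omission concerns the clause $\cof(X\cap\kappa^*)=\zeta$ at uncovered regular cardinals in part (a), in particular at $\kappa^*=\kappa_n^+$. Lemma \ref{IASkolemhulls} does not apply there because $\kappa_n^+<\sup(A_n)$, so internal approachability of $X_0$ gives you nothing directly. The paper handles this with the intermediate refinement $B^\alpha_s$: any value of a structure function on $b_n$ that falls below $\kappa_n^+$ is bounded by $h^{\mu}(\kappa_n)$ for a canonical function $h^{\mu}$ that is put into $Y$, whence $\sup(X\cap\kappa_n^+)=\sup(Y\cap\kappa_n^+)$ has cofinality $\zeta$. Your proposal contains neither the canonical-function step nor any substitute for it. For part (b) as literally stated (mutual stationarity only) this second issue is moot, but your derivation of (a) as a special case depends on it. The remaining ingredients of your write-up (stabilizing the type by normality, Lemma \ref{dblendext} for non-interference of higher blocks, choosing the cofinal sequences definable from $b_n(i)$ over the base structure) agree with the paper and are fine.
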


    We give the proof for the second, more involved part (b). 
    Let $\name{S}$,$\la \name{\kappa}_n \mid n < \omega\ra$ be $\po_U$ for the function $S$ and the Prikry sequence, and $p = \la t,T\ra \in \po_U$ be a condition which forces the relevant statements about them. 
    We assume for notational simplicity that $t = \emptyset$.
    The general case follows from the same construction applied to the appropriate part of the Prikry tree $T$ (i.e., above $\kappa_{|t|}$) with the use
    of Lemma \ref{finitevariationII} to secure that the final addition of $t$ does not affect the result.
    By Corollary \ref{Cor:PrikryNameGuessing} we may assume there is a function  $F: T \rightarrow V_\kappa$ such that $(s,T_s) \forces \name{S}(|s|) = \check{F}(s)$ whenever $s \in T$. 
    Define for every $s \in T$ a $U$-sequence of stationary tuples $g_s$ of type $\<(\tau^s_i,\eta^s_i): i \leq \ell_{|s|})\>$ by $g_s(\alpha) := F(s \fr \la \alpha\ra)$.\footnote{In case $(a)$ $g_s$ is the trivial sequence of type $\<(2,\eta),\ldots,((k + 2),\eta)\>$.} 
     We will have established the Proposition when we prove the following technical lemma.

\begin{lemma}\label{Lemma:MainLemmaA}
    Assume that every $g_s$ is Rudin-Keisler captured by some minimally generated $U_s \geq_{RK} U$. 
    For every condition $(t,T)$, a $\po_U$-name of an algebra $\name{\mathfrak{A}}$, and a regular cardinal $\zeta$, $\eta \leq \zeta < \kappa$, there exists a direct extension $(t,T^*)$ of $(t,T)$ which forces the following: ``there is $X \elem \name{\mathfrak{A}}$ such that 
    \begin{itemize}
        \item[(i)] $\zeta \subset X$,
        \item[(ii)] for every $m < \omega$ if $\name{\kappa}_m > \zeta$ then $\sup(X \cap \dot{\kappa}_m^{+\dot{\tau}^m_i}) \in \name{S}^i_m$  for all $i \leq \ell_m$,
        \item[(iii)] $\cof(X \cap \kappa^{+\tau^*}_m) \in \{\zeta,\dot{\eta}^m_{\ell_m},\ldots,\dot{\eta}^m_{i + 1}\}$ for all $N \leq m < \omega$, all $i \leq \ell_n$ and all $\dot{\tau}^m_{i - 1} < \tau^* < \dot{\tau}^m_i$ ($\dot{\tau}^m_{-1} = 1$),
        \item[(iv)] $\cof(X \cap \kappa^*) = \zeta$ for all regular cardinals $\kappa^* > \zeta$ in $X$ not covered by the above.
    \end{itemize}
\end{lemma}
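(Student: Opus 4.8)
The plan is to build the desired $X$ in $V[\vec{\kappa}]$ as an increasing union $X = \Sk^{\mathfrak{B}}\big(Y_{-1}\cup\bigcup_{n}A_n\big)$ of Skolem hulls, where the "adjoined" sets $A_n$ come from a single $V$-definable assignment along the Prikry tree that is then read off along the generic branch. Concretely, fix $\theta$ large and work with hulls of $\mathfrak{B} = \la H_\theta; \in, <_\theta, U, F, T, \zeta, \la U_s\ra_s, \name{\mathfrak{A}}\ra$; since every hull will contain the name $\name{\mathfrak{A}}$ together with $F$, the stem, and all data needed to evaluate $\name{S}$ from the generic, its evaluation will be elementary in the evaluation of $\name{\mathfrak{A}}$ by the usual name-evaluation argument. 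For the base I take $Y_{-1}\elem\mathfrak{B}$ internally approachable of length $\zeta$ and size $\zeta$ with $\zeta+1\subseteq Y_{-1}$; by Lemma~\ref{IASkolemhulls} this is what will ultimately force cofinality $\zeta$ at every regular cardinal of $X$ not pinned down by a stationary target, giving clause (iv) and the default value $\zeta$ in clause (iii). The direct extension $(t,T^*)$ is produced by a fusion: at each node $s$ I shrink $\suc_T(s)$ to a $U$-measure-one set of "good" successors, and the diagonal intersection over the $\omega$ levels yields $T^*$.

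The heart of the argument is the single recursion step, translating the Rudin--Keisler capturing of $g_s$ into the set $A_n$ to adjoin at level $n$. Suppose the stem of length $n$ is $s$ and the next Prikry point is $\kappa_n=\alpha$. By hypothesis $g_s$ is captured by a minimally generated $U_s\geq_{RK}U$; writing $j=\iota_{U_s}:V\to M$ and $a=[id]_{U_s}$, capturing gives $a(i)\in j(g_s)(\kappa)_i$ for all $i\le \ell_n$, with $a$ a generator of $j$. Let $\pi$ be the projection witnessing $U\leq_{RK}U_s$ (recovering the critical point, equivalently the Prikry point, from the least coordinate). Unwinding $j(g_s)(\kappa)$ by \L o\'{s}'s theorem, for $U_s$-almost all tuples $b$ one has $b(i)\in F(s\fr\la\pi(b)\ra)_i = S^{s,\pi(b)}_i \subseteq (\pi(b))^{+\tau^s_i}$, and, crucially, each $b(i)$ is a generator over the earlier coordinates. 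This measure-one set projects under $\pi$ to a $U$-measure-one set of Prikry points, which I fold into the fusion; so for the genuinely generic $\alpha$ we may select (in $V$) a tuple with $\pi$-value $\alpha$ and put $A_n := \{b(i): i\le \ell_n\}\subseteq (\kappa_n)^{+\tau}$. The generator property is exactly what yields both halves of what we need: via Lemma~\ref{genclosed} and Corollary~\ref{genclosedrem} the adjoined coordinates do not create new ordinals below $\kappa_n$, so $A_n$ end-extends the structure below $\kappa_n$ (using Lemma~\ref{dblendext} for the changing domain), and it forces $\sup(X\cap(\kappa_n)^{+\tau^s_i}) = b(i) \in S^{s,\alpha}_i$ for each $i$, which is clause (ii). Since later levels $m>n$ contribute ordinals below $\kappa_{n+1}$ but above $(\kappa_n)^{+\tau}$, the end-extension property guarantees these sups are never disturbed.

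It remains to secure the cofinalities at the non-special levels, and here I apply Lemma~\ref{finitevariationII} inside the step: the inaccessible Prikry points $\kappa_n$ (inaccessible because $U$ concentrates on inaccessibles, and this is preserved since $\po_U$ adds no bounded subsets of $\kappa$) play the role of the $\zeta_l$'s, the generators in $A_n$ realize the stationary targets at the levels $\tau^s_i$, and internal approachability of length $\zeta$ fills in cofinality $\zeta$ elsewhere together with cofinalities among $\{\eta^s_{\ell_n},\ldots,\eta^s_{i+1},\zeta\}$ on the intermediate levels, exactly as in clauses (iii)--(iv). I emphasize that the capturing is indispensable precisely because it delivers generators landing in the stationary sets at the levels $\tau^s_i$ even when $\tau^s_i$ is a limit ordinal, i.e.\ at successors of singular cardinals $(\kappa_n)^{+\tau^s_i}$, where the purely Baumgartner-style hull manipulation of Lemma~\ref{finitevariationII} cannot reach. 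Assembling the steps along the generic branch gives the coherent sequence of hulls whose union is the required $X$, and the fusion produces $T^*$ forcing (i)--(iv).

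The step I expect to be the main obstacle is the faithful and \emph{uniform} translation carried out in the second paragraph: one must verify that the $U_s$-measure-one set of good coordinate tuples projects under $\pi$ to a $U$-measure-one set of Prikry points, so that a \emph{single} direct extension $T^*$ handles every generic branch simultaneously, and that the end-extension and cofinality guarantees obtained at level $n$ genuinely survive all later hull-extensions at levels $>n$. The former is where minimal generation of $U_s$ and the characterizations in Lemma~\ref{genclosed}/Corollary~\ref{genclosedrem} do the real work; the latter is where Lemmas~\ref{thatonelemma} and~\ref{dblendext} must be applied with care to the nested, domain-changing hulls to ensure that adjoining $A_m$ for $m>n$ neither lowers nor raises the already-fixed values $\sup(X\cap(\kappa_n)^{+\tau^s_i})$.
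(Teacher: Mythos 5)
Your overall architecture (internally approachable base of length $\zeta$, a tree refinement, generators supplied by the capturing hypothesis, end-extension below $\kappa_n$) matches the paper's, but two concrete steps are wrong or missing, and the first breaks clause (ii) outright. You adjoin the generator tuple itself, setting $A_n=\{b(i):i\le\ell_n\}$. Once $b(i)\in X$, also $b(i)+1\in X$, so $\sup(X\cap\kappa_n^{+\tau^n_i})>b(i)$, and the construction no longer certifies that this larger supremum lies in $S^n_i$. Worse, the generator dichotomy of Lemma~\ref{genclosed} and Corollary~\ref{genclosedrem} only controls values $j(f)(a\cup s)$ for arguments $a$ \emph{strictly below} the generator together with the higher coordinates; it says nothing about Skolem functions applied to the generator itself, so you lose all control of $X$ on $(b(i),\kappa_n^{+\tau^n_i})$. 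The paper instead adjoins, for each $i$, a sequence $\langle\beta^{n,i}_\gamma:\gamma<\eta^n_i\rangle$ cofinal in $b_n(i)$ and definable over $Y$ from $b_n(i)$; the refinement $C^\alpha_s$ obtained from minimal generation then forces every relevant Skolem value below $\kappa_n^{+\tau^n_i}$ to fall below $b_n(i)$, so the supremum is exactly $b_n(i)\in S^n_i$. These cofinal sequences, via a directed-system argument over $\prod_i\eta^m_i$, are also what yields the spectrum $\{\zeta,\eta^m_{\ell_m},\dots,\eta^m_{i+1}\}$ in clause (iii); Lemma~\ref{finitevariationII}, which you invoke for this, does not reach the levels $\kappa_n^{+\tau}$ that are only accessible through generators.

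Second, nothing in your argument controls $X$ on the interval $(\kappa_n,\kappa_n^{+})$, and clause (iv) at $\kappa^*=\kappa_n^{+}$ fails without such control. Since $\tau^s_0>1$, all generators of $U_s$ lie above $\kappa_n^{++}$, so the dichotomy at the lowest generator only says a value is either $\geq\kappa_n^{+\tau^s_0}$ or below $b_n(0)$ --- and ``below $b_n(0)$'' includes all of $(\sup(Y\cap\kappa_n^{+}),\kappa_n^{+})$. Adjoining your $A_n$ can therefore raise $\sup(X\cap\kappa_n^{+})$ and change its cofinality. The paper's proof inserts a dedicated refinement (the sets $B^\alpha_s$) forcing each such value $g^\beta_s(b)$ to be either $\geq\kappa_n^{+}$ or below $h^{\mu}(\kappa_n)$ for a canonical function $h^{\mu}$ which is then placed into the structure, whence $\sup(X\cap\kappa_n^{+})=\sup(Y\cap\kappa_n^{+})$ has cofinality $\zeta$. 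This also exposes why your single-pass fusion over a fixed base $Y_{-1}$ is insufficient: the measure-one sets at each node must control the amalgamated functions of the \emph{final} structure, which contains the auxiliary objects $g^\beta_s$ and $h^{\mu}$ produced by the refinement itself. The paper resolves this circularity by interleaving a $\zeta$-length descending sequence of direct extensions $(t,T^\alpha)$ with a growing internally approachable chain $\langle Y^\alpha:\alpha<\zeta\rangle$, deciding the functions of $Y^\alpha$ via Corollary~\ref{Cor:PrikryNameGuessing} and feeding the resulting objects into $Y^{\alpha+1}$, before intersecting everything at stage $\zeta$.
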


\begin{proof}
Fix in $V$ a canonical sequence of functions $\la h^\mu \mid \mu < \kappa^+\ra \in {}^\kappa \kappa$. Therefore $\mu = j_{U}(h^\mu)(\kappa)$ for all $\mu < \kappa^+$.
    We begin by fixing $U_s$ as in the statement of the lemma. Each $U_s$ concentrates on finite subsets of $\kappa$ of fixed size $\ell_s + 1$. Let $\pi_s:\left[\kappa\right]^{\ell_s + 1} \rightarrow \kappa$ witness $U_s \geq_{RK} U$.
    
    In the generic extension $V[\vec{\kappa}]$, let $U_n : =  U_{\vec{\kappa}\uhr n}$, $\ell_n := \ell_{\vec{\kappa} \restr n}$, $\tau^n_i = \tau^{\vec{\kappa} \restr n}_i$, and $\eta^n_i = \eta^{\vec{\kappa}\restr n}_i$. 
    The structure $X$ as in the statement of the lemma will be an elementary substructure of 
    $\A = \<H_\theta^{V[\vec{\kappa}]}; \in,<_\theta,\lambda\>$ for $\theta = (2^\kappa)^{++}$ and $\lambda = (2^\kappa)^+$. We will make use the basic properties and terminology from Section \ref{Section:Prelim} for the construction. 
    Back in $V$, let $\name{\mathfrak{A}}$,  $\name{\vec{\kappa}} = \<\dot{\kappa}_n \mid n < \omega\>$, and 
    $\<\name{U}_n \mid n < \omega\>$
    be names for these objects. 
    
    Fixing a regular cardinal $\zeta < \kappa$, we will define interdependent sequences of trees $\<T^\alpha \mid \alpha < \zeta\>$, and of $\po_U$-names $\<\dot{Y}^\alpha \mid \alpha < \zeta\>$ of substuctures of $\A$.
    
    The sequence $\<\dot{Y}^\alpha \mid \alpha < \zeta\>$ will be (forced to be) internally approachable, and each structure $\name{Y}^\alpha$ will be of size smaller than $\zeta < \kappa_N$. The first structure
    $\dot{Y}^0$ is taken to be $\dot{Y}^0 = \Sk^{\dot{\mathfrak{A}}}(\{\dot{U}_n,\dot{\kappa}_n: n < \omega\})$. In particular, each $\name{Y}^\alpha$ is forced to contain the generic Prikry sequence $\vec{\kappa}$. 
    In the course of the construction, we will make use of $\po_U$-names $\name{f}^\alpha_n$ of auxiliary functions 
    associated with the structure $Y^\alpha$, so that
    $f^\alpha_n: \delta_\alpha \times \finsubsets{\kappa_n^{+(\tau^n_{\ell_n})}} \to \kappa_n^{+(\tau^n_{\ell_n})}$ amalgamates all functions $f \in Y^\alpha$ of the form
    $f : \finsubsets{\kappa_n^{+(\tau^n_{\ell_n})}} \to \kappa_n^{+(\tau^n_{\ell_n})}$, in the sense that these functions appear as $f^\alpha_n \uhr \left(\{ \delta\} \times \finsubsets{\kappa_n^{+(\tau^n_{\ell_n})}} \right)$,  $\delta < \delta_\alpha$. 
    Since $|Y^\alpha| < \zeta$, $\delta_\alpha <\zeta$ as well. 
    The precise form of amalgamation will not be important for us. We only require that the functions naturally extend each other when moving from  $\alpha$ to a bigger $\beta < \zeta$. Namely, for every $n < \omega$, 
    $f^\beta_n \uhr \dom(f^\alpha_n) = f^\alpha_n$.
    We proceed to describe the construction.
    \begin{itemize}
        \item Let $T^0 := T$ and $(t,T) \forces \dot{Y}^0 = \Sk^{\dot{\mathfrak{A}}}(\{\dot{U}_n,\dot{\kappa}_n: n < \omega\})$;
        \item $(t,T^\beta) \dleq (t, T^\alpha)$ for all $\alpha \leq \beta < \zeta$;
        \item $(t,T^\alpha) \forces \dot{Y}^\alpha \prec \dot{\mathfrak{A}}$ for every $\alpha$.
    \end{itemize}
    
    Let us assume we have already constructed $T^\alpha$ and $\dot{Y}^\alpha$. 
    
    Let $(t,T^{\alpha + 1})$ be a direct extension of $(t,T^\alpha)$ that decides the sequence $\<\dot{f}^\alpha_n : n < \omega\>$ in the sense of 
    Corollary \ref{Cor:PrikryNameGuessing},
    i.e. there is a sequence $\<f^\alpha_s:  s \in T^{\alpha + 1}\>$ such that for every $s \in T^{\alpha + 1}$, we have $(s,T^{\alpha + 1}_s) \forces \dot{f}^\alpha_n = \check{f}^\alpha_s$.
    
    Fix now some $s \in T^{\alpha + 1}$.
    For each $\beta < \delta_\alpha$, define a function 
    $$g^\beta_s: [\pi^{-1}_s(\suc_{T^{\alpha + 1}}(s))]^{<\omega} \to \kappa$$ by 
    $$b \mapsto f^\alpha_{s\concat\<\pi_s(b)\>}(\beta,b).$$
    Note that this definition does not depend on $\alpha$ by the agreement between the $f^\alpha_s$'s. 
    Working with sets in $U_s$ and applying Corollary \ref{genclosedrem},
    we first find a set $A^\alpha_s \in U_s$ so that for each $b \in A^\alpha_s$, $b(i) < (\pi_s(b))^{+\tau^s_i}$, and for each $\beta < \delta_\alpha$ either 
    $$\forall b \in A^\alpha_s.\  g^\beta_s(b) \geq \pi_s(b) \text{ or }$$
   
    $$g^\beta_s \restr A^\alpha_s \text{ is constant. }$$ 
    
    We then shrink each $A^\alpha_s$ to some
    $B^\alpha_s \in U_s$
    so that for each $\beta < \delta_\alpha$, either 
    $$\forall b\in B^\alpha_s. \ g^\beta_s(b) \geq (\pi_s(b))^+  \text{ or }$$ 
  
    $$\forall b\in B^\alpha_s 
    g^\beta_s(b) < h^{\mu_{\beta_s}}(\pi_s(b))$$
    where $h^{\mu_{\beta_s}}$ is one of the canonical functions  fixed at the beginning of the proof. 
    
    
    We then refine further, using that $U_s$ is minimally generated (Definition \ref{Def:Generators}), $B^\alpha_s$ to $C^\alpha_s \in U_s$ with the property that for all 
    $\beta < \delta_\alpha$,
    $b \in C^\alpha_s$, $i \leq k_s$, and $a \in [b(i)]^{<\omega}$,
    either
    $$f^\alpha_{s\concat\<\pi_s(b)\>}(\beta,a \cup (b \downharpoonright (i + 1)) < b(i), \text{ or }
    $$
 
    $$f^\alpha_{s\concat\<\pi_s(b)\>}(\beta,a \cup (b \downharpoonright (i + 1)) \geq \pi_s(b)^{+\tau^s_i}.
    $$
    
    We then pick $\dot{Y}^{\alpha + 1}$ to be a $\po_U$-name so that $(t,T^{\alpha + 1})$ forces 
    $$\dot{Y}^{\alpha + 1} = \Sk^{\dot{\mathfrak{A}}}\left(\check{\delta_\alpha} \cup \{\dot{Y}^\alpha,\<\check{g}^\beta_s: s \in T^{\alpha + 1}\>,\<\check{h}^{\mu_{\beta_s}} :  s \in T^{\alpha + 1}\>\}\right).$$
    
    At limit stages $\gamma < \zeta$, we take $T^\gamma = \bigcap\limits_{\alpha < \gamma} T^\alpha$ and let $\dot{Y}^\gamma$ be a name for the union of $\dot{Y}^\alpha$ for $\alpha < \gamma$. Similarly, after having constructed the full sequence $\<T^\alpha: \alpha < \zeta\>$ and $\<\dot{Y}^\alpha: \alpha < \zeta\>$ we let $T^\zeta = \bigcap\limits_{\alpha < \zeta} T^\alpha$ and $\dot{Y}$ a name for the union of the $\dot{Y}^\alpha$'s.
    
    This concludes the construction of the two sequences. 
    We make a final direct extension using the sets $C^\alpha_s \in U_s$, $\alpha < \zeta$, that were  introduced in the construction.  
    Let $C_s := \bigcap\limits_{\alpha < \zeta} C^\alpha_s$. We can then find a direct extension $(t,T^*)$ of $(t,T^\zeta)$ with the core property that for every $s \in T^*$, $\suc_{T^*}(s) \subseteq {\pi_s}[{C_s}]$.
    
    Let $\vec{\kappa}$ be a generic sequence with respect to a filter that contains $(t,T^*)$. 
    In $V[\vec{\kappa}]$
    let $Y = \bigcup_{\alpha < \zeta}Y^\alpha$.
    Define sets $C_n := C_{\vec{\kappa} \restr n}$. 
    For each $\kappa_n$ there must be some $b_n \in C_n$ such that $\pi_{\vec{\kappa}\uhr n}(b_n) = \kappa_n$.
    For each $b_n$ we pick sequences $\<\beta^{n,i}_\gamma: i \leq \ell_n, \gamma < \eta^n_i\>$ such that $\<\beta^{n,i}_\gamma: \gamma < \eta^n_i\>$ is cofinal in $b_n(i)$. 
    We may assume that each sequence $\<\beta^{n,i}_\gamma: \gamma < \eta^n_i\>$ is definable from $b_n(i)$ over $Y$.  We may also require that each $\beta^{n_i}_\alpha$ defines its initial segment $\<\beta^{n,i}_\gamma : \gamma < \alpha\>$ over $Y$. This can be achieved by taking $\beta^{n,i}_\alpha$ to be the rank in our fixed well-order of $\<(\beta^{n,i}_\gamma)': \gamma < \alpha\>$ for a different sequence $\<(\beta^{n,i}_\gamma)': \gamma < \eta^n_i\>$ definable from $b_n(i)$ over $Y$.
    Let $N < \omega$ be minimal such that $\kappa_N \geq \zeta$. Define 
    $$X := \Sk^\mathfrak{A}(Y \cup \{\beta^{n,i}_\gamma: \gamma < \eta^n_i, i \leq \ell_n, N \leq n < \omega\}).$$
    It is clear from the construction of $Y$ that $X$ contains $\zeta$, and therefore satisfies item (i) from the statement of the Lemma. \\
    
    \noindent
    To show that item (ii) holds, we prove that $\sup(X \cap \kappa^{+(\tau^n_i)}_n) = b_n(i) \in S^n_i$ for all $n \geq N$ and $i \leq \ell_n$. It should be clear that the left side is at least $b_n(i)$ by our choice of $\<\beta^{n,i}_\gamma: \gamma < \eta^n_i\>$. 
    Let then $f:\finsubsets{\kappa} \rightarrow \kappa$ be in $Y$ and $r \subset \{\beta^{n,i}_\gamma: \gamma < \eta^n_i, i \leq \ell_n, N \leq n < \omega\}$ such that $f(r) < \kappa^{+(\tau^m_j)}_m$ for some $N \leq m < \omega$ and $j \leq \ell_m$.
    We start with reducing $r$ to a simpler form. First, we may assume that  $r$ does not contain ordinals below $\kappa_m^{+(\tau^m_j - 1)}$, since similarly to the argument of Lemma \ref{thatonelemma}, such ordinals may be suppressed in exchange of replacing $f$ with the function taking $f$ supremums over all ordinals below $\kappa_m^{+(\tau^m_j - 1)}$. 
    Second, we can remove all values from $r$ that are above $\kappa_m^{+(\tau^m_{\ell_m})}$. This follows from the fact that 
    points $\beta \in r \setminus \kappa_m^{+(\tau^m_{\ell_m})}$, are definable from $b_n$ for some $n > m$, and that $b_n$ are taken from the sets $\bigcap_{\alpha < \zeta} A^\alpha_n$. The choice of the sets $A^\alpha_n$ guarantees that $b_n$ end extends $Y$ below $\kappa_m^{+\tau^m_{\ell_m}}$. By Lemma \ref{dblendext}, they also end-extend 
     $\Sk^\mathfrak{A}(Y \cup \{\beta^{k,i}_\gamma: \gamma < \eta^k_i, i \leq \ell_k, k\leq m\})$.
    Having strengthened the assumptions about $r$, it remains to check that for every $j \leq \ell_m$, $r \subset \{\beta^{m,i}_\gamma: \gamma < \eta^n_i, j \leq i \leq \ell_m\}$, and $f \in X$, 
    if $f(r) < \kappa^{+(\tau^m_j)}_m$ then $f(r) < b_m(j)$.
    As $f \in Y$ by construction we have that $f(r) = f_n^\alpha(\delta,r)$ for some $\alpha < \zeta$ and $\delta < \delta_\alpha$. As $b_m \in C_m$ we have $f(r) < b_m(j)$ as wanted. This concludes the verification of item (ii).\\
    
    \noindent 
    Moving to item (iii), we will see that for any  $N \leq m < \omega$, $0 \leq j \leq \ell_m$, and  $\kappa^* \in X \cap (\kappa_m^{+\tau^m_{j-1}},\kappa_m{+\tau^m_{j}})$  is regular,\footnote{Recall that $\tau^m_{-1} = 1$, so the minimal possible $\kappa^*$ value is $\kappa_m^{++}$.} then 
    $\cof(X \cap \kappa^*) \in \{\zeta\} \cup \{ \eta^m_{i} : j \leq i \leq \ell_m\}.$
    Pick such $j$ and $\kappa^*$. By the same reduction argument given above, we have that $$\sup(X \cap \kappa^*) = \sup\left(\kappa^* \cap \Sk^\mathfrak{A} \left(Y \cup \{\beta^{m,i}_\gamma: \gamma < \eta_i^m, i \leq j \leq \ell_m \}\right)\right).$$
    
    For each $s \in \prod\limits_{j \leq i \leq \ell_m} \eta^m_i$, let  $X^s_\gamma := \Sk^\mathfrak{A}(Y \cup \{\beta^{m,i}_{s(i)}: i \leq j \leq \ell_m\})$. It is a limit of an internally approachable sequence of length $\zeta$. Therefore, if for some $s \in \prod\limits_{j \leq i \leq \ell_m} \eta^m_i$, $X^s_\gamma$ is cofinal in $\sup(X \cap \kappa^*)$ then by Lemma \ref{IASkolemhulls},
    $\cof(\sup(X \cap \kappa^*)) = \zeta$. 
    On the other hand, if no
    $X^s_\gamma$ is cofinal in
    $\sup(X \cap \kappa^*)$, then the latter is the supremum of the directed system $\<\sup(X^s_\gamma \cap \kappa^*) : s \in \prod\limits_{j \leq i \leq \ell_m} \eta^m_i\>$ which does not stabilize. Therefore, $\cof(\sup(X \cap \kappa^*)) = \eta^m_i$ for some $j \leq i \leq \ell_m$.\\
    
    \noindent
    We finally verify item $(iv)$, by splitting into two subcases: $\kappa^* \in ( \kappa^{+(\tau^m_{k_m})}_m, \kappa_{m}]$ and $\kappa^* = \kappa_{m}^+$.
    For the first sub-case, the end extension property of the points in $b_n$, $n \geq m+1$ implies that $\kappa^* \in \Sk^\mathfrak{A}(Y \cup \{\beta^{n,i}_\gamma: \gamma < \eta^n_i, i \leq \ell_n , N \leq n \leq m\})$. Therefore $\cof(X \cap \kappa^*) = \zeta$ by Lemma \ref{finitevariationII}.
    For the second sub-case, 
    we note that if  $\gamma \in X$ is less than $\kappa^+_m$, then it is of the form $g^\beta_{\vec{\kappa} \restr n}(b_n)$ for some $\beta < \zeta$. 
    By choice of $b_n$ to be taken from $\bigcap_{\zeta < \alpha} B_{\vec{\kappa}\uhr n}^\alpha$, 
    and definition of the sets $B_s^\alpha$,  we must then have that 
    $$g^\beta_{\vec{\kappa} \uhr n}(b_n) < 
    h^{\mu_{\beta_{\vec{\kappa}\uhr n}}}_{\vec{\kappa} \uhr n}(\kappa_n).$$
    
    By the construction of the sequence 
    $\la Y^\alpha \mid \alpha < \zeta\ra$, 
    both $h^{\mu_{\beta_{\vec{\kappa}\uhr n}}}$ and $\kappa_n$ belong to $Y$, therefore by Lemma \ref{finitevariationII},
    $\sup(X \cap \kappa_m^+) = \sup(Y \cap \kappa_m^+) \in \cof( \zeta)$. 
    
    This concludes the proof of Lemma \ref{Lemma:MainLemmaA}, and in turn, of Proposition \ref{Prop:CapturingMain}.
\end{proof}

\begin{remark}\label{Remark:Dodd-Solidity}
    It should be possible to eliminate the ambiguity for regular $\kappa^* \in \left(\kappa^{+(\tau^m_j)}_m,\kappa^{+(\tau^m_{(j + 1)})}_m\right)$. The only way we see to do this involves making extra assumptions on the filters $U_n$. 
    
    If the filter $U_n$ satisfies Dodd-Solidity, i.e. the $\beta \cup b \downharpoonright (j + 2)$-extender derived from $\iota_{U_n}$ is in the ultrapower for every $\beta < b(j + 1)$, then we can define a function that will send $\beta$ to the least $(\{\beta\} \cup b \downharpoonright (j + 2))$-generator below $\kappa^{+\tau'}$ where $\tau'$ is such that $\kappa^{+\tau'}_m = \kappa'$. This function applied to the $\beta^{n,(j + 1)}_\gamma$'s will then be cofinal in $\sup(Y \cap \kappa')$.
\end{remark}

\section{Rudin-Keisler Capturing with Posets}\label{Section:Main} 
In this section we prove theorems \ref{Theorem:WoodinQuestion} and 
\ref{Theorem:SecondMain}. 
by following the proofs of Proposition \ref{Prop:CapturingMain}, with the additional complication having to additionally force with Levy collapse posets between the generic Prikry points. 
To deal with the new complication, we introduce a modification of the Rudin-Keilser capturing property. 

\begin{definition}
     Let $U$ be a normal ultrafilter on $\kappa$, $\qo$ be a forcing notion of size at most $\kappa^+$ in $\ult(V;U)$, and $q \in \partord$ some condition.
     \begin{itemize}
         \item[$(a)$] $\sigma \in V^\qo$ is a $q$-name for a stationary tuple of type $\<(\tau_0,\eta_0),\ldots,(\tau_n,\eta_n)\>$ for positive ordinals $\tau_i$, $i \leq n$, and regular cardinals $\eta_i$, $i \leq n$,
         iff $q$ forces that $\sigma$ is a stationary tuple of type $\<(\check{\tau}_0,\check{\eta}_0),\ldots,(\check{\tau}_n,\check{\eta}_n)\>$. We will usually write such $\sigma$ as a sequence of names $\<\dot{S}_0,\ldots,\dot{S}_n\>$ for its components.
         \item[$(b)$] A function $f: \kappa \rightarrow H_\kappa$ is a $(U,q)$-sequence of stationary tuples of type $\<(\tau_0,\eta_0),\ldots,(\tau_n,\eta_n)\>$ iff $f(\alpha)$ is a $f_q(\alpha)$-name for a stationary tuple of type $\<(\tau_0,\eta_0),\ldots,(\tau_n,\eta_n)\>$ for $U$-almost all $\alpha$. Here $f_q$ is some function representing $q$ in $\ult(V:U)$.
         \item[$(c)$] A $(U,q)$-sequence $f$ of stationary tuples is trivial of type $\<(\tau_0,\eta_0),\ldots,(\tau_n,\eta_n)\>$ iff $f(\alpha)(i) $ is the standard name for the set $\cof(\eta_i) \cap \alpha^{+\tau_i}$ for all $i \leq n$ and $U$-almost all $\alpha$.
    \end{itemize} 
\end{definition}

\begin{definition}
    Let $U$ be a normal ultrafilter on $\kappa$.
    \begin{itemize}
        \item[$(a)$] Let $\qo$ be a forcing notion of size at most $\kappa^+$ in $\ult(V;U)$, $q\in \qo$, and $f$ be some $(U,q)$-sequence of stationary tuples. $(q,f)$ is generically Rudin-Keisler captured (relative to $U$) iff the set of extensions $q' \leq q$ such that there exists a minimally generated ultrafilter $U \leq_{RK} V$ with the property that $q' \forces \check{[id]}_V \in \prod \iota_V(f)(\kappa)$ is dense below $q$.
        \item[$(b)$] We say full generic Rudin-Keisler capturing holds up to $\tau$ (relative to $U$) iff for any $\qo \subset H_{\kappa^+}$ of size at most $\kappa^+$ in $\ult(V:U)$, any $q \in \partord$ and a $(U,q)$-sequence of stationary tuples $f$ of type $\<(\tau_0,\eta_0),\ldots,(\tau_n,\eta_n)\>$ where $1 < \tau_0$ and $\tau_n \leq \tau$, the pair $(q,f)$ is generically Rudin-Keisler captured (relative to $U$).
    \end{itemize}
\end{definition}

\begin{lemma}\label{GenRKcaptcons}
   For any normal ultrafilter $U$ on $\kappa$, full Rudin-Keisler capturing up to $\tau$ relative to $U$ implies full generic Rudin-Keisler capturing relative to $U$ up to $\tau$.
\end{lemma}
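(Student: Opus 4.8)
The plan is to reduce the generic statement to the non-generic one by passing to the ultrapower $M_U=\ult(V,U)$, where a $(U,q)$-sequence $f$ of stationary tuples becomes, via \L o\'s, a single $\mathbb{Q}$-name. Writing $\mathbb{Q}=[\alpha\mapsto\mathbb{Q}_\alpha]_U$ and $q=[\alpha\mapsto q_\alpha]_U$, the object $\dot{T}:=\iota_U(f)(\kappa)=[f]_U$ is a $\mathbb{Q}$-name in $M_U$, and $q$ forces (over $M_U$) that $\dot T=\langle\dot T_0,\dots,\dot T_n\rangle$ is a stationary tuple over $\kappa$ of type $\langle(\tau_i,\eta_i)\rangle_{i\le n}$. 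Since every minimally generated $W\geq_{RK}U$ comes with a factor embedding $k:M_U\to M_W$ with $\iota_W=k\circ\iota_U$, we have $\iota_W(f)(\kappa)=k(\dot T)$; thus, unwinding the definition, a condition $q'\le q$ witnesses generic capturing precisely when $k(q')\Vdash^{M_W}\check{[\mathrm{id}]}_W\in\prod k(\dot T)$. Writing $[\mathrm{id}]_W=a=\langle a_0,\dots,a_n\rangle$, elementarity of $k$ turns this into the requirement that, for the \emph{decided sets} $\tilde T_i^{q'}:=\{\xi<\kappa^{+\tau_i}: q'\Vdash_{\mathbb{Q}}^{M_U}\check\xi\in\dot T_i\}$, one has $a_i\in k(\tilde T_i^{q'})$ for every $i\le n$.

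Everything then reduces to producing, densely below $q$, a condition $q'$ whose decided sets $\langle\tilde T_i^{q'}\rangle_{i\le n}$ form a genuine stationary tuple over $\kappa$ of some type $\langle(\tau_i,\eta_i')\rangle$ with $\eta_i'<\kappa$. Once this is achieved, I would apply the hypothesised full RK capturing (up to $\tau$) to a $U$-sequence $g$ with $[g]_U=\langle\tilde T_i^{q'}\rangle$ to obtain a minimally generated $W\geq_{RK}U$ with $a=[\mathrm{id}]_W\in\prod\iota_W(g)(\kappa)=\prod k(\tilde T_i^{q'})$, i.e. $a_i\in k(\tilde T_i^{q'})$ for all $i$, which is exactly what was needed. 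Here $\tau_0>1$ guarantees $\kappa^{+\tau_i}\ge\kappa^{++}$, so the hypothesis genuinely applies.

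The construction of $q'$ is the heart of the argument, and its main subtlety is \emph{simultaneity}: for each single coordinate one can decide a stationary chunk of $\dot T_i$, but the witnessing conditions for different $i$ need not be compatible. I would resolve this with a density/openness argument exploiting the smallness of $\mathbb{Q}$. Fix $q_0\le q$. For a fixed $i$, the possible-values set $\hat T_i^{q_0}:=\bigcup_{r\le q_0}\tilde T_i^{r}$ is stationary in $\kappa^{+\tau_i}$, since any $M_U$-club disjoint from it would remain a club in a generic extension by $q_0$ and miss the realised, stationary $\dot T_i$. As $|\mathbb{Q}|\le\kappa^+<\kappa^{++}\le\kappa^{+\tau_i}$ and the nonstationary ideal on the regular cardinal $\kappa^{+\tau_i}$ is $\kappa^{+\tau_i}$-complete, this union of at most $\kappa^+$ sets must have a stationary member; hence $D_i:=\{r\le q_0:\tilde T_i^{r}\text{ is stationary}\}$ is dense below $q_0$. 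Because strengthening a condition can only enlarge its decided set, each $D_i$ is also downward closed. Since there are only finitely many coordinates, $\bigcap_{i\le n}D_i$ is open dense below $q_0$, and any $q'$ in it makes all $\tilde T_i^{q'}$ simultaneously stationary; ranging over $q_0\le q$ yields density of the witnesses below $q$.

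The remaining --- and, in my view, most delicate --- point is the cofinality bookkeeping needed to make $\langle\tilde T_i^{q'}\rangle$ an honest tuple of a type with $\eta_i'<\kappa$: the decided sets live inside $\cof(\eta_i)$ as computed in the $\mathbb{Q}$-extension, while the capturing hypothesis speaks about ground-model cofinalities below $\kappa$. I would handle this by intersecting each $\tilde T_i^{r}$ with a fixed ground-model class $\cof^{M_U}(\eta_i')$, $\eta_i'<\kappa$, and re-running the additivity argument on the refined sets; the existence of a regular $\eta_i'<\kappa$ carrying stationary mass is where the smallness (indeed, the closure) of $\mathbb{Q}$ is essential, as it prevents the forcing from pushing all of $\dot T_i$ onto points of $M_U$-cofinality $\ge\kappa$. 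I expect this cofinality-preservation step, rather than the simultaneity argument, to be the principal obstacle.
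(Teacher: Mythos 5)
Your proposal is correct and follows essentially the same route as the paper's proof: pass to the ultrapower by $U$, use $|\mathbb{Q}|\le\kappa^{+}<\kappa^{+\tau_i}$ together with the completeness of the nonstationary ideal on $\kappa^{+\tau_i}$ to find, densely below $q$, a single $q'$ whose decided sets $S_i^{q'}$ are simultaneously stationary, and then apply the non-generic capturing hypothesis to that tuple. The paper's version is terser --- it does not spell out the simultaneity over $i$ or the density below $q$, which your downward-closure observation handles cleanly --- and it likewise ignores your final cofinality worry, which is harmless here because the desired conclusion $q'\Vdash\check{[\mathrm{id}]}_{V}\in\prod\iota_{V}(f)(\kappa)$ only needs the decided sets to be stationary, not typed (the generator-finding argument of Lemma \ref{RKcaptcons} makes no use of the cofinality data).
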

\begin{proof}
Let $\<(\tau_0,\eta_0),\ldots,(\tau_n,\eta_n)\>$ be a type with $1 < \tau_0$ and $\tau_n \leq \tau$. 
Let $\qo \in \Ult(V;U)$ be a poset contained in $H_{\kappa^+}$, $q \in \qo$ and
$f$, a $(U,q)$-sequence of stationary tuples. 
For each $i \leq n$ let $\name{S_i} = j_U(f)(\kappa)(i)$. $\name{S_i}$ is a $\qo$-name of a stationary subset of $\kappa^{+\tau_i}$.

Since each $\tau_i > 1$ and $|\qo|^{\Ult(V;U)} \leq \kappa^+$, there must be some $q' \leq q$ such that the set $S_i^{q'} = \{ \alpha< \kappa^{+\tau_i} \mid q' \Vdash \check{\alpha} \in \name{S_i}\}$ is stationary. 
Taking some minimally generated ultrafilter $V \geq_{RK} U$ which captures $S_i^{q'}$, we have that $q' \Vdash [id]_V \in j_V(f)(\kappa)$. 
\end{proof}

The poset $\qo$ for which Ruidn-Keisler capturing will be used are finite produces of Levy-collapse posets. 

\begin{definition}
    Let $s = \la \rho_0,\dots,\rho_{n}\ra$ be an increasing sequence of regular cardinals and $\vec{\tau} = \la \tau(0) \dots,\tau(n-1)\ra$ a sequence of successor ordinals below $\rho_0$. Define the poset $\qo^{\vec{\tau}}_s$ by 
    $$\qo^{\vec{\tau}}_s = \col(\omega_1,\rho_0^+) \times \prod_{i < n} \col(\rho_i^{+(\tau(i) + 1)},\rho_{i+1}^+).$$
    Therefore, conditions $q \in \qo^{\vec{\tau}}_s$ are finite sequence 
    $q = \la q_i \mid i \leq n\ra$ with 
    $q_0 \in \col(\omega_1,\rho_0^+)$ and for $i > 0$, $q_i \in \col(\rho_{i-1}^{+(\tau(i-1) + 1)},\rho_i^{+})$. 
    Whenever there is no risk of confusion, we omit the superscript $\vec{\tau}$ and write $\qo_s$ for $\qo_s^{\vec{\tau}}$.
\end{definition}

It is clear that if a sequence $s'$ end extends $s$ then $\qo_{s'}$ projects to $\qo_s$ with the projection map $q' \in \qo_{s'} \mapsto q'\uhr |s|$. 

\begin{definition}\label{barP_U}
    Let $\vec{\tau} = \la \tau(k) \mid k < \omega\ra$ be a sequence of successor ordinals below $\kappa$. 
    Let $\bar{\po}^{\vec{\tau}}_U$ be the Prikry forcing by $U$ with interleaved collapses.
    Conditions $p \in \bar{\po}_U$ are of the form $p = \la s,q,T,G\ra$ where
    \begin{itemize}
        \item $\la s,T\ra \in \po_U$ (See Definition \ref{Def:P_U}),
        \item $q \in \qo^{\vec{\tau}\uhr|s|}_s$,\footnote{In the case $s = \emptyset$, $\qo_\emptyset$ is the trivial forcing.}
        \item $G : T_{>s} \to V_\kappa$ sends a node $t' = t \fr \la \nu\ra \in T_{>s}$ to $G(t') \in \col(\max(t)^{+(\tau(|t|) + 1)},\nu^+)$. 
            
        
    \end{itemize}
    
    We define the order relations of $\bar{\po}^{\vec{\tau}}_U$.
    \begin{enumerate}
        \item Given $t \in T$ of the form $t = s \fr \la \nu_1,\dots,\nu_k\ra$, the end-extension of $p$ by $t$, denoted $p \fr t$ is the condition $p \fr t = \la t, q \fr (t;G), T_{>t}, G\uhr  T_{>t}\ra$, where 
        $$q \fr (t;G) := q \fr \la G(s \fr \la \nu_1 \ra),G(s \fr \la \nu_1,\nu_2\ra),\dots,G(s \fr \la\nu_1,\dots,\nu_k\ra)$$ 
    
        \item   A condition $p' = \la s',T',q',G'\ra$ is a direct extension of $p$ (written $p' \leq^* p$) iff it satisfies the following conditions:
        \begin{enumerate}
        \item $(s',T') \geq^* (s,T)$ as conditions in $\po_U$, 
        \item  $q' \leq_{\qo_s} q$, and 
        \item  for every $t' = t \fr \la \nu\ra \in T'$, $G'(t') \leq G(t')$ as conditions in $\col(\max(t)^{(\tau(|t|) + 1)},\nu^+).$
                \end{enumerate}

        \item  We say that $p'$ extends $p$ ($p' \leq p$) if $p'$ is obtained from $p$ by a finite sequence of end extensions and direct extensions. 
    \end{enumerate}
\end{definition}

As with the posets $\qo_s^{\vec{\tau}}$, when the identity of the sequence $\vec{\tau}$ is fixed, and there is no risk of confusion, we  suppress the superscript $\vec{\tau}$ and write $\bar{\po}_U$ for $\bar{\po}^{\vec{\tau}}_U$.

\begin{remark}\label{Remark:P_U-factoring}
    Let $p = \la s,q,T,G\ra \in \bar{\po}_U$.
    The forcing $\bar{\po}_U/p$ naturally breaks into the product $\qo_s \times \po_U/ (p\downharpoonright s)$, 
    where $p\downharpoonright s$ is given by $\la \emptyset,\emptyset,T\downharpoonright s,G\uhr (T\downharpoonright s)\ra$, 
    with $T\downharpoonright s = \{ r \in [\kappa]^{<\omega} \mid s \fr r \in T\}$. 
    This decomposition is useful since 
    the direct extension order of $\bar{\po}_U/(p\downharpoonright s)$ is $\max(s)^+$-closed. 
\end{remark}

The following is a standard extension of the basic properties of $\po_U$ to $\bar{\po}_U$.
 (see  \cite{GitikHB}).

\begin{lemma}${}$
     \begin{enumerate}
         \item (Prikry Property) for every $p \in \bar{\po}_U$ and a statement $\varphi$ in the forcing language, there is a $p^* \leq^* p$ which decides $\varphi$.
         
         \item $\bar{\po}_U$ is $\kappa^{++}$.c.c and does not collapse $\kappa^+$
         
         \item If $H \subseteq \bar{\po}_U$ is a generic filter and 
         $$\vec{\kappa} = \la \kappa_n\ra_n = \bigcup\{ s \mid \exists q,T,G.\ (s,q,T,G) \in G\}$$ then in $V[H]$, all cardinals between $\omega_1$ and $\kappa$ are collapsed, except those in the intervals $[\kappa_n^{++},\kappa_n^{+(\tau(n) + 1)}]$, $n < \omega$. 
      \end{enumerate}
\end{lemma}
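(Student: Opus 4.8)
The three items form the standard package of properties for Prikry forcing with interleaved collapses, and I would establish them in the order: the chain condition, then the Prikry property, then preservation of $\kappa^+$, then the cardinal structure, since each later item leans on the earlier ones. First I would dispose of the chain condition cheaply: under GCH every condition $\la s,q,T,G\ra$ is coded by a subset of $\kappa$, so $|\bar{\po}_U| = 2^\kappa = \kappa^+$; hence every antichain has size at most $\kappa^+ < \kappa^{++}$ and the $\kappa^{++}$-c.c.\ is immediate, giving preservation of all cardinals $\geq \kappa^{++}$. The engine for everything else is the factorization of Remark \ref{Remark:P_U-factoring}: below any $p = \la s,q,T,G\ra$ one has $\bar{\po}_U/p \cong \qo_s \times \bar{\po}_U/(p\downharpoonright s)$, where the first factor is a genuine forcing of size $<\kappa$ and the direct-extension order of the second factor is $\max(s)^+$-closed, because every collapse appearing in $G$ above the stem $s$ collapses into an ordinal $\geq \max(s)^{+(\tau(|s|)+1)} > \max(s)^+$ and so is $\max(s)^+$-closed.

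For the Prikry property I would run the usual fusion, separating the ``vertical'' and ``horizontal'' directions. Fixing $p$ and a statement $\varphi$, the factoring reduces matters to deciding $\varphi$ by strengthening the small lower part $\qo_s$ together with direct extensions of the pure upper part. Along the tree the $\kappa$-completeness and normality of $U$ let me homogenize each successor set $\suc_T(t)$ so that the decision type of $p\fr t$ (forces $\varphi$, forces $\neg\varphi$, or neither) is constant on a $U$-large set of one-step extensions; transversally, the high closure of each collapse $\col(\max(t)^{+(\tau(|t|)+1)},\nu^+)$ lets me settle the collapse part of $\varphi$ by directly extending the relevant $G$-values. I would then assemble these into a single $p^*\leq^* p$ by a diagonal fusion up the tree, shrinking $T$ and strengthening $G$ level by level, and close with the standard argument that if no direct extension of $p^*$ decides $\varphi$ then walking down the homogenized tree yields a contradiction. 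I would cite \cite{GitikHB} for the routine bookkeeping.

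For preservation of $\kappa^+$, the smallness of $\qo_s$ reduces the question to the pure part $R := \bar{\po}_U/(p\downharpoonright s)$. Suppose toward a contradiction that a condition forces $\dot f\colon \kappa \to \kappa^+$ to be cofinal. Applying the Strong Prikry Lemma (in its $\bar{\po}_U$-form) to the dense open sets ``$\dot f(\alpha)$ is decided'', I get for each $\alpha<\kappa$ a direct extension below which the value of $\dot f(\alpha)$ is fixed by $p\fr t$ for $t$ of a prescribed length; since there are only $\kappa$ such stems, $\dot f(\alpha)$ has at most $\kappa$ possible values. A diagonal fusion over $\alpha<\kappa$ then produces one direct extension forcing the range of $\dot f$ into a set of size $\kappa$, contradicting cofinality in $\kappa^+$. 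For the cardinal structure I would simply read off the interleaved collapses: the accumulated lower parts of $H$ yield a filter on $\col(\omega_1,\kappa_0^+) \times \prod_{n}\col(\kappa_n^{+(\tau(n)+1)},\kappa_{n+1}^+)$, which a density argument (using the Prikry property to meet each dense set of the collapse product) shows to be generic. The factor $\col(\kappa_n^{+(\tau(n)+1)},\kappa_{n+1}^+)$ collapses exactly the interval $(\kappa_n^{+(\tau(n)+1)},\kappa_{n+1}^+]$ onto $\kappa_n^{+(\tau(n)+1)}$, and $\col(\omega_1,\kappa_0^+)$ collapses $(\omega_1,\kappa_0^+]$, while the $\kappa_n^{+(\tau(n)+1)}$-closure and $\kappa_{n+1}^{++}$-c.c.\ of this factor leave the blocks $[\kappa_m^{++},\kappa_m^{+(\tau(m)+1)}]$ untouched; assembling these gives precisely the claimed surviving intervals (with $\kappa$ itself preserved as a singular cardinal of cofinality $\omega$).

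I expect the main obstacle to lie in the diagonal fusions of the Prikry property and of the $\kappa^+$-preservation argument. Unlike plain Prikry forcing, the direct-extension order of $\bar{\po}_U$ is \emph{not} $\kappa$-closed — the bottom collapse $\col(\omega_1,\kappa_0^+)$ is only $\omega_1$-closed — so one cannot naively combine $\kappa$-many successive strengthenings of the collapse conditions. The fusion must instead be organized diagonally, exploiting that the collapse inserted above the $n$-th Prikry point is closed far above all earlier points, so that at each stage only boundedly many collapse coordinates (relative to the current closure threshold) need to be adjusted.
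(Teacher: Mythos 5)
Your outline is correct and is essentially the argument the paper intends: the paper gives no proof of this lemma at all, describing it as ``a standard extension of the basic properties of $\po_U$ to $\bar{\po}_U$'' and citing \cite{GitikHB}, and your sketch --- the cardinality bound $|\bar{\po}_U|=2^\kappa=\kappa^+$ for the chain condition, the factorization of Remark \ref{Remark:P_U-factoring} combined with a diagonal fusion for the Prikry property and for $\kappa^+$-preservation, and reading the surviving intervals off the interleaved collapse product --- is precisely that standard proof. You also correctly isolate the one genuine subtlety, namely that $\leq^*$ is not $\kappa$-closed, so the fusion must be staged along the tree using that the collapse attached above a node $t$ is $\max(t)^{+(\tau(|t|)+1)}$-closed and hence closed beyond the size $\max(t)^+$ of the lower part $\qo_t$.
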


\begin{lemma}(Strong Prikry Proper  $\bar{\po}_U$) For every condition $p = \la s,q,T,G\ra \in \bar{\po}_U$ and a dense open set $D \subseteq \bar{\po}_U$ there are $p^* = \la s,q^*,T^*,G^*\ra \leq^* p$ and $n < \omega$ such that for every $t \in T$ of length $|t| = n$, the set of conditions $q' \in \qo_t/(q \fr (t;G))$ for which the extension 
$\la t, q',T^*_{>t},G^*\uhr (T^*_{>t})\ra$ of $p \fr t$ belongs to $D$, is dense open in $\qo_t/(q \fr (t;G))$ 
\end{lemma}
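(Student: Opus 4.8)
The plan is to follow the template of the strong Prikry lemma for the pure forcing $\po_U$ stated above, but to thread the interleaved collapses through the argument using two tools: the Prikry property for $\bar{\po}_U$ (part (1) of the preceding lemma), which lets me decide any statement by a direct extension without adding Prikry points, and the product factoring of Remark \ref{Remark:P_U-factoring}. The latter is crucial: below $p \fr t$ the forcing splits as $\qo_t \times \bar{\po}_U/(p \fr t \downharpoonright t)$, where the \emph{tail} factor $\bar{\po}_U/(p \fr t \downharpoonright t)$ has a direct-extension order which is not merely $\max(t)^+$-closed but in fact $\max(t)^{+(\tau(|t|)+1)}$-closed (the least collapse appearing in the tail is $\col(\max(t)^{+(\tau(|t|)+1)},\cdot)$, and tree-shrinking is $\kappa$-closed by completeness of $U$); since $\tau(|t|)\ge 1$ this closure is at least $\max(t)^{++}$. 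As $|\qo_t| = \max(t)^+$, the tail is more closed than the collapse poset $\qo_t$ is large, and this single inequality is what powers the whole construction. I would first reduce to working above the stem $s$, so that all direct extensions I build touch only the tree $T_{>s}$ and the collapse map $G$.

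The local step is a dichotomy at each node $t \in T_{>s}$. For a collapse condition $q' \le q \fr (t;G)$ in $\qo_t$, apply the Prikry property to the statement $\psi$ asserting ``the generic filter meets $D$ through a condition whose Prikry stem is exactly $t$'' to obtain a tail direct extension deciding $\psi$; if $\psi$ is forced then stem-$t$ conditions of $D$ are dense below it (so some $q'' \le q'$ together with a shrunk tail lands literally in $D$), and if $\neg\psi$ is forced then no stem-$t$ direct extension reaches $D$ at all. Since there are only $\max(t)^+$ many conditions $q'$ and the tail order is $\ge \max(t)^{++}$-closed, I can run this over an enumeration of $\qo_t/(q\fr(t;G))$ along a $\le^*$-decreasing sequence of tail conditions, folding each shrinking into the next stage, and take a lower bound $(T^t,G^t)$. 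The key bookkeeping point is that I shrink the tail \emph{below} each witnessing tail, so that for every $q'$ marked ``good'', the literal condition $\la t,q'',T^t,G^t\ra$ lies in $D$ by downward closure (openness) of $D$. Call $t$ itself \emph{good} if the good $q'$ are dense in $\qo_t/(q\fr(t;G))$; otherwise some $q^\dagger$ witnesses that, below $\la t,q^\dagger,T^t,G^t\ra$, entering $D$ always requires a new Prikry point.

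I would then globalize. First, good nodes are unavoidable: if there were a tail condition all of whose reachable nodes are bad, then fusing it (via $\kappa$-completeness of $U$ for the tree and the tail closure for the collapses) yields a condition with no extension in $D$, contradicting density of $D$. Hence for every $t$ there is a least level $n_t$ by which $U$-almost every successor is good; by normality of $U$ I press this function down and diagonalize to extract a single uniform $n < \omega$. A final Prikry-style fusion — diagonal intersection of the trees $T^t$ and coordinatewise meet of the collapse maps $G^t$ over all nodes, which is exactly the standard fusion for $\bar{\po}_U$ recorded as routine above — produces $p^* = \la s,q^*,T^*,G^*\ra \le^* p$ such that at every $t \in T^*$ with $|t| = n$ the set $\{q' \in \qo_t/(q\fr(t;G)) : \la t,q',T^*_{>t},G^*\uhr(T^*_{>t})\ra \in D\}$ contains the dense set of good conditions and is open by openness of $D$, as required.

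The main obstacle is the tension I already flagged: entering a dense open set $D$ in a Prikry-type forcing generically needs new Prikry points, which would break the fixed level $n$, while fixing the level forces us to enter $D$ purely by strengthening collapses — and we must do so \emph{densely} in a collapse poset $\qo_t$ of size $\max(t)^+$ using a single fixed tail. Everything hinges on the product factoring of Remark \ref{Remark:P_U-factoring} decoupling $\qo_t$ from the tail, on the tail being $\max(t)^{++}$-closed so that all $\max(t)^+$ collapse conditions can be processed against one tail, and on folding the tree-shrinkings into that tail so that openness of $D$ upgrades ``some direct extension enters $D$'' to ``the literal fixed-tail condition enters $D$''. Verifying that this folding coheres simultaneously across all nodes during the global fusion, and that the level-minimization via $U$ does not destroy the per-node density, is the delicate bookkeeping that the proof must carry out.
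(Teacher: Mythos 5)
Your identification of the engine of the proof is right: by Remark \ref{Remark:P_U-factoring} and GCH, the direct-extension order of the tail above a node $t$ is at least $\max(t)^{++}$-closed while $|\qo_t| \leq \max(t)^+$, so all collapse conditions at a node can be processed against a single $\leq^*$-decreasing sequence of tails, and folding each witnessing tail into that sequence lets openness of $D$ upgrade ``some direct extension enters $D$'' to ``the fixed-tail condition enters $D$''. This is the mechanism the standard argument (the paper treats the lemma as standard, citing \cite{GitikHB}) runs on. One local repair: the dichotomy should be posed combinatorially rather than through the forcing statement $\psi$. From a tail condition forcing ``the generic filter meets $D$ at a stem-$t$ condition'' you cannot in general extract a stem-$t$ member of $D$ lying below that condition: a witness $d \in D \cap G$ with stem $t$ and the deciding condition need not admit a common extension with stem $t$, because their collapse assignments $G$ need not be compatible on a $U$-large subtree, and a condition with longer stem cannot be retracted to stem $t$. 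Ask directly, for each $q'$, whether some stem-$t$ direct extension of $\la t,q',T_{>t},G\ra$ lies in $D$; that is the dichotomy your construction actually uses.

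The more serious gap is in the globalization. A ``bad'' node $t$ is witnessed only by a \emph{single} $q^\dagger \in \qo_t$ below which $D$ is unreachable at stem $t$; the set $E_t$ of good collapse conditions may be nonempty without being dense. Hence ``every reachable node is bad'' does \emph{not} imply that the fused condition has no extension in $D$: an extension can still enter $D$ at a bad node through a collapse condition incompatible with that node's bad witness, so the appeal to density of $D$ does not yet produce a contradiction. Likewise ``good nodes are unavoidable'' does not by itself yield a level $n_t$ past which $U$-almost all extensions are good, and since $n_t$ is a function on finite sequences rather than on $\kappa$, normality and pressing down are not the relevant tools. What is needed is to thread the bad witnesses coherently through the tree: if $t$ is bad below $q^\dagger$, then for each successor $t \fr \la \nu \ra$ the non-density witness factors as $q''_\nu \fr r_\nu$ with $q''_\nu \leq q^\dagger$ in $\qo_t$ and $r_\nu$ in the new collapse coordinate; since $|\qo_t| < \kappa$, the $\kappa$-completeness of $U$ stabilizes $q''_\nu$ to a single $q''$ on a $U$-large set of $\nu$, and the $r_\nu$ are absorbed into $G$. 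Iterating this level by level and fusing turns a hypothetical failure at every level into a single condition every extension of which avoids $D$, which is the actual contradiction with density. This rank-and-threading argument is precisely the ``delicate bookkeeping'' you defer in your last sentence, but it is the substance of the proof rather than a verification to be checked afterwards.
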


With the strong Prikry property, it is straightforward to derive an analogue of Corollary \ref{Cor:PrikryNameGuessing}.
\begin{corollary}\label{Cor:barP_U-NameGuessing}
Suppose that $p \in \bar{\po}_U$ and $\name{x} =\la \name{x_n} \mid n < \omega\ra$ is a $\bar{\po}_U$-name for a sequence of sets $\name{x_n} \subseteq \kappa_n^{+\tau(n)}$. Then there are $p^* = \la s,q^*,T^*,G^*\ra \leq^* p$ and a function $F : T^* \to V_\kappa$, so that for every $t \in T^*$, $F(t)$ is a $\qo_t$-name and 
$$p^* \fr t \Vdash \name{x}_{|t|} = F(t).$$
\end{corollary}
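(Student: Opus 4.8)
The plan is to mirror the derivation of Corollary \ref{Cor:PrikryNameGuessing} from the Strong Prikry Lemma, the one new feature being that the value of $\name{x}_{|t|}$ now genuinely depends on the interleaved collapse below it, so it will be captured by a $\qo_t$-\emph{name} rather than by a ground-model object. The organizing observation is the factorization of Remark \ref{Remark:P_U-factoring}: after end-extending $p$ by a node $t$ we have $\bar{\po}_U/(p\fr t)\cong \qo_t\times \bar{\po}_U/((p\fr t)\downharpoonright t)$, and $\name{x}_{|t|}$ is forced to be a subset of $\kappa_{|t|}^{+\tau(|t|)}$, a cardinal that, by the very definition of $\bar{\po}_U$, lies strictly below the least cardinal $\max(t)^{+(\tau(|t|)+1)}$ collapsed (and below the least Prikry point added) by the tail factor $\bar{\po}_U/((p\fr t)\downharpoonright t)$.

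First I would handle a single coordinate at a single node. Fix $t$ and note that the direct-extension order of the tail factor is highly closed: its least interleaved collapse is $\col(\max(t)^{+(\tau(|t|)+1)},\cdot)$, so by the closure of the collapses together with the $\kappa$-closure of the tree part, this $\leq^*$-order is in fact $\max(t)^{+(\tau(|t|)+1)}$-closed (a strengthening of the $\max(t)^+$ recorded in Remark \ref{Remark:P_U-factoring}). Combined with the Prikry Property, this yields that the tail factor adds no new subset of $\kappa_{|t|}^{+\tau(|t|)}$: enumerating the $\kappa_{|t|}^{+\tau(|t|)}$-many pairs $(q',\xi)\in\qo_t\times\kappa_{|t|}^{+\tau(|t|)}$ and deciding each statement ``$q'\Vdash\check\xi\in\name{x}_{|t|}$'' by one $\leq^*$-step, I can amalgamate all of these decisions into a single tail direct extension, since $\kappa_{|t|}^{+\tau(|t|)}<\max(t)^{+(\tau(|t|)+1)}$. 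Reading off the decisions produces a $\qo_t$-name $F(t)$ with $p\fr t$ (shrunk on the tail) forcing $\name{x}_{|t|}=F(t)$. Equivalently, this is exactly the output of the Strong Prikry Property for $\bar{\po}_U$ applied to the dense open set of conditions deciding $\name{x}_{|t|}$, the ``level'' it produces being forced to equal $|t|$ precisely because the tail cannot move $\name{x}_{|t|}$.

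Next I would make this uniform across the tree and then across coordinates. At a fixed level $m$ there are at most $\kappa$ nodes $t$; since the relevant shrinkings at each node live in $U$, a diagonal-intersection (fusion) argument on the Prikry tree, using $\kappa$-completeness of $U$ and the $\kappa$-closure of the tree part of the $\leq^*$-order, produces a single direct extension $p^{(m)}$ treating every length-$m$ node simultaneously. I then build a $\leq^*$-decreasing $\omega$-chain $p=p^{(0)}\geq^* p^{(1)}\geq^*\cdots$, each $p^{(m)}$ refining all earlier ones, and take a lower bound $p^*=\la s,q^*,T^*,G^*\ra$ with $T^*=\bigcap_m T^{(m)}$, $G^*$ the pointwise meet of the collapse functions, and $q^*$ the meet of the stem's collapse parts; such a lower bound exists because the $\leq^*$-order is $\sigma$-closed (all collapses are at least $\omega_1$-closed and $U$ is $\kappa$-complete with $\kappa>\omega$). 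Setting $F(t):=F^{(|t|)}(t)$, now read relative to $T^*$, then gives the desired function.

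The step I expect to be the main obstacle is the coherence needed to fold the per-node, per-coordinate decisions into one condition, which rests on two monotonicity facts. First, shrinking the tail further when handling later coordinates must preserve $p^*\fr t\Vdash\name{x}_{|t|}=F(t)$; this is exactly the ``no new subsets of $\kappa_{|t|}^{+\tau(|t|)}$'' property, which makes the value of $\name{x}_{|t|}$ insensitive to the tail and hence stable under further direct extensions. Second, the amalgamation over the $\leq\kappa_{|t|}^{+\tau(|t|)}$-many membership questions must be legitimate, and this is precisely where the interleaved-collapse structure is indispensable: it is the collapse $\col(\max(t)^{+(\tau(|t|)+1)},\cdot)$ beginning just above $\kappa_{|t|}^{+\tau(|t|)}$ that supplies the closure needed to run the decision sequence. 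Everything else is the routine transfer of the fusion behind Corollary \ref{Cor:PrikryNameGuessing} to $\bar{\po}_U$.
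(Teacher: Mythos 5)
Your argument is correct and is essentially the derivation the paper intends: the paper leaves this corollary as a routine consequence of the Strong Prikry Property together with the closure of the direct-extension order of the tail, and your factorization-plus-closure fusion (deciding membership questions one at a time using that the first interleaved collapse above $\max(t)$ begins at $\max(t)^{+(\tau(|t|)+1)}$, then fusing across nodes and coordinates) is exactly that routine made explicit. One small caveat: the set of conditions deciding \emph{all} of $\name{x}_{|t|}$ at once is not dense (the collapses sitting below $\max(t)$ genuinely add subsets of $\kappa_{|t|}^{+\tau(|t|)}$), so your parenthetical ``equivalently'' reformulation should invoke the Strong Prikry Property coordinate-by-coordinate, for the dense open sets of conditions deciding a single statement $\check{\xi}\in\name{x}_{|t|}$ --- which is what your primary enumeration-and-amalgamation argument already does, with the resulting decisions holding only on a dense set of $q'\in\qo_t$ (this is precisely why $F(t)$ is a $\qo_t$-name rather than a single value).
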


\begin{proposition}\label{Prop:MainProp}
    Let $U$ be a normal measure on $\kappa$.
    Fix some sequence $\<\tau^*_i : i < \omega\>$ of successor ordinals below $\kappa$. Let $H \subset \bar{\partord}^{\<\tau^*_n: n < \omega\>}_U$ be generic over $V$ with $\<\kappa_n : n < \omega\>$ representing $H$'s Prikry component. If the full Rudin-Keisler capturing holds up to $\sup\limits_{n < \omega} \tau^*_i$, then in $V\left[H\right]$ 
    \begin{enumerate}
    \item  
     all cardinals between $\omega_1$ and $\kappa$ are collapsed, except those in the intervals $[\kappa_n^{++},\kappa_n^{+(\tau^*_n + 1)}]$, $n < \omega$. In particular, $\kappa = \aleph_{\bar{\tau}}$ where $\bar{\tau} = \sum_{n < \omega} (1 + \tau^*_n)$,
    
    \item For every $\eta < \kappa$ and infinite sequence of types $\la \mathrm{t}(n) \ra_{n <\omega}$ such that each $\mathrm{t}(n) = \la (\tau^n_0,\eta^n_0), \dots, (\tau^n_{\ell_n},\eta^n_{\ell_n})\ra$ with $1 < \tau^n_0$, $\tau^n_{\ell_n} \leq \tau^*_n$, and $\cup_{i<\ell_n} \eta^n_i < \eta$, 
    any sequence $\vec{S}= \la \vec{S}(n) \mid n < \omega\ra$  of finite sequences of stationary sets $S(n)  = \la S_n^i \mid i \leq \ell_n\ra$ of types $\mathrm{t}(n)$, respectively, is mutually stationary. 
    
    \end{enumerate}
\end{proposition}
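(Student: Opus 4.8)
The plan is to handle part (1) directly and then to reduce part (2) to a $\bar{\po}_U$-analogue of Lemma \ref{Lemma:MainLemmaA}. Part (1) is immediate from the structural lemma for $\bar{\po}_U$ recorded just above (the $\kappa^{++}$-c.c. lemma describing which cardinals survive): the surviving cardinals below $\kappa$ are $\omega_1$ together with the successors in each block $[\kappa_n^{++},\kappa_n^{+(\tau^*_n+1)}]$, and the identity $\kappa = \aleph_{\bar\tau}$ with $\bar\tau = \sum_{n<\omega}(1+\tau^*_n)$ is then a routine count of these blocks. For part (2), fixing a regular $\zeta$ with $\eta \leq \zeta < \kappa$, I would prove: for every $p = \la s,q,T,G\ra \in \bar{\po}_U$ and every $\bar{\po}_U$-name $\name{\mathfrak{A}}$ for an algebra, there is a direct extension $p^* \leq^* p$ forcing the existence of $X \elem \name{\mathfrak{A}}$ satisfying the four clauses (i)--(iv) of Lemma \ref{Lemma:MainLemmaA}, now with $\kappa_m^{+\tau^m_i}$ the surviving successors. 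As in Proposition \ref{Prop:CapturingMain}, the conjunction of these clauses --- in particular clause (ii), $\sup(X \cap \kappa_m^{+\tau^m_i}) \in \name{S}^i_m$ --- is exactly mutual stationarity of $\vec{S}$, so part (2) follows. Using Remark \ref{Remark:P_U-factoring} and Lemma \ref{finitevariationII} I may again assume $s = \emptyset$, dealing with the finitely many initial nodes by a final end-extension.

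The construction should follow the proof of Lemma \ref{Lemma:MainLemmaA} with three modifications forced by the interleaved collapses. First, I would apply Corollary \ref{Cor:barP_U-NameGuessing} in place of Corollary \ref{Cor:PrikryNameGuessing}: this only reduces each component of $\name{\vec{S}}(n)$ to a $\qo_t$-name $F(t)$, because the collapse factors remain live at each node. Consequently $g_s(\alpha) := F(s \concat \la \alpha \ra)$ is now a $(U,q_s)$-sequence of stationary tuples, where $q_s$ represents the collapse condition $G(s\concat\la\cdot\ra)$ on the single factor $\col((\max s)^{+(\tau^*_{|s|-1}+1)},\alpha^+)$ that is added by passing one further Prikry point; this factor has size $\kappa^+$ in $\ult(V;U)$, matching the hypothesis of generic capturing. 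Second, I would invoke Lemma \ref{GenRKcaptcons} (generic Rudin-Keisler capturing, available from the full capturing hypothesis) in place of plain capturing: densely below $q_s$ it yields a condition $q_s' \leq q_s$ together with a minimally generated $U_s \geq_{RK} U$ such that $q_s' \forces [\mathrm{id}]_{U_s} \in \prod \iota_{U_s}(g_s)(\kappa)$. I would fix such a pair $(q_s',U_s)$ and a projection $\pi_s$ witnessing $U_s \geq_{RK} U$, and pull $q_s'$ back through $U$ to obtain, at each node, a strengthening of $G(s\concat\la\alpha\ra)$ which I record as the new collapse assignment $G^*$. Third, alongside the interdependent sequences of trees $\la T^\alpha \ra$ and internally approachable names $\la \dot{Y}^\alpha \ra$, I would thread a decreasing sequence of collapse conditions realizing these captured conditions; the $(\max s)^+$-closure of the direct extension order of the tail forcing $\bar{\po}_U/(p\downharpoonright s)$ (Remark \ref{Remark:P_U-factoring}), which exceeds $\zeta$ once we are above the node $\kappa_N \geq \zeta$, lets me take the required intersections and limits at the $\zeta$ stages.

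With $U_s$ minimally generated, the remaining machinery carries over essentially verbatim from Lemma \ref{Lemma:MainLemmaA}: the successive refinements of the measure-one sets $A^\alpha_s \supseteq B^\alpha_s \supseteq C^\alpha_s$ via Corollary \ref{genclosedrem}, the passage to the limit tree $T^\zeta$ and a final master tree $T^*$ with $\suc_{T^*}(s) \subseteq \pi_s[C_s]$, the construction of $Y = \bigcup_{\alpha<\zeta} Y^\alpha$, the cofinal sequences $\la \beta^{n,i}_\gamma \ra$, and finally $X = \Sk^{\mathfrak{A}}(Y \cup \{\beta^{n,i}_\gamma\})$. The end-extension arguments (Lemma \ref{dblendext}) and the cofinality computations (Lemmas \ref{IASkolemhulls} and \ref{finitevariationII}) that establish clauses (i), (iii), and (iv) go through unchanged, since the collapses above $\kappa_n^{+\tau^*_n}$ are sufficiently closed to add no relevant subsets and the interior ambiguity of cofinalities is governed exactly as before.

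The one genuinely new point, and the main obstacle, is clause (ii): the captured generator $b_n(i)$ now lands in $S^n_i$ only after forcing. By \L o\'{s}'s theorem applied to the capturing statement for $U_s$, the recorded collapse condition $G^*(\vec{\kappa}\uhr(n+1))$ forces $b_n(i) \in \name{S}^n_i$, so I must arrange that the final master condition $p^* = \la \emptyset, q^*, T^*, G^* \ra$ simultaneously (a) refines the tree so that $\suc_{T^*}(s) \subseteq \pi_s[C_s]$ as in Lemma \ref{Lemma:MainLemmaA}, and (b) carries, at every node, the threaded collapse conditions forcing these memberships, so that any generic $H \ni p^*$ contains $G^*(\vec{\kappa}\uhr(n+1))$ for all $n$. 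The bulk of the new work is in verifying that these two demands on $p^*$ are jointly realizable --- that is, that the density-based (rather than outright) capturing supplied by Lemma \ref{GenRKcaptcons} can be organized coherently across the whole Prikry tree and survive the $\zeta$ limit stages --- which is precisely what the closure of the tail direct extension order is there to guarantee.
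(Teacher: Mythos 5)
Your overall architecture matches the paper's: part (1) is read off from the cardinal-structure lemma for $\bar{\po}_U$, and part (2) is reduced to a $\bar{\po}_U$-analogue of Lemma \ref{Lemma:MainLemmaA} (the paper's Lemma \ref{MainLemmaB}), proved with Corollary \ref{Cor:barP_U-NameGuessing} and Lemma \ref{GenRKcaptcons} in place of their collapse-free versions and with collapse conditions threaded through the $\zeta$-stage construction. The gap is in your second step: you ``fix such a pair $(q_s',U_s)$'' --- one capturing condition and one minimally generated measure per node. This cannot work, because the coordinates of $\qo_t$ become fully $V$-generic in $V[H]$: the realized stationary tuple $\dot{S}_{t\concat\la\eta\ra}[H]$, and hence the measure needed to capture it, depends on which generic for $\qo_t$ the filter $H$ induces, and a single master condition $p^*$ must serve every generic containing it. Recording $q_s'$ in $G^*$ only pins down the last collapse coordinate; the part of $q_s'$ living in $\qo_t$ is just one condition of a poset on which $H$ is fully generic, and nothing forces $H$ to pass through it rather than through an incompatible extension of $q \concat (t;G)$ for which your fixed $U_s$ captures nothing. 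The closure of the tail direct extension order is the wrong tool for this: $\qo_s$ is precisely the factor excluded from that order.

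The paper resolves this by enumerating all $\max(s)^+$-many conditions $\la q_\gamma \ra$ extending the stem collapse part, applying generic capturing once for each to obtain a predense set of conditions $q^*$ each paired with its own minimally generated $U^{q^*}_t$, and absorbing the resulting $\max(s)^+$-length descending demands on the node coordinate into $G$ using the closure of $\col(\max(s)^{+(\tau^*_{|s|}+1)},\eta^+)$; in $V[H]$ the operative measure $U_n = U^{r_n}_{\vec{\kappa}\uhr (n+1)}$ is the one indexed by the antichain element that $H$ meets, which is exactly why the collapse generics $\la H_n \ra$ are placed into the hull $Y^0$. The same issue propagates into the $\zeta$-iteration, which you describe as carrying over ``essentially verbatim'': the amalgamated functions of $Y^\alpha$ are names whose values depend on the collapse generic, so the refinements must be families $B^{\alpha,r}_t$, $C^{\alpha,r}_t \in U^r_t$ indexed by densely many collapse conditions $r$, with the measure itself varying with $r$, rather than single sets $B^\alpha_s$, $C^\alpha_s$ as in Lemma \ref{Lemma:MainLemmaA}. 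Without this antichain-of-measures mechanism, clause (ii) is not established.
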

 Fix some measurable cardinal $\kappa$, a normal ultrafilter $U$, and a sequence $\<\tau^*_n: n < \omega\>$ as above. Let $\ddot{S}$ be a $\bar{\partord}_U$-name for a sequence of stationary tuples of types $\<(\dot{\tau}^n_i,\dot{\eta}^n_i) : i \leq \ell_n \>$.

We pick some condition $p = (s,q,T,G)$ that decides $\ddot{S}$ in the sense of Corollary \ref{Cor:barP_U-NameGuessing}, i.e. we have some sequence $\<\dot{S}_t: s \eextend t \in T\>$ such that $q \concat (t;G)$ forces $\dot{S}_s$ to be a stationary tuple of type $\<(\tau^t_0,\eta^t_0),\ldots,(\tau^t_{\ell_n},\eta^t_{\ell_n}\>$. Let $p_t$ be the condition represented by the function $\eta \mapsto q \concat (t \fr \la \eta\ra;G)$ in $\ult(V;U)$. We will have a $(p_t,U)$-sequence of stationary tuples $g_t(\eta) := \dot{S}_{t\concat\<\eta\>}$.

We will be done when we prove the next lemma.

\begin{lemma}\label{MainLemmaB}
   Assume that every $(p_t,g_t)$ is generically Rudin-Keisler captured, then there exists some direct extension $(s,q,T^*,G^*)$ of $(s,q,T,G)$ which forces the following: ``for every $N < \omega$, for every $\eta < \zeta < \dot{\kappa}_N$ there is a stationary set of $X \subset \kappa$ such that 
    \begin{itemize}
        \item $\zeta \subset X$,
        \item $\sup(X \cap \dot{\kappa}_m^{+\dot{\tau}^m_i}) \in (\dot{S}(m))_i$ for all $N \leq m < \omega$ and $i \leq \ell_n$,
        \item $\cof(X \cap \kappa^{+\tau^*}_m) \in \{\zeta,\dot{\eta}^m_i,\ldots, \dot{\eta}^m_{\ell_m}\}$ for all $N \leq m < \omega$, all $i \leq  \ell_n$ and all $\dot{\tau}^m_{i - 1} < \tau^* < \dot{\tau}^m_i$ ($\tau^m_{-1} = 1$),
        \item $\cof(X \cap \kappa^*) = \zeta$ for all regular cardinals $\kappa^* > \zeta$ in $X$ not covered by the above."
    \end{itemize}
\end{lemma}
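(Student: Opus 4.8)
The plan is to mimic the proof of Lemma \ref{Lemma:MainLemmaA} almost line for line, carrying the interleaved collapse labelings along the construction and substituting the two ingredients that were special to pure Prikry forcing. In place of Corollary \ref{Cor:PrikryNameGuessing} I would use Corollary \ref{Cor:barP_U-NameGuessing} to guess the auxiliary functions as $\qo_t$-names attached to the nodes of the tree, and in place of plain capturing I would use generic Rudin--Keisler capturing of the pairs $(p_t,g_t)$. As in Lemma \ref{Lemma:MainLemmaA} I would fix the canonical functions $\la h^\mu \mid \mu < \kappa^+\ra$, fix a regular $\zeta < \kappa$, and work above the stem $s$ through the factorization $\bar{\po}_U/p \cong \qo_s \times \po_U/(p\downharpoonright s)$ of Remark \ref{Remark:P_U-factoring}, holding the stem collapse condition $q$ fixed. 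Thinning each splitting set to its part above $\zeta$ (harmless, as $U$ is $\kappa$-complete and nonprincipal) and strengthening the collapse labelings only at nodes whose entries exceed $\zeta$ ensures that every interleaved collapse the fusion strengthens is of the form $\col(\gamma,\nu^+)$ with $\gamma > \zeta$; hence the relevant part of the tail direct-extension order is $\zeta^+$-closed and a length-$\zeta$ fusion can be run and met at limits.

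The interdependent sequences $\la T^\alpha, G^\alpha, \dot Y^\alpha \mid \alpha < \zeta\ra$ are then built exactly as before. At a successor step I would first apply Corollary \ref{Cor:barP_U-NameGuessing} to pass to a direct extension $(s,q,T^{\alpha+1},G^{\alpha+1})$ deciding the amalgamated auxiliary functions, producing for each node $t$ a $\qo_t$-name $f^\alpha_t$ with $(s,q,T^{\alpha+1},G^{\alpha+1}) \fr t \forces \dot f^\alpha_{|t|} = f^\alpha_t$. The genuinely new step is the selection of the capturing ultrafilters: for each node $t$ the pair $(p_t,g_t)$ is generically captured, so the collapse conditions below $p_t$ admitting a minimally generated $U_t \geq_{RK} U$ with $[id]_{U_t} \in \prod \iota_{U_t}(g_t)(\kappa)$ are dense below $p_t$; I would therefore further strengthen the labeling $G^{\alpha+1}$ above each node so that the collapse condition it pins down lands in that dense set, thereby fixing $U_t$ once and for all together with a projection $\pi_t$ witnessing $U_t \geq_{RK} U$. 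With $U_t$ and $\pi_t$ in hand the refinement of the measure-one sets is unchanged: $A^\alpha_t$ confines each $b(i)$ below $(\pi_t(b))^{+\tau^t_i}$ and makes each $g^\beta_t$ large or constant, $B^\alpha_t$ separates the values of $g^\beta_t$ against the canonical functions $h^{\mu_{\beta_t}}$, and minimal generation of $U_t$ (Definition \ref{Def:Generators}, through Corollary \ref{genclosedrem}) yields $C^\alpha_t$ on which the amalgamated functions send generator-indexed arguments either below $b(i)$ or above $(\pi_t(b))^{+\tau^t_i}$.

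After the fusion I would set $C_t := \bigcap_{\alpha<\zeta} C^\alpha_t$ and take the final direct extension $(s,q,T^*,G^*)$ with $\suc_{T^*}(t) \subseteq \pi_t[C_t]$ for every $t$. In $V[H]$ I would form $Y = \bigcup_{\alpha<\zeta}Y^\alpha$, choose for each $n \geq N$ (with $N$ minimal such that $\kappa_N \geq \zeta$) some $b_n \in C_n := C_{\vec\kappa\uhr n}$ projecting to $\kappa_n$, adjoin the definable cofinal sequences $\la \beta^{n,i}_\gamma : \gamma < \eta^n_i\ra$ in each $b_n(i)$, and set $X = \Sk^{\mathfrak A}(Y \cup \{\beta^{n,i}_\gamma\})$. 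The verifications of (i)--(iv) are then verbatim those of Lemma \ref{Lemma:MainLemmaA}: (i) is immediate; (ii) follows from each $b_n$ end-extending $Y$ below $\kappa_m^{+\tau^m_{\ell_m}}$, together with Lemma \ref{dblendext} and the defining property of $C_n$; (iii) follows since the relevant hulls are internally approachable of length $\zeta$, via Lemma \ref{IASkolemhulls}; and (iv) splits into the two sub-cases settled by Lemma \ref{finitevariationII} and the bound coming from $B_n$ and the $h^\mu$. These cofinality computations are untouched by the collapses, since every collapse interleaved below level $m$ is absorbed into the intermediate model and is small relative to the cardinals $\kappa_m^{+\tau}$ whose cofinalities are measured.

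The step I expect to be the main obstacle is precisely the coordination in the second paragraph: generic capturing yields the ultrafilter $U_t$ only densely below a collapse condition, so the strengthening of the labelings $G^\alpha$ needed to enter the dense capturing set at all nodes simultaneously must be folded into the same fusion that decides the auxiliary functions, all while keeping the $f^\alpha_t$ coherent across $\alpha$ and across nodes and keeping the assignment $t \mapsto U_t$ independent of $\alpha$. A secondary point requiring care is the passage from the fixed-$\zeta$ construction to the single direct extension forcing the statement for all $N$ and all $\zeta < \kappa_N$: this is achieved by a $\kappa$-length fusion in which the block handling a given $\zeta$ modifies only nodes with entries above $\zeta$, so that each interleaved collapse --- whose domain exceeds its node's entries and hence every relevant $\zeta$ --- absorbs all the strengthenings it receives, while $\kappa$-closure of the Prikry coordinate permits the meets throughout.
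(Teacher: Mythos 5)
Your overall architecture is right and matches the paper's proof in most respects: the name-guessing via Corollary \ref{Cor:barP_U-NameGuessing}, the length-$\zeta$ fusion on the tree and on the collapse labelings using the closure of the interleaved collapses above the stem, the three-stage refinement to sets $A$, $B$, $C$, and the final hull built from the cofinal sequences $\beta^{n,i}_\gamma$. But the step you yourself flag as the main obstacle is resolved incorrectly, and this is a genuine gap. Generic Rudin--Keisler capturing of $(p_t,g_t)$ produces a capturing ultrafilter only on a \emph{dense} set of conditions below $q \fr (t;G)$ in the full collapse product attached to the node, and the witnessing extensions must in general strengthen the coordinates of $\qo_t$ --- the finitely many collapses interleaved below $\max(t)$, including low-closure coordinates such as $\col(\omega_1,\cdot)$ and the stem part $q$ itself, which the lemma requires to remain unchanged and which the labeling $G$ does not control. (This is visible already in the proof of Lemma \ref{GenRKcaptcons}: the condition $q'$ stabilizing a stationary set $S_i^{q'}$ is found by extending in the whole poset, and there are $\kappa$-many nodes' worth of such dense sets, far beyond the closure of the small collapse coordinates.) Consequently there is no single $U_t$ per node that you can ``fix once and for all'' by strengthening $G$: different members of a maximal antichain below $q\fr(t;G)$ yield different capturing ultrafilters, and which one is operative is decided only by the collapse generic.

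The paper's proof therefore carries, at every node $t$, an entire antichain's worth of measures $U^{r}_t$ indexed by collapse conditions $r \leq q \fr (t;G)$ (normalized so that compatible $r$'s give the same measure), makes the auxiliary functions $g^\beta_{t,r}$ and the measure-one sets $B^{\alpha,r}_t$, $C^{\alpha,r}_t$ depend on $r$ as well as on $t$, and runs the dense-set bookkeeping by a recursion of length $\max(s)^+$ through an enumeration of all extensions of $q$, using the closure of the \emph{top} collapse coordinate (the only one $G$ controls) to take lower bounds. In the generic extension the sequence of collapse generics $\la H_n : n<\omega\ra$ is placed into the base structure precisely so that the hull can recognize which $U^{r}_t$ and which decided value of $f^\alpha_n$ is the correct one; this is the one genuinely new ingredient relative to Lemma \ref{Lemma:MainLemmaA}, and your write-up omits it. Your verification of (i)--(iv) would then go through essentially as you describe. (Your secondary worry about quantifying over all $\zeta$ is not a real issue: as in Lemma \ref{Lemma:MainLemmaA}, one fixes $\zeta$ below $\max(s)$, extending the stem if necessary, and a density argument over names for algebras yields the full forced statement; no $\kappa$-length fusion over the $\zeta$'s is needed.)
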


\begin{proof}
    First, we find a direct extension $p' = (s,q,T',G')$ of $(s,q,T,G)$ such that for every $s \eextend t \in T'$ and densely many $q^*$ below $q \concat (t;G')$ there is some minimally generated $U^{q^*}_s \geq_{RK} U$ such that 
    $$q^* \concat G'(t\concat\<\pi^r_s(b)\>) \forces b \in \prod \dot{S}_t$$ for $U^{q^*}_t$-measure one many $b$ where $\pi^{q^*}_t$ is the projection from $U^{q^*}_t$ onto $U$.
    
    We describe  how to get this property at the root $s$ of the condition tree. The full procedure is then just a level by level induction. Let $\<q_\alpha: \alpha < \max(s)^+\>$ be a full list of all conditions extending $q$.
    For a $U$-measure one set of $\eta < \kappa$, 
    we will recursively define a descending sequence $\<r^\eta_\alpha: \alpha \leq \max(s)^+\>$ of conditions $r^\eta_\alpha \in\col(\max(s)^{+(\tau^*_{\length(t)} + 1)}, \eta^+)$ that extend $G(s \concat\<\eta\>)$.
    At limit stages of the construction we take lower bounds using the fact the relevant forcing is $\max(s)^{++}$-closed.
    
    Let $r^\eta_0 = G(s \concat\<\eta\>)$. Suppose that $r^\eta_\alpha$ has been defined for some $\alpha < \max(s)^+$ and a $U$-measure one set of $\eta < \kappa$. Then $g_s$ is also a $(\left[\eta \mapsto q\concat r^\eta_\alpha\right],U)$-sequence of stationary tuples. By the assumption, the pair $(g_s,\left[\eta \mapsto q\concat r^\eta_\alpha\right])$ is generically Rudin-Keisler captured. 
    We can therefore find a witnessing condition, of the form $q^* \fr r^\eta_{\alpha+1}$ on a $U$-measure one set of $\eta$, so that     
    $q^* \leq q_\alpha$ and a minimally generated $U^{q^*}_s$, and $r^\eta_{\alpha + 1}$,  defined for a $U$-measure one set of $\eta$ with the required property.
Having defined all $\<r^\eta_\alpha: \alpha < \max(s)^+\>$ we let $G'(s\concat\<\eta\>)$ be a lower bound of $\<r^\eta_\alpha: \alpha <\max(s)^+\>$. 
     This is defined for all $\eta$ such that every $r^\eta_\alpha$ is defined which is the intersection of $\xi$-many $U$-measure one sets. We then let $\suc_{T'}(s)$ be that intersection.
    
    Note that $q$ remains unchanged. We shall make another harmless assumption here. If $r_0$ and $r_1$ are compatible conditions then we shall have that $U^{r_0}_t = U^{r_1}_t$. Thus we only have one maximal antichain worth of measures at every node.
    
    To keep our notation tidy we shall assume that $(s,q,T,G)$ had the required property to begin with and dispense with $T'$ and $G'$.
    
    Working for a moment in the generic extension we will let $\<\kappa_n: n < \omega\>$ be the generic sequence. $\<H_n: n < \omega\>$ will be the collapses in between the generic sequence. We will also have $U_n := U^{r_n}_{\<\kappa_0,\ldots,\kappa_n\>}$ where $r_n \in \prod\limits_{i \leq n} H_i$. Let us pick names $\<\dot{\kappa}_n: n < \omega\>$, $\<\dot{H}_n: n < \omega\>$, and $\<\dot{U}_n: n < \omega\>$.
    
    Pick some $\zeta < \kappa$ which remains a regular uncountable cardinal in the extension. We can and shall assume that $\zeta < \max(s)$ otherwise we must extend $s$.
    
    We now build sequences $p^\alpha := \<(s,q,T^\alpha,G^\alpha): \alpha < \zeta\>$ and $\<\ddot{X}^\alpha: \alpha < \zeta\>$ as in the proof of Lemma \ref{Lemma:MainLemmaA}, i.e.
    
    \begin{itemize}
        \item $T^0 := T$, $G^0 := G$ and $(s,q,T,G) \forces \ddot{X}^0 = \Sk^{\dot{\mathfrak{A}}}(\{\dot{U}_n,\dot{\kappa}_n,\dot{G}_n: n < \omega\})$;
        \item $(s,q,T^\beta,G^\beta) \dleq (s,q, T^\alpha,G^\alpha)$ for $\alpha \leq \beta < \zeta$;
        \item $(s,q,T^\alpha,G^\alpha) \forces \ddot{X}^\alpha \prec \dot{\mathfrak{A}}$.
    \end{itemize}
    
    Let us assume $T^\alpha$, $G^\alpha$, and $\ddot{X}_\alpha$ have already been constructed. Let $\ddot{f}^\alpha_n$ be names for functions amalgamating the functions $f:\finsubsets{\kappa^{+(\tau^*_n)}_n} \rightarrow \kappa^{+(\tau^*_n)}_n$ in $\ddot{X}_\alpha$. 
    
    We let $p' := (s,q,T',G') \dleq (s,q,T^\alpha,G^\alpha)$ that decides the sequence $\<\ddot{f}^\alpha_n: n < \omega\>$, i.e there is a sequence $\<\dot{f}^\alpha_s: s \eextend t \in T'\>$ of collapse-product names such that $p' \concat t \forces \dot{f}^\alpha_s = \ddot{f}^\alpha_{\length(s)}$.
    
    We then define (partial) functions $f^\alpha_t$ such that $f^\alpha_t(r,\beta,a) = \gamma$ iff $r \forces \dot{f}^\alpha_s(\check{\beta},\check{a}) = \check{\gamma}$ where $r \leq q \concat (t;G')$.
    
    We will now define $(s,q,T^{\alpha + 1},G^{\alpha + 1})$ in a similar construction to the start of this proof. They must have the property that for every $s \eextend t$ in $T^{\alpha + 1}$ and densely many $r \leq q \concat (t;G')$, there is some $B^{\alpha,r}_t \in U^r_t$ such that for all $\beta < \delta_\alpha$, 
    defining  
    $$g^\beta_{t,r}(b) = f^\alpha_{t\concat\<\pi^r_t(b)\>}(r\concat G^{\alpha + 1}(t\concat\<\pi^r_t(b)\>),\beta,b)$$ then 
    either
    \begin{itemize}
        \item $g^\beta_{t,r}(b) \geq \pi^r_t(b)$, or 
        \item $g^\beta_{t,r}(b)$ is constant on $B^{\alpha,r}_t$.
    \end{itemize}    
    
    As before we will only detail the procedure for $t = s$. Let $\<q_\gamma: \gamma < \xi\>$ be a full list of all conditions extending $q$. We will recursively define a descending list of $\<r^\eta_\gamma: \gamma < \xi\>$ of conditions $r^\eta_\gamma \leq G'(s \concat\<\eta\>)$ where $\eta$ is taken from some $U$-measure one set.
    
    If $r^\eta_\gamma$ is already given, then there is a $U^{q_\gamma}_s$-measure one set $A^{q_\gamma}_s$ such that for every $\beta < \delta_\alpha$, for every $b \in A^{q_\gamma}_s$ there is some $q^* \leq q_\gamma$ and $r^*_b \leq r^{\pi^{q_\gamma}_t(b)}_\gamma$ such that $q^* \fr r^*_b$ decides the value of $\dot{f}^\alpha_{s\concat\<\pi^q_s(b)\>}(\check{\beta},\check{b})$. We let $r^\eta_{\gamma + 1} = r^*_b$ for $b \in A^{q_\gamma}_s$ with $\pi^{q_\gamma}_s(b) = \eta$.
    
    Finally, we then let $G^{\alpha + 1}(s\concat\<\eta\>)$ be some lower bound of $\<r^\eta_\gamma: \gamma < \xi\>$ where $\eta \in \suc_{T'}(s) \cap \bigcap\limits_{\gamma < \xi} {\pi^{q_\alpha}_s}[A^{q_\alpha}_s]$. Once again we let $G^{\alpha + 1}(s) = G'(s)$. Similarly, the newly defined $G^{\alpha + 1}(s\concat\<\eta\>)$ will not be subjected to further changes throughout the construction.
    
    We then have that $g^\beta_{s,r}$ as above is well-defined on a $U^r_s$-measure one set for a dense set of $r$'s. We can then find sets $B^\alpha_{s,r}$ with the required property.
    
    We refine $B^\alpha_{t,r}$ further to a set $C^\alpha_{t,r}$ such that for every $b \in C^\alpha_{t,r}$, for every $\beta < \delta_\alpha$, for every $i \leq \ell_{|t|}$, every $a \subset b(i)$ and 
    $r^* \leq r\concat G^{\alpha + 1}(\pi^r_t(b))$ if 
    $$f^\alpha_{t\concat\<\pi^r_t(b)\>}(r^*, \beta, a \cup (b \downharpoonright (i + 1))) < \pi^q_s(b)^{+\tau_i},$$ then 
    $$f^\alpha_{t\concat\<\pi^r_t(b)\>}(r^*, \beta, a \cup (b \downharpoonright (i + 1))) < b_i.$$
    
    We then let $\ddot{X}^{\alpha + 1}$ be a name such that $p^{\alpha + 1} \forces \ddot{X}^{\alpha + 1} = \Sk^{\dot{\mathfrak{A}}}(\check{\delta}_\alpha \cup \{\ddot{X}^\alpha,\<\check{g}^\beta_{t,r}: s \eextend t \in T^{\alpha + 1}, r \leq q \concat (t;G^{\alpha + 1})\>\})$.

    At limit stages $\gamma < \zeta$, we let $T^\gamma = \bigcap\limits_{\alpha < \gamma} T^\alpha$, $G^\gamma(t)$ a lower bound for $\<G^\alpha(t): \alpha <\gamma\>$, and let $\ddot{X}^\gamma$ be a name for the union of $\ddot{X}^\alpha$ for $\alpha < \gamma$. Here it is important that we assumed $\zeta < \max(s)$ so that we have enough closure in the relevant collapse parts.
    
    Similarly, after having constructed the full sequence $\<T^\alpha: \alpha < \zeta\>$ and $\<\ddot{X}^\alpha: \alpha < \zeta\>$ we let $T^\zeta = \bigcap\limits_{\alpha < \zeta} T^\alpha$, $G^\zeta$ a lower bound for $\<G^\alpha: \alpha < \zeta\>$, and $\ddot{X}$ a name for the union of the $\dot{X}^\alpha$'s.
    
    In the process we have also constructed $C^r_t := \bigcap\limits_{\alpha < \zeta} C^\alpha_{r,t}$. We find the final condition $(t,T^*,G^*)$ by shrinking the tree such that $\suc_{T^*}(t) \subset \bigcap \ptwimg{\pi^r_t}{C^r_t}$.
    
    In the extension we then have $C_n \in U_n$. For all $n > \length(s)$ we will have some $b_n \in C_n$ that projects down to $\kappa_n$. We pick then suitable $\<\beta^{i,n}_\gamma: \gamma < \eta^i_n\>$ cofinal in $b_n(i)$. The final structure then is $Y:= \Sk^\mathfrak{A}(X \cup \{\beta^{i,n}_\gamma: \gamma < \eta^n_i, \length(s) < n < \omega, i \leq \ell_n\})$.
    
    The argument to check that this works is almost the same as in Lemma \ref{Lemma:MainLemmaA}. The only addition is that, here, for every $\alpha < \zeta$ we have an antichain of values for $f^{\alpha'}_n(\alpha,b)$ given by the functions $g^{r,t}_\alpha$. This is why we put the sequence $\<H_n: n < \omega\>$ into the hull as it will tell us which one gives the correct value. The rest is as before.
\end{proof}

We record a version of the last proposition, applied to trivial types. Its proof is a straightforward modification of the last.
\begin{corollary}
        Let $U$ be a normal measure on $\kappa$ and let $\<k_n: n < \omega\>$ be a sequence of positive natural numbers. Let $H \subset \bar{\partord}^{\la 2k_n - 1\ra_{n < \omega}}_U$ generic over $V$ with $\<\kappa_n : n < \omega\>$ representing its Prikry part. If the trivial sequence of type $\<(2,\eta),\ldots,((k + 2),\eta)\>$ is Rudin-Keisler captured relative to $U$, then in $V[G]$ $\kappa = \aleph_\omega$ and for every $N < \omega$ there is a stationary set of $X \subset \kappa$ such that 
        \begin{itemize}
            \item $\kappa^{+(2(k + 1) + 1)}_N \subset X$,
            \item $\cof(X \cap \kappa^{+(l + 2)}_n) = \eta$ for every $N < n < \omega$ and $l \leq k$,
            \item $\cof(X \cap \kappa^{+(l + (k + 3))}_n) = \zeta$ for every $N < n < \omega$ and $l \leq k$.
            \end{itemize}
\end{corollary}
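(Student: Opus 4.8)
The plan is to rerun the construction of Lemma \ref{MainLemmaB} essentially verbatim, specialised to the fixed trivial type $\mathrm{t} = \la (2,\eta),\ldots,((k+2),\eta)\ra$, and to read the two cofinality ranges off the two kinds of coordinates that appear in the resulting hull. Before invoking that machinery I would dispatch the cardinal-structure claim directly from the forcing: by the basic structure lemma for $\bar{\po}_U$, applied with $\vec{\tau} = \la 2k_n - 1 \mid n < \omega\ra$, the cardinals surviving below $\kappa$ are exactly those in the blocks $[\kappa_n^{++},\kappa_n^{+2k_n}]$, and $\kappa = \aleph_{\bar{\tau}}$ for $\bar{\tau} = \sum_{n<\omega}\big(1 + (2k_n - 1)\big) = \sum_{n<\omega} 2k_n$. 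As each summand is a positive finite ordinal and there are infinitely many, this ordinal sum is exactly $\omega$, so $\kappa = \aleph_\omega$.

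The conceptual point that makes the modification \emph{straightforward} is that, for a trivial type, the generic Rudin-Keisler capturing hypothesis needed in Lemma \ref{MainLemmaB} comes for free from the plain capturing assumed here. Indeed, if $g_t$ is the trivial $(U,p_t)$-sequence of type $\mathrm{t}$, then each $\iota_V(g_t)(\kappa)(i)$ is the standard name for $\cof(\eta)\cap\kappa^{+(i+2)}$, whose realisation does not depend on the collapse generic. Hence, for the minimally generated $V \geq_{RK} U$ furnished by the hypothesis --- which, by the remark following Lemma \ref{RKcaptcons}, we may take to carry a generator $a$ with $a(i) \in \kappa^{+(i+2)}\cap\cof(\eta)$ --- the statement $\check{[id]}_V \in \prod \iota_V(g_t)(\kappa)$ is forced by \emph{every} condition. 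Thus every pair $(p_t,g_t)$ is generically Rudin-Keisler captured using this single $V$, and the construction of Lemma \ref{MainLemmaB} applies with the same type at every node; in particular no appeal to the full capturing of Lemma \ref{GenRKcaptcons} is needed.

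Running that construction, I would fix $N < \omega$ and choose a surviving regular cardinal $\zeta$ with $\eta \leq \zeta$ and $\kappa_N^{+(2k+3)} \leq \zeta < \kappa_{N+1}$, then apply Lemma \ref{MainLemmaB} with $N+1$ in place of its $N$ and with this $\zeta$, so that all blocks $n > N$ are controlled. The output is a forced stationary family of hulls $X \elem \mathfrak{A}$. The first bullet, $\kappa_N^{+(2k+3)} \subseteq X$, is immediate from $\zeta \subseteq X$ and the choice of $\zeta$. For each $n > N$ the generator coordinates $b_n(i)$, $i \leq k$, give $\sup(X\cap\kappa_n^{+(i+2)}) = b_n(i)$ with $\cof(b_n(i)) = \eta$, which is the second bullet; the remaining surviving regular cardinals of block $n$, namely $\kappa_n^{+(l+(k+3))}$ for $l \leq k$, are pinned by no generator coordinate and so fall under item (iv) of Lemma \ref{MainLemmaB}, receiving cofinality $\zeta$ through the internal approachability of the approximating hulls (Lemma \ref{IASkolemhulls}). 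This is the third bullet, and the forced stationarity of the family of such $X$ is exactly the conclusion of Lemma \ref{MainLemmaB}.

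The main obstacle I anticipate is the index bookkeeping around these two ranges. One must verify that each block is wide enough to contain both of them --- that is, $2k + 3 \leq 2k_n$, so that $\kappa_n^{+(k+2)}$ and $\kappa_n^{+(2k+3)}$ are genuinely surviving cardinals --- and that nothing falls into the ambiguous zone of Remark \ref{Remark:Dodd-Solidity}. The latter is precisely where the trivial, gapless type helps: since the $\eta$-coordinates occupy the consecutive cardinals $\kappa_n^{+2},\ldots,\kappa_n^{+(k+2)}$, there are no regular cardinals strictly between successive generator coordinates, so the Dodd-solidity ambiguity never arises and every surviving regular cardinal above $\kappa_n^{+(k+2)}$ is governed by item (iv) with cofinality $\zeta$. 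Once these bounds are checked, the verification of the three bullets is identical to that of the corresponding items in Lemma \ref{MainLemmaB}.
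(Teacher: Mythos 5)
Your proposal is correct and takes essentially the same route the paper intends: the paper's own ``proof'' of this corollary is the single remark that it is a straightforward modification of Proposition \ref{Prop:MainProp} (via Lemma \ref{MainLemmaB}), and your argument is precisely that modification spelled out, including the genuinely needed observation that for a trivial type the plain Rudin--Keisler capturing hypothesis already yields the generic capturing required at every node (so no appeal to Lemma \ref{GenRKcaptcons} is needed). The index bookkeeping you flag (that one needs $2k+3 \leq 2k_n$ for the relevant cardinals to survive) is a real wrinkle, but it concerns the corollary as stated rather than any gap in your argument.
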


With Proposition \ref{Prop:MainProp} we can prove the main results stated in the introduction. Starting from Theorem \ref{Theorem:WoodinQuestion}.

\begin{proof}(Theorem \ref{Theorem:WoodinQuestion})\\
Suppose that $\kappa$ is $(\omega+2)$-strong in $V$ and
$j : V \to M$ be a witnessing elementary emebdding. Denote its induced normal measure by $U$.
By Lemma \ref{RKcaptcons}, the full Rudin-Keisler capturing holds up to $\tau = \omega+1$, with respect to $U$, and by
 Lemma \ref{GenRKcaptcons}, the full generic Rudin-Keisler capturing holds up to $\tau = \omega+1$. 
Let $\bar{\po}_U = \bar{\po}_U^{\vec{\tau}}$ for the constant sequence $\vec{\tau} = \la \tau_n^* \mid n < \omega\ra$, of $\tau^*_n = \omega+1$ for all $n < \omega$, and
 $H \subseteq \bar{\po}_U$ be a generic filter. Denote its Prikry sequence by $\la \kappa_n \mid n < \omega\ra$.
 By Proposition \ref{Prop:MainProp}, in $V[H]$, all cardinals below $\kappa$ are collapsed except those in the intervals $[\kappa_n^{++},\kappa_n^{+\omega+2}]$. It follows that $\kappa = \aleph_{\omega^2}$ and for each $n$, $\aleph_{\omega n + 1} = \kappa_n^{+\omega+1}$. Moreover, for every  $V[H]$ cardinal $\eta < \kappa$, considering the simple length-one types $t(n) = \la(\omega+1,\eta)\ra$ for all $n$, the proposition asserts that  every sequence in $V[H]$, $\vec{S} = \la S_n^0 \mid n < \omega, \eta < \kappa_n\ra$,  of stationary sets $S_n^0 \subseteq \kappa_n^{+\omega+1} \cap \cof(\eta)$, is mutually stationary. 
\end{proof}

We turn to the second main theorem. 
\begin{proof}(Theorem \ref{Theorem:SecondMain})\\
Suppose that $\kappa$ is $\omega$-strong in $V$, and $U$ a normal measure on $\kappa$ derived from a witnessing embedding.
By Lemmas \ref{RKcaptcons} and \ref{GenRKcaptcons}
the full generic Rudin-Keisler capturing up to $\omega$ holds for $U$. Let $\bar{\po}_U = \bar{\po}_U^{\vec{\tau}}$ where $\vec{\tau} = \la \tau_n^* \mid n < \omega\ra$ is given by $\tau_n^* = k_n$.
The result follows at once from Proposition \ref{Prop:MainProp}.
\end{proof}

\section{Towards an optimal assumption}\label{Section:WeakRKcapturing}

\begin{definition}
    Let $U$ be a normal ultrafilter on $\kappa$. Let $\<(\tau_0,\eta_0),\ldots,(\tau_n,\eta_n)\>$ be a sequence of pairs such that $\tau_i,\eta_i < \kappa$ for all $i \leq n$.
    \begin{itemize}
        \item[$(a)$] A Rudin-Keisler system of type $\<(\tau_0,\eta_0),\ldots,(\tau_n,\eta_n)\>$ (over $U$) is a sequence $\<U_s: s \in \prod\limits_{i \leq n} \eta_i\>$  such that
        \begin{itemize}
            \item $U_s \geq_{RK} U$ is such that $a_s(i) < (\kappa^{+\tau_i})^{\ult(V;U_s)}$ where $a_s$ is the principal element of $U_s$;
            \item $U_t \geq_{RK} U_s$ (as witnessed by $\pi_{t,s}$) whenever $t,s$ are such that $t(i) \geq s(i)$ for all $i \leq n$;
            \item whenever $t,s$ are such that $t \downharpoonright (i + 1) = s \downharpoonright (i + 1)$, $t(i) > s(i)$, and $t(j) \geq s(j)$ for $j < i$, then $\iota_{U_t}(f)(\iota_{U_t}(\pi_{t,s})(a_t) \downharpoonright i) > (\kappa^{+\tau_i})^{\ult(V;U_t)}$ or $\iota_{U_t}(f)(\iota_{U_t}(\pi_{t,s})(a_t) \downharpoonright i) < a_t(i)$ for all $f:\finsubsets{\kappa} \rightarrow \kappa$.
        \end{itemize}
        \item[$(b)$] Let $\<U_s: s \in \prod\limits_{i \leq n} \eta_i\>$ be a Rudin-Keisler system over $U$ as witnessed by $\<\pi_{t,s}\>$ and $\<\pi_s\>$. Let $A_s \in U_s$. A run through $\<A_s: s \in \prod\limits_{i \leq n} \eta_i\>$ with base $\alpha$ is a tuple $\<\<\alpha^i_\gamma: \gamma < \eta_i\>: i \leq n\>$ such that
        \begin{itemize}
            \item $\vec{\alpha}_s := \<\alpha^i_{s(i)}: i \leq n\> \in A_s$ for all $s \in \prod\limits_{i \leq n} \eta_i$;
            \item $\pi_{t,s}(\vec{\alpha}_t) = \vec{\alpha}_s$ for all $t,s \in \prod\limits_{i \leq n} \eta_i$ such that $t(i) > s(i)$ for all $i \leq n$;
            \item $\pi_s(\vec{\alpha}_s) = \alpha$ for all $s \in \prod\limits_{i \leq n} \eta_i$.
        \end{itemize}
    \end{itemize} 
\end{definition}

\begin{definition}
    Let $U$ be a normal ultrafilter on $\kappa$. Let $f$ be a $U$-sequence of stationary tuples of type $\<(\tau_0,\eta_0),\ldots, (\tau_n,\eta_n)\>$. $f$ is weakly Rudin-Keisler captured iff there exists a Rudin-Keisler system of type $\<(\tau_0,\eta_0),\ldots,(\tau_n,\eta_n)\>$ $\<U_s: s \in \prod\limits_{i \leq n} \eta_i\>$ such that for every $\<A_s: s \in \prod\limits_{i \leq n} \eta_i\>$ with $A_s \in U_s$ there are $U$-measure many $\alpha < \kappa$ such that there exists a run $\<\<\alpha^i_\gamma: \gamma < \eta_i\>: i \leq n\>$ through $\<A_s: s \in \prod\limits_{i \leq n}\>$ with base $\alpha$ and the additional property that $\sup\limits_{\gamma < \eta_i} \alpha^i_\gamma \in f(\alpha)(i)$ for all $i \leq n$.
\end{definition}

\begin{lemma}
     Let $U$ be a normal ultrafilter on $\kappa$. Let $f$ be a $U$-sequence of stationary tuples. If $f$ is Rudin-Keisler captured, then it is weakly Rudin-Keisler captured.
\end{lemma}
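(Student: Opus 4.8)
The plan is to build the required Rudin--Keisler system directly out of the single minimally generated measure supplied by ordinary capturing, and then to verify the weak capturing clause by reflecting one canonical witness through its ultrapower. First I would unwind the hypothesis: let $W \geq_{RK} U$ be the minimally generated measure witnessing that $f$ is Rudin--Keisler captured, with $\iota_W$ its ultrapower embedding into $M_W = \ult(V;W)$ and seed $a = [\mathrm{id}]_W = \la a(0),\dots,a(n)\ra$. Then $a$ is a generator of $\iota_W$ and $a(i) \in S_i := \iota_W(f)(\kappa)(i)$ for every $i \leq n$; since $f$ has type $\la (\tau_i,\eta_i)\ra_i$, inside $M_W$ we have $a(i) < \kappa^{+\tau_i}$ and $\cof(a(i)) = \eta_i$.

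Next I would \emph{resolve} each coordinate of the seed. Working in $M_W$, for each $i \leq n$ I would fix a continuous increasing sequence $\la c^i_\gamma : \gamma < \eta_i\ra$ cofinal in $a(i)$, arranged so that each $c^i_\gamma$ is definable from $a(i)$ over the fixed well-order and codes its own proper initial segment (the rank-coding device already used in Lemma \ref{Lemma:MainLemmaA}). For $s \in \prod_{i \leq n} \eta_i$ set $a_s := \la c^i_{s(i)} : i \leq n\ra$ and let $U_s := U^{\iota_W}_{a_s}$ be the measure derived from $\iota_W$ at $a_s$. The projection $\pi_s$ witnessing $U_s \geq_{RK} U$ exists because $\kappa$ is recoverable from the least component $a_s(0)$ (e.g. as the least $\mu$ with $a_s(0) < \mu^{+\tau_0}$, using $\tau_0 < \kappa$ and the inaccessibility of $\kappa$), and for $t \geq s$ coordinatewise the projection $\pi_{t,s}$ exists by decoding initial segments through the coding above. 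This produces the candidate system $\la U_s : s \in \prod_{i\le n}\eta_i\ra$.

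I would then check the three clauses defining a Rudin--Keisler system. The bound $a_s(i) < (\kappa^{+\tau_i})^{\ult(V;U_s)}$ and the reductions $U_t \geq_{RK} U_s$ transfer routinely along the factor map $k_s : \ult(V;U_s) \to M_W$ sending $[\mathrm{id}]_{U_s}$ to $a_s$. The coherence clause is the heart of the matter: pulled back through $k_t$ it asks that whenever $s$ agrees with $t$ above $i$ and $s(i) < t(i)$, every value $\iota_W(g)\big(c^i_{s(i)},c^{i+1}_{t(i+1)},\dots,c^n_{t(n)}\big)$ lying below $\kappa^{+\tau_i}$ is already below $c^i_{t(i)}$. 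Since each $c^j_\gamma$ was taken definable from $a(j)$, the higher entries lie in the hull of $\la a(j):j>i\ra$, so by Lemma \ref{genclosed} and the fact that $a(i)$ is an $\la a(j):j>i\ra$-generator, any such value below $\kappa^{+\tau_i}$ is automatically below $a(i)$. The remaining task is to force it below the \emph{next} term $c^i_{t(i)}$, which I would secure by a closing-off clause in the construction of the previous step: each $c^i_{\gamma+1}$ is chosen above all of the (fewer than $\kappa$, as the higher grid tuples range over a finite product of the $\eta_j<\kappa$) relevant values attached to $c^i_\gamma$.

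Finally, weak capturing follows by reflection and is the clean step. Fix any $\la A_s : s \in \prod_{i\le n}\eta_i\ra$ with $A_s \in U_s$; by definition of $U_s$ this says precisely $a_s \in \iota_W(A_s)$. Let $B$ be the $V$-set of $\alpha < \kappa$ admitting a run through $\la A_s\ra$ with base $\alpha$ and $\sup_{\gamma} \alpha^i_\gamma \in f(\alpha)(i)$ for all $i$; to obtain $B \in U$ it suffices to see $\kappa \in \iota_W(B)$, and $\la\la c^i_\gamma : \gamma<\eta_i\ra : i\le n\ra$ is the witnessing run, since $\vec{\alpha}_s = a_s \in \iota_W(A_s)$, $\iota_W(\pi_{t,s})(a_t) = a_s$, $\iota_W(\pi_s)(a_s) = \kappa$, and $\sup_\gamma c^i_\gamma = a(i) \in \iota_W(f)(\kappa)(i)$. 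I expect the main obstacle to sit entirely in the construction of the sequences, namely making them simultaneously cofinal in $a(i)$ \emph{and} closed off enough to force the coherence clause. The delicate point is that the closing-off must remain strictly below $a(i)$ at every step, so that the supremum stays $a(i)$; this is exactly where one must lean on $a(i)$ being a generator (to confine the relevant values below $a(i)$) together with $\cof(a(i)) = \eta_i$, and it is the place where the argument may require extra care, or a sharper choice of the $c^i_\gamma$ among generators, should the naive closing-off threaten to reach $a(i)$.
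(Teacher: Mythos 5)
Your proposal is correct and follows essentially the same route as the paper: both derive the system $U_s := U^{\iota_W}_{a_s}$ from sequences resolving each seed coordinate $a(i)$ into a cofinal $\eta_i$-sequence with mutual projections, and both verify weak capturing by observing that these sequences constitute a run with base $\kappa$ inside $\ult(V;W)$, so the set of good bases is in $U$ by \L o\'{s}. The one delicate point you flag at the end --- keeping the closing-off below $a(i)$ while securing the coherence clause --- is resolved in the paper not by making the $c^i_\gamma$ generators (the authors explicitly note they do not know how to do this compatibly with the projection maps, which is why the definition of a Rudin--Keisler system does not demand minimal generation) but by arranging at least one generator strictly between consecutive elements of each sequence, which confines the relevant Skolem values below the next term exactly as your closing-off intends.
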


\begin{proof}
    Let $\<(\tau_0,\eta_0),\ldots,(\tau_n,\eta_n)\>$ be the type of $f$, and let $U_f$ witness that $f$ is Rudin-Keisler captured. In $\ult(V;U_f)$ we then have the principal element $a$ of $U_f$ with the property that $a(i) \in \iota_{U_f}(f)(\kappa)(i)$. We pick then in the ultrapower sequences $\<\alpha^i_\gamma: \gamma < \eta_i\>$ for $i \leq n$ such that 
    \begin{itemize}
        \item $\sup\limits_{\gamma < \eta_i} \alpha^i_\gamma = a(i)$,
        \item $\iota(\pi^i_{\gamma,\delta})(\alpha^i_\gamma) = \alpha^i_\delta$ for some $\pi^i_{\gamma,\delta}: \kappa \rightarrow \kappa$ for all $\delta < \gamma < \eta_i$ and $i \leq n$.
    \end{itemize}
    
    (We do not know how to pick $\alpha^i_\gamma$ with these properties while also maintaining that they are generators. This is why we did not require elements of a Rudin-Keisler system to be minimally generated. Never the less we can assume that there is always at least one generator between any two elements in the sequence which is sufficient.)
    
    Define then the Rudin-Keisler system by letting $U_s := U^{\iota_{U_f}}_{\vec{\alpha}_s}$ where $\vec{\alpha}_s = \<\alpha^i_{s(i)}: i \leq n\>$.
    
    For any $\<A_s: s \in \prod\limits_{i \leq n} \eta_i\>$ with $A_s \in U_s$ the $\alpha^i_\gamma$'s then constitute a run through $\<\iota_{U_f}(A_s): s \in \prod\limits_{i \leq n} \eta_i\>$ with base $\kappa$ such that $\sup\limits_{\gamma < \eta_i} \alpha^i_\gamma \in \iota_{U_f}(f)(\kappa)(i)$ witnessing that $f$ is weakly Rudin-Keisler captured by $\<U_s: s \in \prod\limits_{i \leq n} \eta_i\>$.
\end{proof}

\begin{remark}
    The reverse implication does not hold. Working in a canonical inner model we can use the coherence  of the extender sequence to show that in the above argument the Rudin-Keisler system is an element of the ultrapower $\ult(V;U_f)$, and by an absoluteness argument also witnesses weak Rudin-Keisler capturing there. By a reflection argument we then have that $\kappa$ is already a limit of cardinals where a sequence of that type is weakly Rudin-Keisler captured.
\end{remark}

We want to close by pointing out how the weak Rudin-Keisler capturing is sufficient for all the arguments we have made thus far. 

Considering for example the proof of Proposition \ref{Prop:CapturingMain}, instead of working with Filters $U_s$ for every node in the tree end-extending the stem we will have a Rudin-Keisler system $\<U^s_t: t \in \prod\limits_{i \leq \ell_s} \eta^s_i\>$. 

In the end we will have sets $C^s_t \in U^s_t$ of ordinals sufficiently closed under the Skolem functions of the hull we were building. We then choose the final condition to be such that for every splitting node $s$ every successor has a run through $\<C^s_t: t \in \prod\limits_{i \leq \ell_s} \eta^s_i\>$ with it as its base and takeing supremums up to be in the right set.

Working in the generic extension we then have that $\kappa_n$ has a run of sufficiently closed ordinals with it as its base for all but finitely many $n$. Those runs can then be used to end-extend the structure in the correct fashion.

\section{Open Problems}\label{Section:OpenProblem}

Much remains unknown regarding the ability to produce special elementary substructures of singular cardinals. The most well-known problem around this topic is whether $\aleph_\omega$ can be a Jonsson cardinal, but there are many other configurations that remain unknown. 
The notion of Rudin-Keisler capturing and its use suggests that configurations of measures and generators could lead to new developments. We list a number of related problems. \\

The notion of Rudin-Keisler capturing can be associated with extenders, but as opposed to extenders, the the measures used in the Rudin-Keisler arguments do not have to cohere themselves but only project to the same normal measure. Nevertheless, our examples of Rudin-Keisler capturing are derived from strong-embeddings whose finer properties seem to have an effect on the some of the outcomes (For example,  Remark \ref{Remark:Dodd-Solidity} points out that cofinality of the structure constructed in Proposition \ref{Prop:CapturingMain} below certain cardinals could be effected by properties such as Dodd-Solidity of an underlying extender).
There are many types of Rudin-Keisler systems with different behaviours, such as coming from extenders with long generators, weak extenders which witness a cardinal being tall but not strong, or  extenders comprising stronger types of measures such as strongly compact measures and various regular or irregular measures on different cardinals. 
Specifically, one can ask about the ability to produce Chang-type structures.
\begin{question}
Can one obtain various Chang-type structure using RK-capturing arguments with strong measures?
\end{question}

Another perspective that could lead to ``exotic'' types of Rudin-Keisler capturing systems is from forcing that produce special types of scales, such as in \cite{10.2307/20535089}, \cite{cummings_foreman_2010}, and \cite{Sinapova2015}.
It is also natural to ask if extenders are actually necessary to construct Rudin-Keisler capturing systems. 
We propose the result of Theorem \ref{Theorem:WoodinQuestion}, which uses an $\omega+2$-strong cardinal, as a test question.

\begin{question}
What is the consistency strength of the conclusion in Theorem \ref{Theorem:WoodinQuestion}?
\end{question}

The argument in section \ref{Section:WeakRKcapturing} suggest that $(\omega+2)$-strong is not an optimal assumption.\footnote{In fact,  our mutual stationarity results are obtained by adding a Prikry sequence with respect to a normal measure $U$, and it is not hard to see that all are satisfied in ultrapowers of $V$ which see $U$. Therefore,  the optimal assumptions should only use a normal measure concentrating on a certain type of ordinals.} 
On the other hand, very weak Rudin-Keisler systems, such as the generically induced systems used in \cite{HOD2} and
\cite{AdolfBN} seem to be too weak for carrying the Rudin-Keisler capturing arguments. 
In \cite{mutstat}, the authors introduce a stronger notion of tight stationarity. Consistency results about tightly stationary sets show connections to extender based forcing (\cite{singularstatII}) and PCF theory (\cite{LiuShe-MS}).
Much remains unknown about the consistency results of tightly stationary sequences. 

\begin{question}
Is it consistent that for some sequence $\la \kappa_n \mid n < \omega\ra$ of regular cardinals and some cofinality $\rho < \kappa_0$, all sequences of stationary sets $S_n \subseteq \kappa_n \cap \cof(\rho)$ are tightly stationary?
\end{question}


\begin{question}
Is there an extender-like forcing for Rudin-Keisler capturing systems, which can make every mutually stationary sequence in the models of Theorems \ref{Theorem:WoodinQuestion} or \ref{Theorem:SecondMain} become tightly stationary?
\end{question}


Finally, we ask about extending the Rudin-Keisler capturing-based results to longer sequences of stationary sets.

\begin{question} (Mutual Stationarity up to $\aleph_{\omega^2}$)
Is it consistent that for every $\beta < \omega^2$, every sequence $\la S_{\alpha+1} \mid \beta \leq \alpha < \omega^2\ra$ of stationary sets $S_{\alpha+1} \subseteq \aleph_{\alpha+1} \cap \cof(\leq\aleph_\beta)$ is mutually stationary?
\end{question}

\begin{question}
Can the Rudin-Keisler capturing machinery work for uncountable sequences of stationary sets?  
\end{question}

\subsection*{Acknowledgements}
The authors would like to thank Yair Hayut and Menachem Magidor for valuable comments and suggestions regarding this work.

\bibliographystyle{plain}
\bibliography{references}
\end{document}